\newcommand{\interior}[1]{\ensuremath{{\rm int} #1}}
\newcommand{\intn}[2]{\ensuremath{\{  #1  ,\ldots,  #2 \}}}
\def\grint{\mathrm{G}}
\def\grintfalc{\mathcal{G}}
\def\R{\mathbb{R}}
\def\Q{\mathbb{Q}}
\def\N{\mathbb{N}}
\def\Z{\mathbb{Z}}
\def\T{\mathbb{T}}
\def\cerc{\mathbb{S}}
\def\Fcal{\mathcal{F}}
\def\Id{\mathrm{Id}}
\def\ind{\mathds{1}}
\def\hau{\mathcal{H}}
\def\leb{\mathcal{L}}
\def\S{\mathcal{S}}
\def\netm{\mathcal{M}}
\newcommand{\diam}[1]{\ensuremath{| #1 |}}
\newcommand{\gene}[1]{\ensuremath{\langle #1 \rangle}}
\def\eps{\varepsilon}
\def\ph{\varphi}
\def\tor{\mathcal{T}}
\def\gauge{\mathfrak{D}}
\def\dist{\mathrm{d}}
\def\dd{\mathrm{d}}
\theoremstyle{plain}
\newtheorem{thm}{Theorem}
\newtheorem{prp}{Proposition}
\newtheorem{lem}[prp]{Lemma}
\newtheorem{cor}[prp]{Corollary}
\theoremstyle{definition}
\newtheorem{df}{Definition}
\newtheorem*{nota}{Notation}
\theoremstyle{remark}
\newtheorem{rem}{Remark}
\title[Large intersection properties]{Large intersection properties in Diophantine approximation and dynamical systems}
\author{Arnaud Durand}
\address{California Institute of Technology, 1200 E. California Blvd. -- MC 217-50, Pasadena, CA 91125, USA.}
\email{durand@acm.caltech.edu}
\begin{document}

\begin{abstract}
We investigate the large intersection properties of the set of points that are approximated at a certain rate by a family of affine subspaces. We then apply our results to various sets arising in the metric theory of Diophantine approximation, in the study of the homeomorphisms of the circle and in the perturbation theory for Hamiltonian systems.
\end{abstract}

\maketitle

\section{Introduction}\label{introduction}

The classical metric theory of Diophantine approximation is concerned with the description of the size properties of various sets which are typically of the form
\begin{equation}\label{defFxiri}
\Fcal_{(x_{i},r_{i})_{i\in I}}=\left\{ x\in\R^d \:\bigl|\: \|x-x_{i}\|<r_{i}\text{ for infinitely many }i\in I \right\},
\end{equation}
where $(x_{i},r_{i})_{i\in I}$ is a family of elements of $\R^d\times (0,\infty)$ indexed by some denumerable set $I$. As an illustration, let us consider one of the simplest examples of sets of the form~(\ref{defFxiri}) arising in Diophantine approximation, namely, the set
\begin{equation}\label{defKphi}
K_{\phi}=\left\{ x\in\R \:\Biggl|\: \left|x-\frac{p}{q}\right|<\phi(q)\mbox{ for infinitely many }(p,q)\in\Z\times\N \right\}
\end{equation}
formed by the reals that are $\phi$-approximable by rationals, where $\phi=(\phi(q))_{q\geq 1}$ is a nonincreasing sequence of positive real numbers converging to zero. A first description of the size properties of $K_{\phi}$ was given by Khintchine~\cite{Khintchine:1926uq}, who established that this set has full (resp.~zero) Lebesgue measure in $\R^d$ if $\sum_{q} \phi(q)q=\infty$ (resp. $<\infty$). In order to refine this description, Jarn\'ik~\cite{Jarnik:1931qf} then determined the value of the Hausdorff $g$-measure (see Section~\ref{largeint} for the definition) of $K_{\phi}$ for any {\em gauge function} $g$ in the set $\gauge_{1}$ defined as follows.

\begin{nota}
For any integer $d\geq 1$, let $\gauge_{d}$ be the set of all functions which vanish at zero, are continuous and nondecreasing on $[0,\eps]$ and are such that $r\mapsto h(r)/r^d$ is positive and nonincreasing on $(0,\eps]$, for some $\eps>0$. Moreover, for any $g,h\in\gauge_{d}$, let us write $g\prec h$ if $g/h$ monotonically tends to infinity at zero.
\end{nota}

The result of Jarn\'ik, recently improved by V.~Beresnevich, D.~Dickinson and S.~Velani~\cite{Beresnevich:2006ve}, is the following: for every gauge function $g\in\gauge_{1}$ such that $g\prec\Id$ (where $\Id$ stands for the identity function), the set $K_{\phi}$ has infinite (resp. zero) Hausdorff $g$-measure if $\sum_{q} g(\phi(q))q=\infty$ (resp. $<\infty$). On top of that, we established in~\cite{Durand:2007uq} that the set $K_{\phi}$ enjoys a remarkable property originally discovered by K.~Falconer~\cite{Falconer:1994hx}, viz., it is a {\em set with large intersection}. More precisely, for any gauge function $g\in\gauge_{1}$ enjoying $\sum_{q} g(\phi(q))q=\infty$, the set $K_{\phi}$ belongs to a certain class $\grint^g(\R)$ that we introduced in~\cite{Durand:2007uq} in order to generalize the original classes of sets with large intersection of Falconer. The class $\grint^g(\R)$ is closed under countable intersections and each of its members has infinite Hausdorff $\overline{g}$-measure in every nonempty open subset of $\R$, for every gauge function $\overline{g}\in\gauge_{1}$ enjoying $\overline{g}\prec g$. In particular, the set $K_{\phi}$ is locally everywhere of the same size, in the sense that for any gauge function $g\in\gauge_{1}$, the value of the Hausdorff $g$-measure of $K_{\phi}\cap V$ does not depend on the choice of the nonempty open subset $V$ of $\R$. This also implies that the size properties of set $K_{\phi}$ are not altered by taking countable intersections. Indeed, the Hausdorff dimension of the intersection of countably many sets with large intersection is equal to the infimum of their Hausdorff dimensions. Note that this feature is rather counterintuitive, in view of the fact that the intersection of two subsets of $\R$ of Hausdorff dimensions $s_{1}$ and $s_{2}$ respectively is usually expected to be $s_{1}+s_{2}-1$, see~\cite[Chapter~8]{Falconer:2003oj} for precise statements. We refer to Section~\ref{largeint} for more details about the classes of sets with large intersection.

As we shall show in Section~\ref{applicDyn}, Diophantine conditions, and therefore sets resembling $K_{\phi}$, arise at various points in the theory of dynamical systems and large intersection properties are particularly convenient in that context. For example, the existence of a smooth conjugacy between an orientation preserving diffeomorphism $f$ of the circle and a rotation is related with the fact that the rotation number of $f$, denoted by $\rho(f)$, is of Diophantine type $(K,\sigma)$ for some $K,\sigma>0$, which means that $|\rho(f)-p/q|\geq K/q^{\sigma+2}$ for all $p\in\Z$ and all $q\in\N$, see Subsection~\ref{rotnum}. For every $\sigma>0$, the set $\mathfrak{L}_{\sigma}$ of all reals that are not of Diophantine type $(K,\sigma)$ for any $K>0$, and thus for which the smoothness results fail, may be written as the intersection over $j\in\N$ of the sets
\begin{equation}\label{Lsigmajintro}
\left\{ \rho\in\R \:\Biggl|\: \left|\rho-\frac{p}{q}\right|<\frac{1}{j\, q^{\sigma+2}}\text{ for infinitely many }(p,q)\in\Z\times\N \right\}.
\end{equation}
Observe that each of these sets may be obtained by choosing $\phi(q)=1/(j\,q^{\sigma+2})$ in the definition~(\ref{defKphi}) of $K_{\phi}$. Hence, it belongs to the class $\grint^g(\R)$ for any gauge function $g\in\gauge_{1}$ such that the series $\sum_{q} g(q^{-\sigma-2})q$ diverges. This class being closed under countable intersections, it necessarily contains the set $\mathfrak{L}_{\sigma}$. It follows that this set has infinite Hausdorff $g$-measure in any nonempty open subset of $\R$ for any gauge function $g\in\gauge_{1}$ such that $\sum_{q} g(q^{-\sigma-2})q=\infty$ (see the proof of Theorem~\ref{thmLsigma} for details). The classes $\grint^g(\R)$ make the proof of this result particularly straightforward, because of their stability under countable intersections and the fact that $\mathfrak{L}_{\sigma}$ is the countable intersection of the sets defined by~(\ref{Lsigmajintro}). Also, the fact that the set $\mathfrak{L}_{\sigma}$ is a set with large intersection implies that the rotation numbers for which the smoothness results fail are ``omnipresent'' in $\R$ in a very strong measure theoretic sense.

The description of the size and large intersection properties of the set $K_{\phi}$ that we briefly presented above follows from very general methods concerning the set $\Fcal_{(x_{i},r_{i})_{i\in I}}$ defined by~(\ref{defFxiri}). By covering $\Fcal_{(x_{i},r_{i})_{i\in I}}$ by an appropriate union of balls with centers $x_{i}$ and radii $r_{i}$, it is usually obvious to provide a sufficient condition on the family $(x_{i},r_{i})_{i\in I}$ to ensure that this set has Lebesgue measure zero or a sufficient condition on the family $(x_{i},r_{i})_{i\in I}$ and the gauge function $g$ to establish that the set has Hausdorff $g$-measure zero. Conversely, it is usually much more awkward to provide a sufficient condition to ensure that $\Fcal_{(x_{i},r_{i})_{i\in I}}$ has full Lebesgue measure or has infinite $g$-measure. The most recent results on that question were obtained by Beresnevich, Dickinson and Velani~\cite{Beresnevich:2006ve}, who basically solved the problem in the case where the family $(x_{i},r_{i})_{i\in I}$ leads to what they call a {\em ubiquitous system}. Moreover, Beresnevich and Velani~\cite{Beresnevich:2005vn} proved the following {\em mass transference principle}: for any gauge function $g\in\gauge_{d}$ and any nonempty open subset $V$ of $\R^d$ such that the set $\Fcal_{(x_{i},g(r_{i})^{1/d})_{i\in I}}$ has full Lebesgue measure in $V$, the set $\Fcal_{(x_{i},r_{i})_{i\in I}}$ has maximal Hausdorff $g$-measure in $V$. Thus, together with the mass transference principle, the sole knowledge of Lebesgue measure theoretic statements for a set of the form~(\ref{defFxiri}) yields a complete description of its size properties. Under the same hypotheses, we established in~\cite{Durand:2007uq} that $\Fcal_{(x_{i},r_{i})_{i\in I}}$ belongs to the class $\grint^g(V)$ of sets with large intersection in $V$ with respect to the gauge function $g$, see Section~\ref{ubiquitysec} for details. We successfully used this result to completely describe the large intersection properties of various sets of the form~(\ref{defFxiri}) arising in metric number theory~\cite{Durand:2007uq}, such as $K_{\phi}$, or coming into play in the multifractal analysis of a L\'evy process~\cite{Durand:2007fk} and a new model of random wavelet series with correlated coefficients~\cite{Durand:2007kx}.

Subsequently, Beresnevich and Velani~\cite{Beresnevich:2006lr} observed that their mass transference principle could be extended to the more general situation in which the set $\Fcal_{(x_{i},r_{i})_{i\in I}}$ is replaced by the set
\begin{equation}\label{defFPiri}
\Fcal_{(P_{i},r_{i})_{i\in I}}=\left\{ x\in\R^d \:\bigl|\: \dist(x,P_{i})<r_{i} \text{ for infinitely many } i\in I \right\},
\end{equation}
formed by the points in $\R^d$ that are at a distance less than $r_{i}$ of a given affine subspace $P_{i}$ for infinitely many indices $i\in I$. Using this observation, they investigated the size properties of the generalization of $K_{\phi}$ to the linear forms setting, thereby complementing Lebesgue measure theoretic results obtained by W.~Schmidt~\cite{Schmidt:1964vn}. In this paper, we show that, under simple assumptions bearing on the affine subspaces $P_{i}$ and the radii $r_{i}$, the set $\Fcal_{(P_{i},r_{i})_{i\in I}}$ is a set with large intersection, in the sense that it belongs to some of the aforementioned classes $\grint^g(V)$, see Section~\ref{ubiquitysec}. This way, we are able to investigate the large intersection properties of the set studied by Schmidt, Beresnevich and Velani, see Section~\ref{applicDioph}.

Our approach also enables us to describe the size and large intersection properties of various sets arising in the Kolmogorov-Arnold-Moser theory on the perturbations of a Hamiltonian system, see Subsection~\ref{applicKAM}. In particular, we prove that the set of frequencies for which the constructions involved in this theory fail is a set with large intersection. This implies that those ``problematic'' frequencies are omnipresent in a strong measure theoretic sense. As in the study of the homeomorphisms of the circle, the fact that the classes $\grint^g(V)$ are closed under countable intersections is particularly convenient in that context.

The paper is organized as follows. In Section~\ref{largeint}, we recall the definition of Hausdorff measures and we give a brief overview of the classes of sets with large intersection introduced in~\cite{Durand:2007uq}. We present in Section~\ref{ubiquitysec} the main result of the paper, according to which the set $\Fcal_{(P_{i},r_{i})_{i\in I}}$ is a set with large intersection. In Section~\ref{applicDioph}, we then apply our results to the study of the large intersection properties of the set arising in the linear forms setting in Diophantine approximation. Applications to the theory of dynamical systems are discussed in Section~\ref{applicDyn}. Specifically, we describe the size and large intersection properties of various sets appearing in the study of the perturbations of Hamiltonian systems and the homeomorphisms of the circle. Lastly, the proofs of the main results of the paper are given in Section~\ref{proofubiquity2} and~\ref{proofthmRnnu}.

\section{Hausdorff measures and large intersection properties}\label{largeint}

Before discussing large intersection properties, let us recall some definitions and basic results about Hausdorff measures. Let $\gauge$ be the set of all nondecreasing functions $g$ defined on $[0,\eps]$ for some $\eps>0$ and such that $\lim_{0^+} g=g(0)=0$. For any gauge function $g\in\gauge$, the Hausdorff $g$-measure of a set $F\subseteq\R^d$ is defined by
\[
\hau^g(F)=\lim_{\delta\downarrow 0}\uparrow\hau^g_{\delta}(F) \qquad\text{with}\qquad \hau^g_{\delta}(F)=\inf_{F\subseteq\bigcup_{p} U_{p}\atop\diam{U_{p}}<\delta} \sum_{p=1}^\infty g(\diam{U_{p}}).
\]
The infimum is taken over all sequences $(U_{p})_{p\geq 1}$ of sets with $F\subseteq\bigcup_{p} U_{p}$ and $\diam{U_{p}}<\delta$ for all $p\geq 1$, where $\diam{\cdot}$ denotes diameter. Note that $\hau^g$ is a Borel 
measure on $\R^d$, see~\cite{Rogers:1970wb}. Actually, in view of the following result of~\cite{Durand:2007uq} and given that the sets that we study hereunder have Hausdorff measure either zero or infinity (see Sections~\ref{applicDioph} and~\ref{applicDyn}), we shall restrict our attention to gauges in the set $\gauge_{d}$ defined in Section~\ref{introduction}.

\begin{prp}\label{prpgauged}
For every gauge function $g\in\gauge$, the function
\[
g_{d}:r\mapsto r^d\inf_{\rho\in (0,r]}\frac{g(\rho)}{\rho^d}.
\]
either belongs to $\gauge_{d}$ or is equal to zero near zero. Moreover, there is a real number $\kappa\geq 1$ such that for every $g\in\gauge$ and every $F\subseteq\R^d$,
\[
\hau^{g_{d}}(F)\leq\hau^g(F)\leq\kappa\,\hau^{g_{d}}(F).
\]
\end{prp}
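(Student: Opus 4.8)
The plan is to introduce $\psi(r)=\inf_{\rho\in(0,r]}g(\rho)/\rho^d$, so that $g_{d}(r)=r^d\psi(r)$, and to read off the properties of $g_{d}$ from those of $\psi$. Since $g$ is nondecreasing with $g(0)=0$, we have $0\le\psi(r)\le g(r)/r^d<\infty$; moreover $\psi$ is nonincreasing, because its defining infimum is taken over the larger set $(0,r]$ as $r$ grows. Two consequences are immediate: $g_{d}\le g$ pointwise on the interval where $g$ is defined, and $r\mapsto g_{d}(r)/r^d=\psi(r)$ is nonincreasing, which is one of the conditions defining $\gauge_{d}$.

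The next and main structural step is to show that $g_{d}$ is nondecreasing. I would fix $0<r_{1}<r_{2}$ and prove that $g(\rho)/\rho^d\ge g_{d}(r_{1})/r_{2}^d$ for every $\rho\in(0,r_{2}]$. If $\rho\le r_{1}$ this is just $g(\rho)/\rho^d\ge\psi(r_{1})=g_{d}(r_{1})/r_{1}^d\ge g_{d}(r_{1})/r_{2}^d$. If $r_{1}<\rho\le r_{2}$ then, using that $g$ is nondecreasing and that $g_{d}(r_{1})\le g(r_{1})$, one gets $g(\rho)\ge g(r_{1})\ge g_{d}(r_{1})$, whence $g(\rho)/\rho^d\ge g_{d}(r_{1})/\rho^d\ge g_{d}(r_{1})/r_{2}^d$. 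Taking the infimum over $\rho\in(0,r_{2}]$ gives $\psi(r_{2})\ge g_{d}(r_{1})/r_{2}^d$, that is, $g_{d}(r_{2})\ge g_{d}(r_{1})$. A function that is nondecreasing and whose quotient by $r^d$ is nonincreasing is automatically continuous: at any interior point $r$, monotonicity of $g_{d}$ gives $g_{d}(r^{-})\le g_{d}(r)\le g_{d}(r^{+})$, while monotonicity of $r\mapsto g_{d}(r)/r^d$, together with continuity of $r\mapsto r^d$, gives the reverse inequalities, forcing equality. Since in addition $0\le g_{d}\le g$ and $g\to0$ at $0$, extending $g_{d}$ by $g_{d}(0)=0$ produces a continuous function on $[0,\eps]$ vanishing at $0$. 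Finally, as $\psi$ is nonincreasing and nonnegative, either $\psi\equiv0$ near $0$, in which case $g_{d}=0$ near $0$, or $\psi(r_{0})>0$ for some $r_{0}>0$, in which case $\psi>0$ on $(0,r_{0}]$ and all the requirements for $g_{d}\in\gauge_{d}$ hold.

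It remains to prove the two-sided measure comparison. The inequality $\hau^{g_{d}}(F)\le\hau^g(F)$ is immediate from $g_{d}\le g$, applied term by term to any cover. For the reverse inequality the idea is to refine covers. Fix $\delta>0$, a set $F\subseteq\R^d$ and a number $\eta>0$, and take a cover $(U_{p})_{p}$ of $F$ with $\diam{U_{p}}<\delta$ and $\sum_{p}g_{d}(\diam{U_{p}})\le\hau^{g_{d}}_{\delta}(F)+\eta$. Let $c_{d}$ be a constant, depending only on $d$, such that any ball of radius $r$ in $\R^d$ can be covered by at most $c_{d}(r/\rho)^d$ sets of diameter at most $\rho$ whenever $0<\rho\le r$. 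Writing $r_{p}=\diam{U_{p}}$, choose $\rho_{p}\in(0,r_{p}]$ with $g(\rho_{p})/\rho_{p}^d\le 2\psi(r_{p})$ when $\psi(r_{p})>0$, and with $c_{d}\,r_{p}^d\,g(\rho_{p})/\rho_{p}^d\le\eta\,2^{-p}$ when $\psi(r_{p})=0$ --- both possible since $\psi(r_{p})$ is an infimum. Covering each nonempty $U_{p}$, which lies in a ball of radius $r_{p}$, by at most $c_{d}(r_{p}/\rho_{p})^d$ sets of diameter at most $\rho_{p}<\delta$, we obtain a $\delta$-cover of $F$ whose $g$-mass is at most $\sum_{p}c_{d}(r_{p}/\rho_{p})^d\cdot 2\rho_{p}^d\psi(r_{p})+\eta=2c_{d}\sum_{p}g_{d}(r_{p})+\eta\le 2c_{d}\hau^{g_{d}}_{\delta}(F)+(2c_{d}+1)\eta$. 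Hence $\hau^{g}_{\delta}(F)\le 2c_{d}\hau^{g_{d}}_{\delta}(F)+(2c_{d}+1)\eta$, and letting first $\eta\downarrow0$ and then $\delta\downarrow0$ gives $\hau^{g}(F)\le\kappa\,\hau^{g_{d}}(F)$ with $\kappa=\max\{1,2c_{d}\}$, independent of $g$ and $F$. The main obstacles are the monotonicity of $g_{d}$ and the deduction of its continuity --- both short, once the monotonicity of $g$ is used at the right places --- whereas the refinement-of-covers argument is routine given the covering number of a ball by small sets.
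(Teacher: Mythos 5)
Your argument is correct and complete: the monotonicity of $g_{d}$ via the two-case bound $g(\rho)/\rho^d\geq g_{d}(r_{1})/r_{2}^d$, the automatic continuity from the pair of monotonicities, the dichotomy coming from $\psi$ being nonincreasing, and the refinement-of-covers comparison with $\kappa=2c_{d}$ all hold up; the only unaddressed point is the trivial one of covering sets with $\diam{U_{p}}=0$, which can simply be kept unchanged since they contribute $g(0)=0$. Note that the paper itself does not prove this proposition but quotes it from~\cite{Durand:2007uq}, so there is no in-text proof to compare with; your route is the natural one and is in the spirit of the argument given there.
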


Observe that for $g\in\gauge_{d}$, if $g\prec\Id^d$, every nonempty open subset of $\R^d$ has infinite Hausdorff $g$-measure. Otherwise, $g(r)={\rm O}(r^d)$ as $r$ goes to zero, so that $\hau^g$ is finite on every compact subset of $\R^d$. Since it is a translation invariant Borel measure, it coincides up to a multiplicative constant with the Lebesgue measure on the Borel subsets of $\R^d$.

Lastly, recall that the Hausdorff dimension of a nonempty set $F\subseteq\R^d$ is defined with the help of the gauge functions $\Id^s$ by
\[
\dim F=\sup\{s\in (0,d) \:|\: \hau^{\Id^s}(F)=\infty\}=\inf\{s\in (0,d) \:|\: \hau^{\Id^s}(F)=0\}
\]
with the convention that $\sup\emptyset=0$ and $\inf\emptyset=d$, see~\cite{Falconer:2003oj}.

\medskip

In~\cite{Durand:2007uq}, we introduced new classes of sets with large intersection which generalize the classes $\grintfalc^s(\R^d)$ originally considered by Falconer~\cite{Falconer:1994hx}. In the remainder of this section, we give a brief overview of these new classes and we refer to~\cite{Durand:2007uq} for a fuller exposition. Our classes are associated with the functions that belong to the set $\gauge_{d}$ defined in Section~\ref{introduction} and are obtained in the following manner, with the help of outer net measures. Given an integer $c\geq 2$, let $\Lambda_{c}$ be the collection of the $c$-adic cubes of $\R^d$, that is, the sets of the form $\lambda=c^{-j}( k+[0,1)^d )$ for $j\in\Z$ and $k\in\Z^d$. The integer $j$ is the generation of $\lambda$, denoted by $\gene{\lambda}_{c}$. For any $g\in\gauge_{d}$, the set of all $\eps\in (0,1]$ such that $g$ is nondecreasing on $[0,\eps]$ and $r\mapsto g(r)/r^d$ is nonincreasing on $(0,\eps]$ is nonempty. Let $\eps_{g}$ denote its supremum. The outer net measure associated with $g\in\gauge_{d}$ is defined by
\begin{equation}\label{defnetmh}
\forall F\subseteq\R^d \qquad \netm^g_{\infty}(F) = \inf_{(\lambda_{p})_{p\geq 1}} \sum_{p=1}^\infty g(\diam{\lambda_{p}}),
\end{equation}
where the infimum is taken over all sequences $(\lambda_{p})_{p\geq 1}$ with $F\subseteq\bigcup_{p}\lambda_{p}$, where each $\lambda_{p}$ is either a cube in $\Lambda_{c}$ with diameter less than $\eps_{g}$ or the empty set. As shown by~\cite[Theorem~49]{Rogers:1970wb}, the outer measure $\netm^g_{\infty}$ is in some way related with the Hausdorff measure $\hau^g$. In particular, if a subset $F$ of $\R^d$ enjoys $\netm^g_{\infty}(F)>0$, then $\hau^g(F)>0$. The classes of sets with large intersection introduced in~\cite{Durand:2007uq} are now defined as follows. Recall that a $G_{\delta}$-set is one that may be expressed as a countable intersection of open sets.

\begin{df}
Let $g\in\gauge_{d}$ and let $V$ be a nonempty open subset of $\R^d$. The class $\grint^g(V)$ of subsets of $\R^d$ with large intersection in $V$ with respect to $g$ is the collection of all $G_{\delta}$-subsets $F$ of $
\R^d$ such that $\netm^{\overline{g}}_{\infty}(F\cap U)=\netm^{\overline{g}}_{\infty}(U)$ for every $\overline{g}\in\gauge_{d}$ enjoying $\overline{g}\prec g$ and every open set $U\subseteq V$.
\end{df}

\begin{rem}
The classes $\grint^g(V)$ depend on the choice of neither the integer $c$ nor the norm $\R^d$ is endowed with, even if they affect the construction of $\netm^{\overline{g}}_{\infty}$ for any $\overline{g}\in\gauge_{d}$ with $\overline{g}\prec g$, see~\cite[Proposition~13]{Durand:2007uq}.
\end{rem}

The next proposition gives the basic properties of the classes $\grint^g(V)$ that follow directly from their definition.

\begin{prp}\label{grintpropbase}
Let $g\in\gauge_{d}$ and let $V$ be a nonempty open subset of $\R^d$. Then
\renewcommand{\theenumi}{\alph{enumi}}
\begin{enumerate}
\item $\grint^{g_{1}}(V) \supseteq \grint^{g_{2}}(V)$ for any $g_{1},g_{2}\in\gauge_{d}$ with $g_{1}\prec g_{2}$;
\item $\grint^g(V_{1}) \supseteq \grint^g(V_{2})$ for any nonempty open sets $V_{1},V_{2}\subseteq\R^d$ with $V_{1}\subseteq V_{2}$;
\item $\grint^g(V)=\bigcap_{\overline{g}}\grint^{\overline{g}}(V)$ where $\overline{g}\in\gauge_{d}$ enjoys $\overline{g}\prec g$;
\item $\grint^g(V)=\bigcap_{U}\grint^g(U)$ where $U$ is a nonempty open subset of $V$;
\item\label{propbaseextension} every $G_{\delta}$-set which contains a set of $\grint^g(V)$ also belongs to $\grint^g(V)$;
\item $F\cap U\in\grint^g(U)$ for every $F\in\grint^g(V)$ and every nonempty open set $U\subseteq V$.
\end{enumerate}
\renewcommand{\theenumi}{\roman{enumi}}
\end{prp}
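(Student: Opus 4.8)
The plan is to read off all six items directly from the definition of $\grint^g(V)$, using three elementary facts throughout. First, each $\netm^{\overline g}_\infty$ is an outer measure, hence monotone. Second, since $F\cap U\subseteq U$, the bound $\netm^{\overline g}_\infty(F\cap U)\le\netm^{\overline g}_\infty(U)$ is automatic, so membership of a $G_{\delta}$-set $F$ in $\grint^g(V)$ is equivalent to the reverse bound $\netm^{\overline g}_\infty(F\cap U)\ge\netm^{\overline g}_\infty(U)$ for all $\overline g\in\gauge_{d}$ with $\overline g\prec g$ and all open $U\subseteq V$. Third, $\prec$ is transitive: if $g_{1}\prec g_{2}\prec g_{3}$ then $g_{1}/g_{3}=(g_{1}/g_{2})(g_{2}/g_{3})$ is a product of two positive functions each tending monotonically to infinity at zero, and so does this product. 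I will also use that open sets and finite or countable intersections of $G_{\delta}$-sets are $G_{\delta}$-sets.

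Granting this, items (a), (b), (e), (f) should come out immediately. For (a): if $F\in\grint^{g_{2}}(V)$ and $g_{1}\prec g_{2}$, then any $\overline g\prec g_{1}$ also satisfies $\overline g\prec g_{2}$ by transitivity, so the equalities defining $\grint^{g_{2}}(V)$ include all those needed for $\grint^{g_{1}}(V)$, and $F$ is a $G_{\delta}$-set. For (b): if $F\in\grint^g(V_{2})$ and $V_{1}\subseteq V_{2}$, every open $U\subseteq V_{1}$ is open in $V_{2}$, so the relevant equalities persist. For (e): if $F\subseteq F'$ with $F\in\grint^g(V)$ and $F'$ a $G_{\delta}$-set, then $\netm^{\overline g}_\infty(F'\cap U)\ge\netm^{\overline g}_\infty(F\cap U)=\netm^{\overline g}_\infty(U)$ by monotonicity, and the automatic reverse bound finishes it. For (f): $F\cap U$ is a $G_{\delta}$-set (intersection of the $G_{\delta}$-set $F$ with the open set $U$), and for open $W\subseteq U$ one has $(F\cap U)\cap W=F\cap W$ with $W\subseteq V$, so $\netm^{\overline g}_\infty((F\cap U)\cap W)=\netm^{\overline g}_\infty(W)$ for all $\overline g\prec g$.

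That leaves the two identities (c) and (d). Item (d) requires nothing new: $V$ is a nonempty open subset of itself, so $\bigcap_{U}\grint^g(U)\subseteq\grint^g(V)$ at once, and the reverse inclusion is (b). For (c), the inclusion $\grint^g(V)\subseteq\bigcap_{\overline g}\grint^{\overline g}(V)$ is exactly (a) applied with $g_{1}=\overline g$ and $g_{2}=g$. The reverse inclusion is the one step that does not follow formally from the definitions, and I expect it to be the crux: if $F$ lies in every $\grint^{\overline g}(V)$ with $\overline g\prec g$, then membership in $\grint^{\overline g}(V)$ only hands us the equalities $\netm^{\widehat g}_\infty(F\cap U)=\netm^{\widehat g}_\infty(U)$ for $\widehat g$ strictly below $\overline g$, whereas to conclude $F\in\grint^g(V)$ we must establish these equalities for $\overline g$ itself. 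My plan is to interpolate: for any $\overline g\prec g$ there is $\widetilde g\in\gauge_{d}$ with $\overline g\prec\widetilde g\prec g$; concretely one can take $\widetilde g=\sqrt{\overline g\,g}$, since then $\widetilde g/\overline g=g/\widetilde g=\sqrt{g/\overline g}$ tends monotonically to infinity at zero and $r\mapsto\widetilde g(r)/r^d=\sqrt{(\overline g(r)/r^d)(g(r)/r^d)}$ is positive and nonincreasing, so $\widetilde g\in\gauge_{d}$. Since $\widetilde g\prec g$, the hypothesis gives $F\in\grint^{\widetilde g}(V)$, and since $\overline g\prec\widetilde g$ the definition of $\grint^{\widetilde g}(V)$ yields $\netm^{\overline g}_\infty(F\cap U)=\netm^{\overline g}_\infty(U)$ for all open $U\subseteq V$, as wanted. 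Apart from this interpolation (which is also recorded in \cite{Durand:2007uq}), the proposition is pure bookkeeping with the definition, so I anticipate no further obstacles.
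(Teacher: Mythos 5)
Your proof is correct and takes essentially the same route as the paper, which offers no separate argument for this proposition but simply asserts that the properties follow directly from the definition; your bookkeeping (monotonicity of the net measures, transitivity of $\prec$, and the geometric-mean interpolation $\widetilde{g}=\sqrt{\overline{g}\,g}$ for the nontrivial inclusion in (c)) is exactly the intended reasoning. One cosmetic slip: under the paper's convention that $a\prec b$ means $a/b$ tends monotonically to infinity at zero, the ratios to verify in (c) are $\overline{g}/\widetilde{g}=\widetilde{g}/g=\sqrt{\overline{g}/g}$ --- the reciprocals of those you wrote --- and these do tend monotonically to infinity, so the interpolation and hence the argument stand.
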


The following result, which combines Theorem~1 and Proposition~11 in~\cite{Durand:2007uq}, provides the main nontrivial properties of the classes $\grint^g(V)$. These properties show in particular that a set with large intersection in some nonempty open set $V$ is to be thought of as large and omnipresent in $V$, in a measure theoretic sense.

\begin{thm}\label{grintstable}
Let $g\in\gauge_{d}$ and let $V$ be a nonempty open subset of $\R^d$. Then,
\renewcommand{\theenumi}{\alph{enumi}}
\begin{enumerate}
\item\label{intersect} the class $\grint^g(V)$ is closed under countable intersections;
\item\label{bilip} the set $f^{-1}(F)$ belongs to $\grint^g(V)$ for every bi-Lipschitz mapping $f:V\to\R^d$ and every set $F\in\grint^g(f(V))$;
\item\label{relsizelargeint} every set $F\in\grint^g(V)$ enjoys $\hau^{\overline{g}}(F)=\infty$ for every $\overline{g}\in\gauge_{d}$ with $\overline{g}\prec g$ and in particular
\[
\dim F\geq s_{g}=\sup\left\{s\in (0,d) \:|\: \Id^s\prec g \right\};
\]
\item\label{fullleb} every $G_{\delta}$-subset of $\R^d$ with full Lebesgue measure in $V$ is in the class $\grint^g(V)$.
\end{enumerate}
\renewcommand{\theenumi}{\roman{enumi}}
\end{thm}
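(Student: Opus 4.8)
\noindent My plan is to establish each of the four assertions \ref{intersect}--\ref{fullleb} directly from the definition of $\grint^g(V)$ in terms of the outer net measures $\netm^{\overline g}_{\infty}$. Several remarks reduce the work. A countable intersection of $G_{\delta}$-sets is $G_{\delta}$, and the preimage of a $G_{\delta}$-set under a continuous map is $G_{\delta}$; so in \ref{intersect} and \ref{bilip} the $G_{\delta}$ requirement is automatic, in \ref{fullleb} it is part of the hypothesis, and in every case only the identity $\netm^{\overline g}_{\infty}(F\cap U)=\netm^{\overline g}_{\infty}(U)$, for $\overline g\in\gauge_{d}$ with $\overline g\prec g$ and $U\subseteq V$ open, has content. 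Using the outer-measure properties of the $\netm^{\overline g}_{\infty}$ (see~\cite{Rogers:1970wb}) this may be checked when $U$ is a $c$-adic cube $\lambda\subseteq V$ of diameter less than $\eps_{\overline g}$; since $\netm^{\overline g}_{\infty}(\lambda)=\overline g(\diam{\lambda})$ (subdividing a cube cannot lower the associated $\overline g$-sum, which is exactly the monotonicity of $r\mapsto\overline g(r)/r^{d}$; the same computation gives $\netm^{\overline g}_{\infty}(\interior{\lambda})=\overline g(\diam{\lambda})$), the identity becomes $\netm^{\overline g}_{\infty}(F\cap\lambda)=\overline g(\diam{\lambda})$ and only ``$\geq$'' is at stake. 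Two tools are used throughout: between any $\overline g\prec g$ lies an intermediate gauge (for instance $\sqrt{\overline g\,g}\in\gauge_{d}$), and the mass distribution principle — if a finite Borel measure $\nu$ satisfies $\nu(\lambda)\leq\overline g(\diam{\lambda})$ for every $c$-adic cube $\lambda$ then $\netm^{\overline g}_{\infty}(E)\geq\nu(E)$ — together with its converse for open sets, namely that the value $\netm^{\overline g}_{\infty}(W)$ of an open set $W$ is attained by such a measure, a min-cut/max-flow fact for the $c$-adic net provable by finite approximation.

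The heart of the matter is \ref{intersect}. Fix $F_{n}\in\grint^g(V)$, a gauge $\overline g\prec g$, and a small $c$-adic cube $\lambda\subseteq V$; I must show $\netm^{\overline g}_{\infty}\bigl(\bigcap_{n}F_{n}\cap\lambda\bigr)\geq\overline g(\diam{\lambda})$. The pivotal remark is that whenever an open set $G$ contains some $F_{n}$, one has $\netm^{\overline g}_{\infty}(G\cap\interior{\mu})=\overline g(\diam{\mu})$ for every $c$-adic cube $\mu\subseteq V$, by squeezing between $\netm^{\overline g}_{\infty}(F_{n}\cap\interior{\mu})=\overline g(\diam{\mu})$ and $\netm^{\overline g}_{\infty}(\interior{\mu})=\overline g(\diam{\mu})$; hence there is a Borel measure on $G\cap\interior{\mu}$ of total mass $\overline g(\diam{\mu})$ assigning at most $\overline g(\diam{\delta})$ to every $c$-adic cube $\delta$. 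Writing $F_{n}=\bigcap_{j}G_{n,j}$ with $G_{n,j}$ open and enumerating all $G_{n,j}$ as $(H_{m})_{m\geq1}$, I would build recursively finite families $\mathcal{C}_{0}=\{\lambda\},\mathcal{C}_{1},\mathcal{C}_{2},\dots$ of pairwise disjoint $c$-adic cubes, with $\mathcal{C}_{m}$ of common generation $\ell_{m}$ and $\ell_{0}<\ell_{1}<\cdots$: given $\mu\in\mathcal{C}_{m-1}$, use the measure above for $H_{m}\cap\interior{\mu}$ to pick finitely many $c$-adic cubes with closures inside $H_{m}\cap\interior{\mu}$ carrying at least the fraction $1-\eps_{m}$ of its mass, and let $\mathcal{C}_{m}$ collect these over all $\mu\in\mathcal{C}_{m-1}$, where $\prod_{m}(1-\eps_{m})>0$. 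Then $C=\bigcap_{m}\bigcup_{\mu\in\mathcal{C}_{m}}\adherence{\mu}$ is a nonempty compact subset of $\lambda\cap\bigcap_{m}H_{m}=\lambda\cap\bigcap_{n}F_{n}$. Distributing a total mass $\overline g(\diam{\lambda})$ over $C$ from the top down — splitting the mass at $\mu\in\mathcal{C}_{m-1}$ among its selected descendants in $\mathcal{C}_{m}$ proportionally to the selecting measure — yields a Borel measure $\nu$ on $C$ with $\nu(C)=\overline g(\diam{\lambda})$ and $\nu(\delta)\leq\bigl(\prod_{m}(1-\eps_{m})\bigr)^{-1}\overline g(\diam{\delta})$ for \emph{every} $c$-adic cube $\delta$. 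The mass distribution principle then gives $\netm^{\overline g}_{\infty}(C)\geq\bigl(\prod_{m}(1-\eps_{m})\bigr)\overline g(\diam{\lambda})$, and since the $\eps_{m}$ are at our disposal this yields $\netm^{\overline g}_{\infty}\bigl(\bigcap_{n}F_{n}\cap\lambda\bigr)\geq\overline g(\diam{\lambda})$. The delicate point, I expect, is precisely the bound on $\nu(\delta)$ for cubes $\delta$ not occurring in the tree: this forces the generations $\ell_{m}$ to grow fast enough and makes essential the use, at each step, of the measure dominated by $\netm^{\overline g}_{\infty}$ on $H_{m}\cap\interior{\mu}$ rather than of a naive cube decomposition.

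Assertion \ref{relsizelargeint} is then soft. Given $\overline g\prec g$, interpolate a gauge $\overline{\overline g}$ with $\overline g\prec\overline{\overline g}\prec g$. Since $\overline{\overline g}\prec g$, every nonempty open $U\subseteq V$ satisfies $\netm^{\overline{\overline g}}_{\infty}(F\cap U)=\netm^{\overline{\overline g}}_{\infty}(U)>0$ (the latter because $U$ contains a $c$-adic cube), whence $\hau^{\overline{\overline g}}(F)>0$ by~\cite[Theorem~49]{Rogers:1970wb}. A routine covering estimate shows that $\hau^{\overline{\overline g}}(F)>0$ and $\overline g\prec\overline{\overline g}$ force $\hau^{\overline g}(F)=\infty$: if $\hau^{\overline g}(F)<\infty$, then for each $\delta>0$ there is a cover of $F$ by sets of diameter $<\delta$ with $\sum\overline g(\cdot)$ uniformly bounded, so $\hau^{\overline{\overline g}}(F)\leq\bigl(\sup_{r<\delta}\overline{\overline g}(r)/\overline g(r)\bigr)\hau^{\overline g}(F)\to0$ as $\delta\downarrow0$, a contradiction. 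Thus $\hau^{\overline g}(F)=\infty$ for every $\overline g\prec g$; taking $\overline g=\Id^{s}$ with $s<s_{g}$ and letting $s\uparrow s_{g}$ gives $\dim F\geq s_{g}$.

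For \ref{bilip}, a bi-Lipschitz $f\colon V\to\R^{d}$ with constant $L$ is a homeomorphism onto the open set $f(V)$, so $f^{-1}(F)$ is $G_{\delta}$, and since $f$ distorts diameters by factors in $[L^{-1},L]$, the net measures built from $\overline h$ and from $r\mapsto\overline h(Lr)$ are comparable up to constants depending only on $L$ and $d$. Transporting through $f$ the identity satisfied by $F\in\grint^{g}(f(V))$ therefore yields only a uniform fractional bound $\netm^{\overline g}_{\infty}(f^{-1}(F)\cap U)\geq c\,\netm^{\overline g}_{\infty}(U)$ with a fixed $c\in(0,1)$, for all $\overline g\prec g$ and $U\subseteq V$ open; this is upgraded to the exact identity by exactly the Cantor scheme of \ref{intersect}, now run with the single set $f^{-1}(F)$ and using that it is $c$-full for $\netm^{\overline g}_{\infty}$ in every subcube. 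Finally, for \ref{fullleb}, by the reduction above it suffices to prove $\netm^{\overline g}_{\infty}(F\cap\lambda)=\overline g(\diam{\lambda})$ for a $G_{\delta}$-set $F$ of full Lebesgue measure in $V$ and a $c$-adic cube $\lambda\subseteq V$. A near-optimal $c$-adic cover $(\mu_{p})_{p}$ of $F\cap\lambda$ may be taken to consist of pairwise disjoint cubes contained in $\lambda$ — a cube containing $\lambda$ would already make the $\overline g$-sum at least $\overline g(\diam{\lambda})$ — and these cubes cover a subset of $\lambda$ of full Lebesgue measure, so $\sum_{p}\diam{\mu_{p}}^{d}\geq\diam{\lambda}^{d}$; as $r\mapsto\overline g(r)/r^{d}$ is nonincreasing, $\sum_{p}\overline g(\diam{\mu_{p}})\geq\frac{\overline g(\diam{\lambda})}{\diam{\lambda}^{d}}\sum_{p}\diam{\mu_{p}}^{d}\geq\overline g(\diam{\lambda})$, which is the desired inequality.
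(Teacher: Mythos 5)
You are reconstructing machinery the paper itself does not reproduce: Theorem~\ref{grintstable} is imported from~\cite{Durand:2007uq} (Theorem~1 and Proposition~11 there), so the only fair comparison is with that source, and your plan is essentially the Falconer--Durand net-measure argument. Most of it is sound. The reduction to small $c$-adic cubes, the computation $\netm^{\overline g}_{\infty}(\lambda)=\netm^{\overline g}_{\infty}(\interior{\lambda})=\overline g(\diam{\lambda})$, the Cantor scheme for \ref{intersect} (whose bookkeeping does work: since each selecting measure is dominated by $\overline g$ on \emph{all} $c$-adic cubes, the proportional redistribution gives $\nu(\delta)\le\bigl(\prod_m(1-\eps_m)\bigr)^{-1}\overline g(\diam{\delta})$ for every cube $\delta$, with no further growth condition on the generations), the interpolation argument for \ref{relsizelargeint} and the volume argument for \ref{fullleb} are all correct. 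One assertion should be stated more carefully: the net value of an open set need not be exactly attained by a measure carried by that open set; what is true, and what your construction actually uses, is that for every $\eps>0$ there is a measure supported on a compact subset of the open set, dominated by $\overline g$ on $c$-adic cubes, of mass at least $(1-\eps)$ times the net value. Besides the finite (compact) max-flow/min-cut duality this needs the increasing-sets/capacitability property of net measures, which you should cite explicitly from~\cite{Rogers:1970wb} rather than fold into ``finite approximation''.

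The genuine gap is in \ref{bilip}. The bi-Lipschitz transport does yield $\netm^{\overline g}_{\infty}(f^{-1}(F)\cap U)\ge c\,\netm^{\overline g}_{\infty}(U)$ with a fixed $c\in(0,1)$, but the proposed upgrade --- rerunning ``exactly the Cantor scheme of \ref{intersect}'' on the single $c$-full set at the same gauge --- fails. In \ref{intersect} the scheme works because at every cube $\mu$ of the tree the full value $\overline g(\diam{\mu})$ is available (that is precisely why you squeezed $\netm^{\overline g}_{\infty}(H_m\cap\interior{\mu})$ between the value for $F_n$ and for $\interior{\mu}$); with only a $c$-fraction available in each cube, at each level you must either renormalize the redistributed mass upward, which inflates the domination constant by $1/c$ per level so the mass distribution principle gives nothing in the limit, or keep the domination and lose a factor $c$ of total mass per level, so the limit measure is trivial. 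Either way you recover at best the fraction $c$ you started from, never the equality $\netm^{\overline g}_{\infty}(f^{-1}(F)\cap U)=\netm^{\overline g}_{\infty}(U)$. The missing ingredient is a gauge-trading lemma of Falconer type: if $\netm^{g'}_{\infty}(E\cap U)\ge c\,\netm^{g'}_{\infty}(U)$ for all open $U\subseteq V$, then $\netm^{\overline g}_{\infty}(E\cap U)=\netm^{\overline g}_{\infty}(U)$ for every $\overline g\prec g'$; this is exactly the role played by Lemma~12 of~\cite{Durand:2007uq}, which the present paper invokes to upgrade the constant $\kappa'$ produced by its slicing lemma. Since your fraction bound holds uniformly for all gauges $\prec g$, applying that lemma with an intermediate gauge such as $\sqrt{\overline g\,g}$ (which you already know how to interpolate) finishes \ref{bilip}; but the lemma itself is a genuine covering/stopping-time argument and is absent from your proposal.
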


Using Theorem~\ref{grintstable}, it is possible to establish that $\grint^g(\R^d)$ is included in the class $\grintfalc^{s_{g}}(\R^d)$ of Falconer when $s_{g}$ is positive, see~\cite{Durand:2007uq}. To end this section, let us indicate another noteworthy consequence of Theorem~\ref{grintstable}. Let $g\in\gauge_{d}$ and let $V$ be a nonempty open subset of $\R^d$. For any sequence $(F_{n})_{n\geq 1}$ of sets in the class $\grint^h(V)$,
\[
\forall \overline{g}\in\gauge_{d} \qquad \overline{g}\prec g \quad\Longrightarrow\quad \hau^{\overline{g}}\left(\bigcap_{n=1}^\infty F_{n}\right)=\infty.
\]
Hence the Hausdorff dimension of $\bigcap_{n} F_{n}$ is at least $s_{g}$. In addition, if the dimension of $F_{n}$ is at most $s_{g}$ for some $n\geq 1$, the previous intersection has dimension $s_{g}$.

\section{Approximation by affine subspaces}\label{ubiquitysec}

Let $I$ denote a denumerable set and let $\S_{d}(I)$ be the set of all families $(x_{i},r_{i})_{i\in I}$ of elements of $\R^d\times (0,\infty)$ such that
\[
\sup_{i\in I} r_{i}<\infty \qquad\mbox{and}\qquad \forall m\in\N \quad \#\left\{ i\in I \:\bigl|\: \|x_{i}\|<m\mbox{ and }r_{i}>1/m \right\}<\infty.
\]
The set of all points in $\R^d$ that are infinitely often at a distance less than $r_{i}$ of the point $x_{i}$ is given by~(\ref{defFxiri}), that is,
\[
\Fcal_{(x_{i},r_{i})_{i\in I}}=\left\{ x\in\R^d \:\bigl|\: \|x-x_{i}\|<r_{i}\text{ for infinitely many }i\in I \right\}.
\]
Sets of this form play a central role in various areas of mathematics, such as number theory and multifractal analysis, see for instance~\cite{Beresnevich:2006ve,Durand:2007kx,Durand:2007uq,Durand:2007fk,Durand:2006uq}. Several examples arising in Diophantine approximation are mentioned in the beginning of Section~\ref{applicDioph}. In multifractal analysis, sets of the form~(\ref{defFxiri}) are obtained by considering the points at which a stochastic process, such as a L\'evy process or a random wavelet series, has at most a given H\"older exponent.

We established in~\cite{Durand:2007uq} that, under a very general assumption on the family $(x_{i},r_{i})_{i\in I}$, the set $\Fcal_{(x_{i},r_{i})_{i\in I}}$ is a set with large intersection with respect to a given gauge function $h
\in\gauge_{d}$. To be specific, Theorem~2 in~\cite{Durand:2007uq} straightforwardly implies the following result.

\begin{thm}\label{ubiquity}
Let $I$ be a denumerable set, let $(x_{i},r_{i})_{i\in I}\in\S_{d}(I)$, let $h\in\gauge_{d}$ and let $V$ be a nonempty open subset of $\R^d$. Assume that for Lebesgue-almost every $x\in V$, there exist infinitely many $i\in I
$ such that
\[
\|x-x_{i}\|<h(r_{i})^{1/d}.
\]
Then, the set $\Fcal_{(x_{i},r_{i})_{i\in I}}$ defined by~(\ref{defFxiri}) belongs to the class $\grint^h(V)$.
\end{thm}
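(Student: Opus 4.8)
The plan is to deduce the statement from the large-intersection form of the mass transference principle obtained in \cite[Theorem~2]{Durand:2007uq}, which asserts that whenever $(x_i,r_i)_{i\in I}\in\S_d(I)$, $g\in\gauge_d$, and $V$ is a nonempty open set such that the set $\Fcal_{(x_i,g(r_i)^{1/d})_{i\in I}}$ has full Lebesgue measure in $V$, the set $\Fcal_{(x_i,r_i)_{i\in I}}$ belongs to the class $\grint^g(V)$. All that remains is to check that this applies here with $g=h$, so the argument is purely a matter of recasting the hypotheses.

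First, the assumption of the theorem is merely a reformulation of the required Lebesgue measure statement. In the notation of~(\ref{defFxiri}), the set $\Fcal_{(x_i,h(r_i)^{1/d})_{i\in I}}$ consists of the points that are infinitely often at distance less than $h(r_i)^{1/d}$ from some $x_i$; saying that Lebesgue-almost every $x\in V$ has this property is exactly saying that this set contains $V$ up to a Lebesgue-null set, i.e.\ has full Lebesgue measure in $V$. Moreover $\Fcal_{(x_i,r_i)_{i\in I}}$ is a $G_\delta$-set: after enumerating $I$ as $\{i_1,i_2,\ldots\}$ one has $\Fcal_{(x_i,r_i)_{i\in I}}=\bigcap_{N\ge 1}\bigcup_{n\ge N}\{x\in\R^d:\|x-x_{i_n}\|<r_{i_n}\}$, a countable intersection of open sets, so the topological requirement in the definition of $\grint^h(V)$ is satisfied.

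Second, the minor book-keeping needed to make the two sets well defined is routine and of the same nature as what is needed merely to state \cite[Theorem~2]{Durand:2007uq}: one extends $h$ past $\eps_h$ by the constant value $h(\eps_h)$, which preserves monotonicity and makes $h(r_i)^{1/d}$ meaningful for every $i$, and one then checks that $(x_i,h(r_i)^{1/d})_{i\in I}$ again lies in $\S_d(I)$. Indeed $\sup_i h(r_i)^{1/d}\le h(\eps_h)^{1/d}<\infty$, and for each $m\in\N$ the inequality $h(r_i)^{1/d}>1/m$ forces $r_i$ to stay above a positive threshold (as $h$ is continuous with $h(0)=0$), so $\{i:\|x_i\|<m,\ h(r_i)^{1/d}>1/m\}$ is finite by the defining property of $\S_d(I)$. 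Since the indices with $r_i\ge\eps_h$ are locally finite by that same property, the choice of extension affects neither $\limsup$-type set on bounded regions, and one may use Proposition~\ref{grintpropbase}(d) to reduce to bounded $V$ if desired.

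With these verifications, \cite[Theorem~2]{Durand:2007uq} applies with $g=h$ and yields $\Fcal_{(x_i,r_i)_{i\in I}}\in\grint^h(V)$, as desired. All the substance is contained in the cited theorem; a self-contained proof would require redoing the mass transference principle in the sharper, net-measure form demanded by the definition of $\grint^h(V)$. The natural route would be to build a generalized Cantor subset of $\Fcal_{(x_i,r_i)_{i\in I}}$ by iteratively selecting, inside each ball produced at the previous stage, a large disjoint subfamily from the almost-everywhere covering by the balls $B(x_i,h(r_i)^{1/d})$ and then shrinking each selected ball to the much smaller $B(x_i,r_i)$, and then to bound the $\netm^{\overline g}_{\infty}$-mass of the resulting set from below so as to obtain $\netm^{\overline g}_{\infty}(\Fcal_{(x_i,r_i)_{i\in I}}\cap U)\ge\netm^{\overline g}_{\infty}(U)$ for every open $U\subseteq V$ and every $\overline g\in\gauge_d$ with $\overline g\prec h$, the reverse inequality being automatic. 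The hard part — and the precise reason why this is strictly stronger than the classical mass transference principle, which only gives positivity of $\hau^h$ in $V$ — is to obtain \emph{equality} of the net measures, uniformly over all admissible subgauges $\overline g$ and all open subsets $U$ at once; this forces the Cantor construction and its mass estimates to be carried out with control that is uniform in both scale and location, and is where the condition $\overline g\prec h$ enters quantitatively.
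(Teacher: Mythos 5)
Your overall strategy --- reduce everything to Theorem~2 of~\cite{Durand:2007uq} --- is exactly the paper's own route: the paper offers no more proof of this statement than ``Theorem~2 in~\cite{Durand:2007uq} straightforwardly implies the following result.'' The problem is how you quote that theorem. The statement you attribute to it (full Lebesgue measure of $\Fcal_{(x_{i},g(r_{i})^{1/d})_{i\in I}}$ in $V$ already yields $\Fcal_{(x_{i},r_{i})_{i\in I}}\in\grint^g(V)$) is verbatim the result you are being asked to prove, not the form of the cited theorem. As reproduced in the paper (Theorem~\ref{ubiquityold}), Theorem~2 of~\cite{Durand:2007uq} says: if $(x_{i},r_{i})_{i\in I}\in\S_{d}(I)$ is a homogeneous ubiquitous system in $V$, then for any $h\in\gauge_{d}$ and any nonnegative nondecreasing $\ph$ coinciding with $(h^{1/d})^{-1}$ near zero, the set $\Fcal_{(x_{i},\ph(r_{i}))_{i\in I}}$ belongs to $\grint^h(V)$. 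So, as written, your argument is circular: all of its content sits in a citation whose claimed statement is the goal itself, and the one genuinely needed deduction (from the pseudo-inverse form to the present form) is absent.

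The missing bridge is short, and your book-keeping is exactly what it requires. Put $s_{i}=h(r_{i})^{1/d}$, with $h$ extended beyond $\eps_{h}$ as you propose; your verification shows $(x_{i},s_{i})_{i\in I}\in\S_{d}(I)$, and the hypothesis of the theorem says precisely that this family is a homogeneous ubiquitous system in $V$ in the sense of Definition~\ref{ubsys}. Apply Theorem~\ref{ubiquityold} to it, with $\ph$ nonnegative, nondecreasing and coinciding with $(h^{1/d})^{-1}$ near zero: this places $\Fcal_{(x_{i},\ph(s_{i}))_{i\in I}}$ in $\grint^h(V)$. For all $r_{i}$ below a suitable threshold one has $\ph(s_{i})=(h^{1/d})^{-1}(h^{1/d}(r_{i}))\leq r_{i}$; and if $x\in\Fcal_{(x_{i},\ph(s_{i}))_{i\in I}}$, the infinitely many relevant centers $x_{i}$ all lie within bounded distance of $x$ (since $\sup_{i}\ph(s_{i})<\infty$), so the $\S_{d}(I)$ condition leaves only finitely many of them with $r_{i}$ above that threshold; hence $x\in\Fcal_{(x_{i},r_{i})_{i\in I}}$. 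Thus $\Fcal_{(x_{i},\ph(s_{i}))_{i\in I}}\subseteq\Fcal_{(x_{i},r_{i})_{i\in I}}$, and since the latter is a $G_{\delta}$-set (your enumeration argument), Proposition~\ref{grintpropbase}(\ref{propbaseextension}) concludes. Your closing sketch of a self-contained net-measure mass-transference construction is a fair picture of what lies inside the cited theorem, but it is not a substitute for the deduction above, which is where the actual proof of this statement lives.
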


\begin{rem}\label{masstrprp}
In view of the relationship between size and large intersection properties given by Theorem~\ref{grintstable}(\ref{relsizelargeint}), Theorem~\ref{ubiquity} is to be compared with the mass transference principle 
established by Beresnevich and Velani in~\cite{Beresnevich:2005vn}, which states that, under the same assumptions, the set $\Fcal_{(x_{i},r_{i})_{i\in I}}$ has maximal Hausdorff $h$-measure in every open subset of 
$V$. Nevertheless, none of these results implies the other one.
\end{rem}

\medskip

In fact, we adopted in~\cite{Durand:2007uq} a slightly more general approach which relies on the notion of {\em homogeneous ubiquitous system} that is defined as follows.

\begin{df}\label{ubsys}
Let $I$ be a denumerable set and let $V$ be a nonempty open subset of $\R^d$. A family $(x_{i},r_{i})_{i\in I}\in\S_{d}(I)$ is called a homogeneous ubiquitous system in $V$ if the set $\Fcal_{(x_{i},r_{i})_{i\in I}}$ given by~(\ref{defFxiri}) has full Lebesgue measure in $V$.
\end{df}

\begin{rem}\label{remubsyscst}
By virtue of~\cite[Proposition~15]{Durand:2007uq}, if $(x_{i},r_{i})_{i\in I}\in\S_{d}(I)$ is a homogeneous ubiquitous system in $V$, so is $(x_{i},\kappa r_{i})_{i\in I}$ for any $\kappa>0$. Thus, the fact that $(x_{i},r_{i})_{i\in I}\in\S_{d}(I)$ is a homogeneous ubiquitous system in $V$ does not depend on the choice of the norm $\R^d$ is endowed with.
\end{rem}

As an example, for any integer $c\geq 2$, the family $(kc^{-j},c^{-j})_{(j,k)\in\N\times\Z^d}$ is a homogeneous ubiquitous system in $\R^d$. Similarly, Dirichlet's theorem ensures that for any $x\in\R^d$, there are infinitely many $(p,q)\in\Z^d\times\N$ such that $\|x-p/q\|_{\infty} < q^{-1-1/d}$, where $\|\cdot\|_{\infty}$ denotes the supremum norm, see~\cite[Theorem~200]{Hardy:1979fk}. Hence, $(p/q,q^{-1-1/d})_{(p,q)\in\Z^d\times\N}$ is a homogeneous ubiquitous system in $\R^d$. In addition, the optimal regular systems of points defined in~\cite{Baker:1970jf,Beresnevich:2000fk} also yield homogeneous ubiquitous systems. Examples of regular systems include the points with rational coordinates, the real algebraic numbers of bounded degree and the algebraic integers of bounded degree, see~\cite{Beresnevich:1999ys,Beresnevich:2002kx,Bugeaud:2002fk,Bugeaud:2002uq,Bugeaud:2004zr}. We refer to~\cite{Durand:2007uq} for details.

Now, given a gauge function $h\in\gauge_{d}$, the pseudo-inverse function of $h^{1/d}$ is defined on the interval $[0,h^{1/d}({\eps_{h}}^-))$ by
\[
(h^{1/d})^{-1}:r\mapsto\inf\{\rho\in [0,\eps_{h}) \:|\: h^{1/d}(\rho)\geq r\},
\]
where $h^{1/d}({\eps_{h}}^-)$ is equal to $\sup_{[0,\eps_{h})}h^{1/d}>0$. Theorem~2 in~\cite{Durand:2007uq}, which leads to Theorem~\ref{ubiquity} above, is stated as follows.

\begin{thm}\label{ubiquityold}
Let $I$ be a denumerable set, let $V$ be a nonempty open subset of $\R^d$ and let $(x_{i},r_{i})_{i\in I}\in\S_{d}(I)$ be a homogeneous ubiquitous system in $V$. Then, for any gauge function $h\in\gauge_{d}$ and any nonnegative nondecreasing function $\ph:[0,\infty)\to\R$ coinciding with $(h^{1/d})^{-1}$ near zero, the set $\Fcal_{(x_{i},\ph(r_{i}))_{i\in I}}$ belongs to the class $\grint^h(V)$.
\end{thm}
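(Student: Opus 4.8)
The plan is to verify the two clauses in the definition of $\grint^h(V)$ directly, after stripping away some easy reductions. First, $\Fcal:=\Fcal_{(x_{i},\ph(r_{i}))_{i\in I}}$ is a $G_{\delta}$-set: enumerating $I$ as $\{i_{1},i_{2},\ldots\}$ one has $\Fcal=\bigcap_{N}\bigcup_{n\geq N}B(x_{i_{n}},\ph(r_{i_{n}}))$. Second, since $(x_{i},r_{i})_{i\in I}\in\S_{d}(I)$, $\sup_{i}r_{i}<\infty$ and $\ph$ is nondecreasing, for $x$ in any bounded set the inequality $\|x-x_{i}\|<\ph(r_{i})$ can hold for infinitely many $i$ only along a subsequence with $r_{i}\to0$; hence $\Fcal$ depends only on the values of $\ph$ near $0$, and I may assume $\ph=(h^{1/d})^{-1}$ everywhere. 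Third, the function $r\mapsto h(r)/r^d$ is nonincreasing on $(0,\eps_{h}]$, so it has a limit in $(0,\infty]$ at $0$; if that limit is finite then $\ph(r)\asymp r$ near $0$, so by Remark~\ref{remubsyscst} the set $\Fcal$ also has full Lebesgue measure in $V$, and being $G_{\delta}$ it lies in $\grint^h(V)$ by Theorem~\ref{grintstable}(\ref{fullleb}). So from now on I would assume $h\prec\Id^d$, equivalently $h(r)/r^d\to\infty$, which forces $\ph(r)=(h^{1/d})^{-1}(r)=o(r)$; continuity of $h$ and the pseudo-inverse relation give the key identity $h(\ph(r))=r^d$ near $0$, which is what makes the size bookkeeping below work.

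Next I would localize. Fix $\overline{g}\in\gauge_{d}$ with $\overline{g}\prec h$ and a $c$-adic cube $\lambda$ with $\adherence{\lambda}\subseteq V$. By the localization principle for these classes from~\cite{Durand:2007uq}, together with the net-measure mass distribution principle recalled in Section~\ref{largeint}, it suffices to produce, for each such $\overline{g}$ and $\lambda$, a compactly supported Borel measure $\mu$ carried by $\Fcal\cap\lambda$ with $\mu(\R^d)\geq t\,\overline{g}(\diam{\lambda})$ for some constant $t>0$ depending only on $d$, and $\mu(\lambda')\leq\overline{g}(\diam{\lambda'})$ for every $c$-adic cube $\lambda'$: a self-improvement argument from~\cite{Durand:2007uq} then promotes the constant $t$ to the exact value needed in the definition of $\grint^h(V)$.

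To build $\mu$ I would run the usual Cantor construction driven by the homogeneous ubiquity. Starting from a ball $B_{0}\subseteq\lambda$ of radius comparable to $\diam{\lambda}$, construct inductively finite families $\mathcal{B}_{n}$ of pairwise disjoint closed balls whose interiors have the form $B(x_{i},\ph(r_{i}))$, with each index $i$ distinct from the finitely many used at earlier stages and with $r_{i}$ below a threshold $\rho_{n}\downarrow0$ chosen along the way. Given $B=\adherence{B(x_{i},\ph(r_{i}))}\in\mathcal{B}_{n}$, shrink it slightly to a concentric ball $B^{\flat}$; since $\Fcal_{(x_{i},r_{i})_{i\in I}}$ has full Lebesgue measure in $V\supseteq B^{\flat}$, the balls $B(x_{j},r_{j})$ with $B(x_{j},r_{j})\subseteq B^{\flat}$, $r_{j}<\rho_{n+1}$ and $j$ unused cover $B^{\flat}$ up to a null set, and Vitali's lemma extracts a finite pairwise disjoint subfamily $(B(x_{j},r_{j}))_{j\in J_{B}}$ of total Lebesgue measure at least $\theta\,\leb^d(B)$ for a fixed $\theta=\theta(d)>0$. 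Since $\ph(r_{j})=o(r_{j})<r_{j}$, the concentric balls $\adherence{B(x_{j},\ph(r_{j}))}$, $j\in J_{B}$, are pairwise disjoint and contained in $B$; they form the offspring of $B$, and the union over $B\in\mathcal{B}_{n}$ gives $\mathcal{B}_{n+1}$. Then $K:=\bigcap_{n}\bigcup_{B\in\mathcal{B}_{n}}B$ is compact and contained in $\Fcal\cap\lambda$, since every point of $K$ lies in $B(x_{i},\ph(r_{i}))$ for the infinitely many distinct indices $i$ met along its branch. Define $\mu$ as the limit of the mass distributions obtained by setting $\mu(B_{0})=\overline{g}(\diam{\lambda})$ (up to the constant $t$) and splitting the mass of each $B\in\mathcal{B}_{n}$ among its offspring in proportion to the Lebesgue measures $\leb^d(B(x_{j},r_{j}))$ — which, by $h(\ph(r_{j}))=r_{j}^d$, is the same as splitting in proportion to the values of $h$ at the radii of the balls of $\mathcal{B}_{n+1}$. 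The total mass is preserved, so $\mu(\R^d)=\mu(K)=\overline{g}(\diam{\lambda})$.

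The crux — and the step I expect to be the main obstacle — is the Frostman-type bound $\mu(\lambda')\leq\overline{g}(\diam{\lambda'})$, and this is exactly where the strict comparison $\overline{g}\prec h$ is indispensable. Because the offspring of a ball fill only the fixed fraction $\theta<1$ of it, the mass of a ball of $\mathcal{B}_{n}$ carries an unavoidable factor of order $\theta^{-n}$ relative to $h$ of its radius; estimating the mass a cube $\lambda'$ can collect from the relevant generation (the one whose balls have radius just below $\diam{\lambda'}$, with parents of radius at least $\diam{\lambda'}$), using the disjointness of the surrounding balls $B(x_{j},r_{j})$ and the monotonicity of $r\mapsto h(r)/r^d$ and $r\mapsto\overline{g}(r)/r^d$, the desired inequality reduces to having $\theta^{-n}\lesssim\overline{g}(\rho_{n})/h(\rho_{n})$ at every stage. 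Since $\overline{g}/h$ increases to $+\infty$ at $0$, this is arranged simply by choosing each $\rho_{n}$ small enough once the $(n-1)$-st generation has been built; the same largeness of $\overline{g}/h$ absorbs the loss of constants when passing from balls to cubes and handles the scales lying strictly between two consecutive generations. Once $\mu$ is seen to satisfy both requirements for every admissible $\overline{g}$ and $\lambda$, the localization principle yields $\Fcal\in\grint^h(V)$.
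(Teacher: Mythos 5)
Be aware that this paper contains no proof of Theorem~\ref{ubiquityold}: it is quoted as Theorem~2 of~\cite{Durand:2007uq}, so there is no in-paper argument to compare yours against. Measured against that reference, your proposal follows essentially the same route: reduce, via Lemmas~8, 9 and~10 of~\cite{Durand:2007uq} (exactly as this paper itself does in Section~\ref{proofubiquity2}), to a uniform lower bound $\netm^{\overline{g}}_{\infty}(\Fcal\cap\lambda)\geq\kappa\,\overline{g}(\diam{\lambda})$ over small $c$-adic cubes for each $\overline{g}\prec h$, and obtain this bound from a Cantor-type construction driven by the full Lebesgue measure hypothesis (Vitali extraction of disjoint balls $B(x_{j},r_{j})$ inside each construction ball) together with a mass distribution argument for the net measure. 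Your preliminary reductions are all sound: the $G_{\delta}$ property, the irrelevance of the values of $\ph$ away from $0$ (via the finiteness condition in $\S_{d}(I)$), the case $h\not\prec\Id^d$ handled by Remark~\ref{remubsyscst} and Theorem~\ref{grintstable}(\ref{fullleb}), and the identity $h(\ph(r))=r^d$ near $0$, which is what ties the Lebesgue mass of the Vitali balls to the gauge of their shrunken companions.

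One quantitative slip deserves flagging, although it does not derail the argument. The mass carried by a ball of generation $n$, relative to $h$ of its radius, is not merely of order $\theta^{-n}$: iterating the splitting rule produces in addition a factor $h(\rho)/\rho^{d}$ for each intermediate radius $\rho$, and in the relevant regime $h\prec\Id^d$ these factors tend to infinity as the scales shrink. Hence the displayed condition $\theta^{-n}\lesssim\overline{g}(\rho_{n})/h(\rho_{n})$ is not literally sufficient. What does work is precisely your fallback prescription: choose $\rho_{n}$ only after generation $n-1$ is built, small enough that $h/\overline{g}$ at the new (inner) radii beats the accumulated factor, equivalently maintain by induction $\mu(B)\leq\overline{g}(\rho_{B})$ for every construction ball $B$ of radius $\rho_{B}$; this is possible exactly because $\overline{g}/h\to\infty$ at $0$ while the accumulated factor is a fixed finite quantity at the moment of the choice. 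With this ball-level Frostman bound, the estimate for an arbitrary $c$-adic cube $\lambda'$ does follow as you indicate, by splitting the offspring met by $\lambda'$ into those whose surrounding Vitali ball has radius at least $\diam{\lambda'}$ (boundedly many, each of mass at most $\overline{g}(\diam{\lambda'})$) and the remaining ones, whose Vitali balls are pairwise disjoint, lie in a ball of radius $3\diam{\lambda'}$, and are controlled through the monotonicity of $r\mapsto\overline{g}(r)/r^{d}$ applied to their parents; the resulting dimensional constant is then absorbed by the self-improvement step you invoke.
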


\medskip

Recall that the set $\Fcal_{(x_{i},r_{i})_{i\in I}}$ defined by~(\ref{defFxiri}) is composed by the points in $\R^d$ that are at a distance less than $r_{i}$ of the point $x_{i}$ for infinitely many $i\in I$. Hence, a natural generalization of $\Fcal_{(x_{i},r_{i})_{i\in I}}$ is the set of points in $\R^d$ that are at a distance less than $r_{i}$ of some affine subspace $P_{i}$ for infinitely many $i\in I$. Specifically, let $k\in\intn{0}{d-1}$, let $I$ be a denumerable set 
and let $\S_{d}^k(I)$ be the set of all families $(P_{i},r_{i})_{i\in I}$ formed by affine subspaces $P_{i}$ of $\R^d$ with dimension $k$ and positive reals $r_{i}$ such that
\begin{equation}\label{condPiri}
\sup_{i\in I}r_{i}<\infty \qquad\text{and}\qquad \forall m\in\N \qquad \#\left\{ i\in I \:\biggl| \begin{array}{l} P_{i}\cap B_{m}\neq\emptyset \\ \mbox{and }r_{i}>1/m\end{array} \right\}<\infty,
\end{equation}
where $B_{m}$ denotes the open ball with center zero and radius $m$. Note that, identifying a point with the zero-dimensional affine subspace that contains it, we may write $\S_{d}^0(I)=\S_{d}(I)$. The natural extension of 
the set defined by~(\ref{defFxiri}) is then the set defined by~(\ref{defFPiri}), namely,
\[
\Fcal_{(P_{i},r_{i})_{i\in I}}=\left\{ x\in\R^d \:\bigl|\: \dist(x,P_{i})<r_{i} \text{ for infinitely many } i\in I \right\}.
\]

Theorem~\ref{ubiquity2} below shows that, under certain assumptions on the subspaces $P_{i}$ and the radii $r_{i}$, the set $\Fcal_{(P_{i},r_{i})_{i\in I}}$ is a set with large intersection. This result, which may thus be seen as 
the extension of Theorem~\ref{ubiquity} to $\Fcal_{(P_{i},r_{i})_{i\in I}}$, is proven in Section~\ref{proofubiquity2}.

\begin{thm}\label{ubiquity2}
Let $k\in\intn{0}{d-1}$, let $I$ be a denumerable set, let $(P_{i},r_{i})_{i\in I}\in\S_{d}^k(I)$, let $h\in\gauge_{d-k}$ and let $V$ be a nonempty open subset of $\R^d$. Assume that:
\renewcommand{\theenumi}{\Alph{enumi}}
\begin{enumerate}
\item\label{existT} there exists an affine subspace $T$ of $\R^d$ with dimension $d-k$ such that $T\cap P_{i}\neq\emptyset$ for all $i\in I$ and
\[
C=\sup_{i\in I}\bigl|\{x\in T \:|\: \dist(x,P_{i})<1\}\bigr|<\infty;
\]
\item\label{existhunderline} there exists a gauge function $\underline{h}\in\gauge_{d-k}$ with $h\prec\underline{h}$ such that for Lebesgue-almost every $x\in V$, there are infinitely many indices $i\in I$ enjoying
\begin{equation}\label{ubiquity2eq}
\dist(x,P_{i})<\underline{h}(r_{i})^{\frac{1}{d-k}}.
\end{equation}
\end{enumerate}
\renewcommand{\theenumi}{\roman{enumi}}
Then, the set $\Fcal_{(P_{i},r_{i})_{i\in I}}$ belongs to the class $\grint^{\Id^k h}(V)$.
\end{thm}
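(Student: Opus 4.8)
The plan is to reduce Theorem~\ref{ubiquity2} to the already-established result for approximation by points, namely Theorem~\ref{ubiquity} applied in the $(d-k)$-dimensional transversal $T$, and then to transfer the large intersection property back to $\R^d$ by a fibering argument. The heuristic is that the set $\Fcal_{(P_{i},r_{i})_{i\in I}}$ looks locally like (a piece of a $k$-plane) $\times$ (a set with large intersection in the $(d-k)$-dimensional transversal), and that taking products with $\R^k$ should turn a $\grint^h$ membership into a $\grint^{\Id^k h}$ membership.

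First I would set up coordinates: since the classes $\grint^g(V)$ are invariant under bi-Lipschitz maps (Theorem~\ref{grintstable}(\ref{bilip})) and the hypotheses are geometric, I may assume $T=\{0\}^k\times\R^{d-k}$, so $\R^d=\R^k\times T$, and work with the supremum norm. For each $i$, let $x_{i}$ be a point of $T\cap P_{i}$ (nonempty by hypothesis~(\ref{existT})); I would like to compare $\dist(x,P_{i})$ with $\dist(y,x_{i})$ for $x=(u,y)$ with $u\in\R^k$ small and $y\in T$. Hypothesis~(\ref{existT}), which controls the $(d-k)$-dimensional measure of the slice $\{x\in T:\dist(x,P_{i})<1\}$ uniformly in $i$, is precisely what prevents the planes $P_{i}$ from being nearly tangent to $T$: it forces the ``angle'' between $P_{i}$ and $T$ to be bounded below, so that on a fixed bounded region there are constants $0<c_{1}\le c_{2}$ with $c_{1}\,\dist(x,P_{i})\le \dist(y,x_{i})\le c_{2}\,\dist(x,P_{i})$ whenever $\dist(x,P_{i})$ is small (the slice being, up to the bounded distortion, a $(d-k)$-ball of radius comparable to $\dist(x,P_i)$ around $x_i$ intersected with $T$). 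Working this comparison out carefully, and checking that $(x_{i},r_{i})_{i\in I}$ — viewed inside $T\cong\R^{d-k}$ — lies in $\S_{d-k}(I)$ using the second condition in~(\ref{condPiri}), is the step I expect to be the main obstacle.

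Granting the comparison, hypothesis~(\ref{existhunderline}) says that for a.e.\ $x\in V$ infinitely many $i$ satisfy $\dist(x,P_{i})<\underline{h}(r_{i})^{1/(d-k)}$; by Fubini this holds for a.e.\ $y$ in a.e.\ horizontal slice $V_{u}=\{y\in T:(u,y)\in V\}$, and the comparison then gives, for a.e.\ such $y$, infinitely many $i$ with $\|y-x_{i}\|<\underline{\underline{h}}(r_{i})^{1/(d-k)}$ for a suitable $\underline{\underline h}\in\gauge_{d-k}$ still satisfying $h\prec\underline{\underline h}$ (absorbing the constants $c_{1},c_{2}$ costs only a multiplicative constant, which does not affect the $\prec$ relation). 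Applying Theorem~\ref{ubiquity} in $T$ with the gauge $\underline{\underline h}$ — or, more precisely, applying it with gauge $h$ after using Theorem~\ref{grintstable}(\ref{fullleb}) to see that the auxiliary set $\Fcal_{(x_{i},\underline{\underline h}(r_{i})^{1/(d-k)})_{i\in I}}$ has full measure in the relevant open subset of $T$, exactly mirroring how Theorem~\ref{ubiquity} is derived from Theorem~\ref{ubiquityold} — yields that the slice-set $\Fcal_{(x_{i},r_{i})_{i\in I}}\cap V_{u}$ belongs to $\grint^h(V_{u})$ for a.e.\ $u$.

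It remains to assemble the slices. Here I would prove a general ``product'' lemma: if $F\subseteq\R^{d-k}$ belongs to $\grint^{h}(W)$ then $\R^{k}\times F$ belongs to $\grint^{\Id^{k}h}(\R^{k}\times W)$. This is where the gauge $\Id^{k}h$ appears: a $c$-adic cube of generation $j$ in $\R^{d}$ projects to one of generation $j$ in $\R^{d-k}$, and $c^{-j}$ copies of it in each of the $k$ product directions are needed, contributing the factor $(c^{-j})^{k}\asymp\diam(\lambda)^{k}$ to the net-measure sum, i.e.\ replacing $\overline{h}$ by $\Id^{k}\overline{h}$; combined with the fact that $\overline{g}\prec\Id^{k}h$ forces (a gauge comparable to) $\overline{g}/\Id^{k}=:\overline{h}$ to satisfy $\overline{h}\prec h$, one gets $\netm^{\overline g}_\infty((\R^k\times F)\cap U)=\netm^{\overline g}_\infty(U)$ for all such $\overline g$ and all open $U\subseteq\R^k\times W$ by slicing $U$ and integrating the one-dimensional (i.e.\ $(d-k)$-dimensional) equalities over the $\R^k$-directions. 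Since $\Fcal_{(P_{i},r_{i})_{i\in I}}$ is a $G_{\delta}$-set containing, near each point, a set of the form $(\text{ball in }\R^k)\times(\text{slice-set})$, and since by Proposition~\ref{grintpropbase}(\ref{propbaseextension}) any $G_{\delta}$-superset of a member of $\grint^{\Id^k h}$ is again a member, a localization-and-covering argument — using that $\grint^{\Id^{k}h}(V)=\bigcap_{U}\grint^{\Id^{k}h}(U)$ over a countable open cover of $V$ (Proposition~\ref{grintpropbase}(d)) together with closure under countable intersections (Theorem~\ref{grintstable}(\ref{intersect}))— finishes the proof. The only genuinely delicate points are the angle/distortion estimate powered by hypothesis~(\ref{existT}) and the bookkeeping in the product lemma; the rest is assembly from the tools recalled in Sections~\ref{largeint} and~\ref{ubiquitysec}.
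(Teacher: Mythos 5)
Your overall strategy (reduce to point approximation in a transversal via hypothesis~(\ref{existT}), apply the point case slice by slice, then reassemble over the $\R^k$ directions to produce the factor $\Id^k$) is indeed the route the paper takes, but two of your steps fail as stated. First, the comparison with the \emph{fixed} centers $x_i\in T\cap P_i$ is only valid on $T$ itself: for a fixed slice at height $u\neq 0$, the set $\{y\in\R^{d-k} : \dist((u,y),P_i)<\rho\}$ is (up to bounded distortion) a ball around the intersection point of $P_i$ with the \emph{translated} copy of $T$ through that slice, and this point lies at distance comparable to $\|u\|$ from $x_i$; since the relevant radii $\rho\asymp\underline{h}(r_i)^{1/(d-k)}\to 0$, no two-sided comparison $\dist(y,x_i)\asymp\dist((u,y),P_i)$ holds at those scales for any fixed $u\neq0$. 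One must therefore use slice-dependent centers $z_{i,u}$ (the intersection of $P_i$ with the slice), as the paper does. This kills your assembly step as well: because the centers, hence the fibres, genuinely vary with $u$, the set $\Fcal_{(P_i,r_i)_{i\in I}}$ does \emph{not} contain, near a point, any set of the form $(\text{ball in }\R^k)\times(\text{slice set})$, so a product lemma cannot be invoked. What is needed instead is a Fubini-type slicing lemma at the level of net measures, with fibres allowed to change from slice to slice: if $\netm^h_\infty(E_{x_2}\cap U)\geq\kappa\,\netm^h_\infty(U)$ for a.e.\ $x_2$ and all open $U\subseteq W_{x_2}$, then $\netm^{\Id^k h}_\infty(E\cap U)\geq\kappa'\,\netm^{\Id^k h}_\infty(U)$ for all open $U\subseteq W$; this is the paper's Lemma~\ref{slicing}, proved by integrating the per-slice bounds over the $\R^k$-shadow of a $c$-adic cube.

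Second, your gauge bookkeeping has a gap that the paper explicitly flags in the remark following Theorem~\ref{ubiquity2}: a gauge $\overline{g}\in\gauge_d$ with $\overline{g}\prec\Id^k h$ need \emph{not} be comparable to $\Id^k\overline{h}$ with $\overline{h}\in\gauge_{d-k}$ and $\overline{h}\prec h$ (the quotient $\overline{g}/\Id^k$ need not even be monotone), so verifying the definition of $\grint^{\Id^k h}(V)$ gauge by gauge along the lines you sketch only yields the weaker conclusion recorded in that remark. Moreover, by deliberately dropping from $\underline{h}$ to $h$ in each slice you only obtain slice membership in $\grint^h(V_u)$, which controls $\netm^{\overline{h}}_\infty$ merely for $\overline{h}\prec h$. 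The paper instead keeps the strict room afforded by hypothesis~(\ref{existhunderline}): each slice set lies in $\grint^{\underline{h}}(W_{x_2})$ and hence has \emph{maximal $\netm^h_\infty$-mass} (at the gauge $h$ itself); the slicing lemma applied at $h$ then gives maximal $\netm^{\Id^k h}_\infty$-mass for the full set, and a lemma from the earlier paper (single-gauge maximal net mass for a $G_\delta$-set implies membership in the corresponding class) upgrades this to $\grint^{\Id^k h}(W)$, after which bi-Lipschitz invariance transports the conclusion back to $V$. Without that last ingredient, or some substitute for it, your argument does not reach the stated class membership.
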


\begin{rem}\label{ubiquityBV}
Assume that~(\ref{existT}) and~(\ref{existhunderline}) hold, let $\tilde h=\sqrt{h\underline{h}}$ and observe that $\tilde h\in\gauge_{d-k}$ and $h\prec\tilde h\prec\underline{h}$. Applying Theorem~\ref{ubiquity2} with $
\tilde h$ instead of $h$ leads to the fact that $\Fcal_{(P_{i},r_{i})_{i\in I}}\in\grint^{\Id^k \tilde h}(V)$. Theorem~\ref{grintstable}(\ref{relsizelargeint}) then implies that
\[
\hau^{\Id^k h}(\Fcal_{(P_{i},r_{i})_{i\in I}}\cap U)=\hau^{\Id^k h}(U)
\]
for every open subset $U$ of $V$. Beresnevich and Velani~\cite{Beresnevich:2006lr} previously obtained the same result, when~(\ref{existhunderline}) is replaced by the weaker assumption that for Lebesgue-almost every $x\in V$,
\begin{equation}\label{ubiquityBVeq}
\dist(x,P_{i})<h(r_{i})^{\frac{1}{d-k}} \qquad \text{for infinitely many}\ i\in I
\end{equation}
and when for any $\eps>0$, only finitely many $i\in I$ enjoy $r_{i}>\eps$. This result may be regarded as an extension of the mass transference principle mentioned in Section~\ref{introduction} and Remark~\ref{masstrprp}.
\end{rem}

\begin{rem}
Under the weaker assumption that~(\ref{ubiquityBVeq}) holds for Lebesgue-almost every $x\in V$, the proof of Theorem~\ref{ubiquity2} entails that $\netm^{\Id^k\overline{h}}_{\infty}(\Fcal_{(P_{i},r_{i})_{i\in I}}\cap U)=\netm^{\Id^k\overline{h}}_{\infty}(U)$ for any gauge function $\overline{h}\in\gauge_{d-k}$ such that $\overline{h}\prec h$ and any open set $U\subseteq V$, see Section~\ref{proofubiquity2}. This result is weaker than the fact that $\Fcal_{(P_{i},r_{i})_{i\in I}}\in\grint^{\Id^k h}(V)$, because the gauge functions $g\in\gauge_{d}$ for which $g\prec\Id^k h$ are not necessarily of the form $\Id^k\overline{h}$ with $\overline{h}\in\gauge_{d-k}$ and $\overline{h}\prec h$.
\end{rem}

\section{Applications to Diophantine approximation}\label{applicDioph}

In~\cite{Durand:2007uq}, we made use of Theorems~\ref{grintstable} and~\ref{ubiquity} in order to provide a full description of the size and large intersection properties of various sets arising in classical Diophantine 
approximation, such as the set of all points that are approximable with a certain accuracy by rationals, by rationals with restricted numerator and denominator or by real algebraic numbers. For example, for any 
nonincreasing sequence $\phi=(\phi(q))_{q\geq 1}$ of positive real numbers converging to zero, we employed Theorems~\ref{grintstable} and~\ref{ubiquity} in order to study the size and large intersection properties of the 
set
\begin{equation}\label{defKmphi}
K_{m,\phi}=\left\{ x\in\R^m \:\Biggl|\: \left\|x-\frac{p}{q}\right\|<\phi(q)\mbox{ for infinitely many }(p,q)\in\Z^m\times\N \right\}.
\end{equation}
This set was first studied by Khintchine~\cite{Khintchine:1926uq} in 1926 and, for $m=1$, it is equal to the set $K_{\phi}$ defined by~(\ref{defKphi}). Note that it is of the form~(\ref{defFxiri}) and is composed by the points $x\in\R^m$ (with $m\in\N$) enjoying $|qx|_{\Z^m}<q\phi(q)$ for 
infinitely many integers $q\in\N$, where $|y|_{\Z^m}=\min_{k\in\Z^m}\|y-k\|$ denotes the distance from a given point $y\in\R^m$ to $\Z^m$. The result of~\cite{Durand:2007uq} describing the size and 
large intersection properties of $K_{m,\phi}$ is the following.

\begin{thm}\label{grintkhintchine}
Let $\phi=(\phi(q))_{q\geq 1}$ be a nonincreasing sequence of positive real numbers converging to zero, let $h\in\gauge_{m}$ and let $V$ be a nonempty open subset of $\R^m$. Then,
\[
\left\{\begin{array}{lcl}
\sum_{q} h(\phi(q))q^m=\infty & \qquad\Longrightarrow\qquad & \hau^h(K_{m,\phi}\cap V)=\hau^h(V) \\[2mm]
\sum_{q} h(\phi(q))q^m<\infty & \qquad\Longrightarrow\qquad & \hau^h(K_{m,\phi}\cap V)=0.
\end{array}\right.
\]
Moreover,
\[
K_{m,\phi}\in\grint^h(V) \qquad\Longleftrightarrow\qquad \sum\nolimits_{q} h(\phi(q)) q^m=\infty.
\]
\end{thm}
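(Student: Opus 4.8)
\quad The theorem bundles a size dichotomy with a two-sided characterisation of membership in $\grint^h(V)$, and the plan is to treat separately the convergence case (the upper bounds and the ``only if'' direction) and the divergence case (the lower bounds and the ``if'' direction): the former by elementary covering estimates together with a calculus of gauge functions, the latter by feeding the classical Khintchine theorem into Theorem~\ref{ubiquity} and the mass transference principle. Throughout I would use that $K_{m,\phi}=\Fcal_{(p/q,\,\phi(q))_{(p,q)\in\Z^m\times\N}}$, that this family belongs to $\S_m(\Z^m\times\N)$ (indeed $\sup_q\phi(q)=\phi(1)<\infty$, and since $\phi$ is nonincreasing with limit $0$, only finitely many $q$ satisfy $\phi(q)>1/m'$ for each $m'$, leaving finitely many admissible $p$ with $\|p/q\|<m'$), and that the map $q\mapsto h(\phi(q))^{1/m}$ is nonincreasing and tends to $0$ (as $h$ is nondecreasing, continuous, and vanishes at $0$), while $K_{m,\phi}$ is a $G_\delta$ set.

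\medskip\noindent\textbf{Convergence case.} Assume $\sum_q h(\phi(q))\,q^m<\infty$. By countable subadditivity of $\hau^h$ it is enough to prove $\hau^h(K_{m,\phi}\cap B)=0$ for an arbitrary ball $B$; covering $K_{m,\phi}\cap B$ by the balls $B(p/q,\phi(q))$ with $q\geq Q$, of which $\mathrm{O}(q^m)$ meet $B$ for each $q$, and using $h(2r)\leq 2^m h(r)$ (a consequence of $r\mapsto h(r)/r^m$ being nonincreasing), bounds $\hau^h_\delta(K_{m,\phi}\cap B)$ by a constant times the tail $\sum_{q\geq Q}q^m h(\phi(q))$, which tends to $0$ as $Q\to\infty$ (legitimately, since $\phi(Q)\to0<\delta$). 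For the ``only if'' direction I would argue by contradiction: since a convergent series can be slightly inflated and still converge, an Abel--Dini construction, performed along the distinct values of $\phi$, yields a gauge function $\overline h\in\gauge_m$ with $\overline h\prec h$ and $\sum_q q^m\overline h(\phi(q))<\infty$; the covering estimate then gives $\hau^{\overline h}(K_{m,\phi})=0$, hence $\netm^{\overline h}_\infty(K_{m,\phi}\cap U)=0$ for every open $U$ by the comparison between net measures and Hausdorff measures recalled from \cite{Rogers:1970wb}, whereas $\netm^{\overline h}_\infty(U)>0$ for every nonempty open $U\subseteq V$; this contradicts $K_{m,\phi}\in\grint^h(V)$.

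\medskip\noindent\textbf{Divergence case.} Assume $\sum_q h(\phi(q))\,q^m=\infty$. Khintchine's theorem \cite{Khintchine:1926uq}, applied to the nonincreasing approximating function $q\mapsto h(\phi(q))^{1/m}$ whose associated series is precisely $\sum_q q^m h(\phi(q))$, shows that Lebesgue-almost every $x\in\R^m$, and a fortiori almost every $x\in V$, satisfies $\|x-p/q\|<h(\phi(q))^{1/m}$ for infinitely many $(p,q)$. Under this hypothesis, Theorem~\ref{ubiquity} yields $K_{m,\phi}\in\grint^h(V)$, which is the ``if'' direction, and the mass transference principle of Beresnevich and Velani \cite{Beresnevich:2005vn} (see Remark~\ref{masstrprp}) yields $\hau^h(K_{m,\phi}\cap U)=\hau^h(U)$ for every open $U\subseteq V$, hence in particular for $U=V$, which is the size claim. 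Alternatively, staying within the paper's own tools: if $h\not\prec\Id^m$ then $h(r)\asymp r^m$ near $0$, so $\hau^h$ is a fixed multiple of $\leb^m$ on Borel sets and $\sum_q q^m\phi(q)^m=\infty$, whence $\leb^m(V\setminus K_{m,\phi})=0$ by Khintchine; if $h\prec\Id^m$, an Abel--Dini construction for divergent series produces $g\in\gauge_m$ with $h\prec g$ and $\sum_q q^m g(\phi(q))=\infty$, so $K_{m,\phi}\in\grint^g(V)$ as above and Theorem~\ref{grintstable}(\ref{relsizelargeint}) applied to $K_{m,\phi}\cap V\in\grint^g(V)$ (using Proposition~\ref{grintpropbase}) gives $\hau^h(K_{m,\phi}\cap V)=\infty=\hau^h(V)$.

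\medskip\noindent\textbf{Where the difficulty lies.} The covering bounds and the divergence-case applications are immediate once the correct classical inputs are in hand, so the crux is the calculus of gauge functions invoked in the convergence case (and, if one bypasses the mass transference principle, also in the divergence case): realising an Abel--Dini perturbation of $\sum_q q^m h(\phi(q))$ in the form $\sum_q q^m\overline h(\phi(q))$ with $\overline h\in\gauge_m$ comparable to $h$ for the $\prec$-ordering. The obstacle is that the values of $\overline h$ are constrained only on the potentially very irregular set $\{\phi(q):q\geq1\}$ (long constancy plateaux of $\phi$), so after reparametrising along the distinct values of $\phi$ one must interpolate and extend monotonically while simultaneously preserving the monotonicity of $r\mapsto\overline h(r)/r^m$ and the divergence of $\overline h/h$ (or of $h/\overline h$); I would isolate this as a lemma, as is essentially done in \cite{Durand:2007uq}. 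I would also stress that this refinement is genuinely needed for the ``only if'' direction: $\hau^h(K_{m,\phi}\cap V)=0$ does not by itself preclude $K_{m,\phi}\in\grint^h(V)$, so the contradiction must be extracted at the level of a strictly coarser gauge $\overline h\prec h$.
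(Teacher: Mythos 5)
Your proposal is correct and takes essentially the approach the paper indicates for this result (which it quotes from~\cite{Durand:2007uq}): Khintchine's theorem with the nonincreasing radius $h(\phi(q))^{1/m}$ feeds Theorem~\ref{ubiquity} for the divergence case, while the covering estimate together with an Abel--Dini-type gauge $\overline{h}\prec h$ and Theorem~\ref{grintstable}(\ref{relsizelargeint}) (or, as you argue, directly the net-measure definition of $\grint^h(V)$) settles the convergence case, the gauge-construction lemma being exactly the technical point the paper defers to~\cite{Durand:2007lr}. The only difference is cosmetic: within this paper the large-intersection part is alternatively recovered as the special case $b=0$, $n=1$, $\psi(q)=\ind_{\{q\geq 1\}}q\,\phi(q)$ of Theorem~\ref{grintSchmidt}, whereas you argue directly from Theorem~\ref{ubiquity}, which is the route of the original proof.
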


The purpose of this section is to establish the same kind of result for a more general set which involves linear forms and is defined in the following manner. Let $\Psi_{n}$ denote the set of all nonnegative functions $\psi$ defined on $\Z^n$ (with $n\in\N$) such that $\psi(q)$ tends to zero as $\|q\|$ tends to infinity. For any function $\psi\in\Psi_{n}$ and any point $b\in\R^m$, let us consider the set
\begin{equation}\label{defSbmnpsi}
S^b_{m,n,\psi}=\left\{ (x_{1},\ldots,x_{m})\in (\R^n)^m \:\biggl|\: \begin{array}{ll}\sup\limits_{1\leq j\leq m} |q\cdot x_{j}-b_{j}|_{\Z}<\psi(q)\\ \text{for infinitely many } q\in\Z^n\end{array}\right\},
\end{equation}
where $\,\cdot\,$ denotes the standard inner product in $\R^n$. Of course, the set $S^b_{m,n,\psi}$ may be regarded as a subset of $\R^{mn}$. Moreover, it is easy to check that the set $K_{m,\phi}$ defined by~(\ref{defKmphi}) can be obtained from the set $S^b_{m,n,\psi}$ by letting $b=0$, 
$n=1$ and $\phi(q)=\ind_{\{q\geq 1\}}\psi(q)/q$ for any integer $q\in\Z$.

The size properties of the set $S^b_{m,n,\psi}$ were first investigated by Schmidt, who obtained in~\cite{Schmidt:1964vn} the following result concerning its Lebesgue measure.

\begin{thm}[Schmidt]\label{thmSchmidt}
Assume that $m+n>2$. Let $b\in\R^m$ and $\psi\in\Psi_{n}$. Then, for any open subset $V$ of $\R^{mn}$,
\[\left\{\begin{array}{lcl}
\sum_{q\in\Z^n}\psi(q)^m=\infty &\:\Longrightarrow\:& \leb^{mn}(S^b_{m,n,\psi}\cap V)=\leb^{mn}(V) \\[2mm]
\sum_{q\in\Z^n}\psi(q)^m<\infty &\:\Longrightarrow\:& \leb^{mn}(S^b_{m,n,\psi}\cap V)=0.
\end{array}\right.\]
\end{thm}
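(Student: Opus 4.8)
The plan is to treat the two implications separately: the convergence case is immediate from the Borel--Cantelli lemma, and the substance is the divergence case, which I would prove by a second-moment (quasi-independence) argument on the torus. First the reduction: the condition defining $S:=S^b_{m,n,\psi}$ is unchanged when any $x_j$ is translated by an element of $\Z^n$, so $S$ is $\Z^{mn}$-periodic and it suffices to study $\leb^{mn}(S\cap\T^{mn})$ with $\T^{mn}=\R^{mn}/\Z^{mn}$; if this equals $0$ then $\leb^{mn}(S)=0$, if it equals $1$ then $\leb^{mn}(\R^{mn}\setminus S)=0$, and in either case the stated conclusion for every open $V$ follows at once. For $q\in\Z^n$ put
\[
E_q=\bigl\{x\in\T^{mn}:\sup_{1\le j\le m}|q\cdot x_j-b_j|_{\Z}<\psi(q)\bigr\},
\]
so that $S\cap\T^{mn}=\limsup_q E_q$. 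Since $\psi\to0$, only finitely many $q$ satisfy $\psi(q)\ge1/2$, and $q=0$ yields an $E_0$ that is either empty or all of $\T^{mn}$ (in the latter case the theorem is trivial); discarding these indices affects neither $\limsup_q E_q$ nor the convergence of $\sum_{q\in\Z^n}\psi(q)^m$. For each remaining $q$, the map $y\mapsto q\cdot y\bmod1$ is a surjective homomorphism $\T^n\to\T^1$ and hence pushes Haar measure to Haar measure; as $E_q$ is a product over the $m$ coordinate blocks, this gives $\leb^{mn}(E_q)=(2\psi(q))^m$, so $\sum_q\leb^{mn}(E_q)<\infty$ if and only if $\sum_q\psi(q)^m<\infty$. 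When this series converges, Borel--Cantelli gives $\leb^{mn}(\limsup_q E_q)=0$, whence $\leb^{mn}(S)=0$ and $\leb^{mn}(S\cap V)=0$ for every open $V$.

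Assume now $\sum_q\psi(q)^m=\infty$; the goal is $\leb^{mn}(\limsup_q E_q)=1$. The heart of the matter is an estimate for the correlations $\leb^{mn}(E_q\cap E_{q'})$, $q\ne q'$, which again factor over the $m$ blocks: $\leb^{mn}(E_q\cap E_{q'})=\prod_{j=1}^m\mu_j(q,q')$ with $\mu_j(q,q')=\leb^n\{y\in\T^n:|q\cdot y-b_j|_{\Z}<\psi(q),\ |q'\cdot y-b_j|_{\Z}<\psi(q')\}$. If $q$ and $q'$ are linearly independent over $\Q$, then no nonzero $(a,b)\in\Z^2$ kills $aq+bq'$, so $y\mapsto(q\cdot y,q'\cdot y)$ is a surjective homomorphism $\T^n\to\T^2$ and pushes Haar to Haar; hence $\mu_j(q,q')=4\psi(q)\psi(q')$ and $\leb^{mn}(E_q\cap E_{q'})=\leb^{mn}(E_q)\,\leb^{mn}(E_{q'})$, i.e. exact independence, and such pairs contribute at most $\bigl(\sum_{\|q\|\le N}\leb^{mn}(E_q)\bigr)^2$ to $\sum_{\|q\|,\|q'\|\le N}\leb^{mn}(E_q\cap E_{q'})$.

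The $\Q$-dependent pairs are the delicate ones. Such a pair has $q=au$, $q'=a'u$ with $u\in\Z^n$ primitive and $a,a'$ distinct nonzero integers, and substituting $t=u\cdot y$ turns $\mu_j(q,q')$ into the one-dimensional quantity $\leb^1\{t:|at-b_j|_{\Z}<\psi(q),\ |a't-b_j|_{\Z}<\psi(q')\}$, which an elementary count of the arcs cut out by the two conditions bounds by $4\psi(q)\psi(q')$ plus a term carrying a factor $\min(|a|,|a'|)/\max(|a|,|a'|)$. These are exactly the terms where the non-monotonicity of $\psi$ could cause trouble---for $m=n=1$ one is simply looking at Khintchine's theorem for a non-monotone $\psi$, which fails in general by the Duffin--Schaeffer counterexample---but the hypothesis $m+n>2$ provides enough extra averaging (the product over $m\ge2$ blocks, or the transverse directions when $n\ge2$) that, summing over all primitive directions $u$ and all $|a|,|a'|$ up to height $N$, the total contribution of the $\Q$-dependent pairs is negligible against $\bigl(\sum_{\|q\|\le N}\leb^{mn}(E_q)\bigr)^2$. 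Adding the two cases gives a constant $C$ with $\sum_{\|q\|,\|q'\|\le N}\leb^{mn}(E_q\cap E_{q'})\le C\bigl(\sum_{\|q\|\le N}\leb^{mn}(E_q)\bigr)^2$ for arbitrarily large $N$; the divergence form of the Borel--Cantelli lemma (Kochen--Stone) then yields $\leb^{mn}(\limsup_q E_q)>0$, and a routine localization of the same estimates to arbitrary balls (discarding the finitely many $q$ of small norm relative to the ball, which does not change $\limsup_q E_q$) combined with the Lebesgue density theorem upgrades this to $\leb^{mn}(S\cap\T^{mn})=1$, hence $\leb^{mn}(S\cap V)=\leb^{mn}(V)$ for every open $V$.

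The main obstacle is this last estimate for the $\Q$-dependent pairs: controlling the contribution of the frequency vectors lying on a common rational line through the origin, uniformly in the inhomogeneous shifts $b_j$ and without any monotonicity of $\psi$, and verifying that it is indeed negligible next to $\bigl(\sum_{\|q\|\le N}\leb^{mn}(E_q)\bigr)^2$ precisely when $m+n>2$. Everything else---the torus reduction, the two Borel--Cantelli applications, the $\Q$-independent case via pushforward to $\T^2$, and the passage from positive to full measure---is routine.
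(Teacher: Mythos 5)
You are reconstructing a result the paper never proves: Theorem~\ref{thmSchmidt} is quoted from Schmidt~\cite{Schmidt:1964vn} and used as a black box, so there is no internal proof to compare with and your attempt must stand on its own. The routine parts of your plan are fine: the $\Z^{mn}$-periodicity reduction to $\T^{mn}$, the computation $\leb^{mn}(E_q)=(2\psi(q))^m$, the convergence half via Borel--Cantelli, and the exact independence of $E_q$ and $E_{q'}$ when $q,q'$ are linearly independent over $\Q$ (pushforward of Haar under $y\mapsto(q\cdot y,q'\cdot y)$) are all correct. But what you set aside as ``the main obstacle'' \emph{is} the theorem: the estimate for the rationally dependent pairs is the entire content of Schmidt's argument, and your sketch both misstates the elementary bound and asserts without proof that the hypothesis $m+n>2$ makes these pairs harmless.

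Concretely, two problems. First, for $q=au$, $q'=a'u$ the overlap of the two arc systems is not controlled by $4\psi(q)\psi(q')$ plus a term with a factor $\min(|a|,|a'|)/\max(|a|,|a'|)$; the correct error term carries $\gcd(a,a')\min\bigl(\psi(q)/|a|,\psi(q')/|a'|\bigr)$ (already for $a=1$, $a'=2$ and $\psi(q')\ge 2\psi(q)$ one has blockwise $E_q\subseteq E_{q'}$, so the overlap is $2\psi(q)$ per block, which your stated form does not dominate), and it is precisely the resulting gcd sums that make these pairs delicate. Second, and more seriously, the claim that for every $m+n>2$ the dependent pairs contribute $O\bigl((\sum_{\|q\|\le N}\leb^{mn}(E_q))^2\bigr)$ is false when $m=1$: take $n\ge 2$, $b=0$, and $\psi$ supported on a single rational line $\{au:a\in\N\}$ with $u$ primitive; your own substitution $t=u\cdot y$ reduces the whole problem to the one-dimensional non-monotone inequality $|at|_{\Z}<\psi(au)$, for which the classical Duffin--Schaeffer/Khintchine counterexample (radii supported on divisors of highly composite integers) gives $\sum_a\psi(au)=\infty$ while the limsup set is Lebesgue-null; by Kochen--Stone the correlation sum then grows strictly faster than the square of the first moment. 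So the second-moment scheme cannot deliver the statement in the generality you assert: for $m\ge 2$ the dependent-pair estimate is true but is the substantive gcd-sum computation of Schmidt (in the spirit of Gallagher and Pollington--Vaughan, where the $m$-th powers tame the gcd terms), while the case $m=1$, $n\ge 2$ with completely arbitrary $\psi$ cannot be handled by pairwise correlations at all --- indeed your reduction shows the literal statement needs extra care there (monotonicity, radiality or a restriction to primitive $q$), which is a reason to make the dependent-pair analysis explicit rather than to appeal to ``extra averaging''. As written, the proposal is a plan whose decisive step is missing and, in the form stated, incorrect.
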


In the case where $m$ and $n$ are both equal to one, the previous result does not hold and the appropriate statement would follow from the settlement of the Duffin-Schaeffer conjecture, see~\cite{Beresnevich:2005vn}.

More recently, Beresnevich and Velani~\cite{Beresnevich:2006lr} extended Theorem~\ref{thmSchmidt} to the Hausdorff measures associated with the gauge functions $\Id^{m(n-1)}h$, for $h\in\gauge_{m}$.

\begin{thm}[Beresnevich and Velani]\label{thmBV}
Assume that $m+n>2$. Let $b\in\R^m$ and $\psi\in\Psi_{n}$. Then, for any gauge function $h\in\gauge_{m}$ and any open subset $V$ of $\R^{mn}$,
\[\left\{\begin{array}{lcl}
\sum_{q\in\Z^n\setminus\{0\}} h(\frac{\psi(q)}{\|q\|})\|q\|^m=\infty &\:\Longrightarrow\:& \hau^{\Id^{m(n-1)}h}(S^b_{m,n,\psi}\cap V)=\hau^{\Id^{m(n-1)}h}(V) \\[2mm]
\sum_{q\in\Z^n\setminus\{0\}} h(\frac{\psi(q)}{\|q\|})\|q\|^m<\infty &\:\Longrightarrow\:& \hau^{\Id^{m(n-1)}h}(S^b_{m,n,\psi}\cap V)=0.
\end{array}\right.\]
\end{thm}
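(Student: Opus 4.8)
The plan is to recast $S^{b}_{m,n,\psi}$ as a set $\Fcal_{(P_{i},r_{i})_{i\in I}}$ of the kind studied in Section~\ref{ubiquitysec}, and then treat the two implications separately: the convergence one by a direct covering estimate, the divergence one via the Beresnevich--Velani variant of Theorem~\ref{ubiquity2} recorded in Remark~\ref{ubiquityBV}, with Schmidt's Theorem~\ref{thmSchmidt} supplying the Lebesgue-measure input. Write a point of $\R^{mn}\cong(\R^{n})^{m}$ as $x=(x_{1},\ldots,x_{m})$, equip $\R^{n}$ with the Euclidean norm $\|\cdot\|_{2}$ and $\R^{mn}$ with the $\max$-of-blocks norm $\|x\|=\max_{1\leq j\leq m}\|x_{j}\|_{2}$, which is harmless because, as recalled in Section~\ref{largeint}, neither the classes $\grint^{g}$ nor the notion of homogeneous ubiquitous system depend on the norm. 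For $q\in\Z^{n}\setminus\{0\}$ with $\psi(q)>0$ and $p=(p_{1},\ldots,p_{m})\in\Z^{m}$, put $P_{(q,p)}=\{x\in\R^{mn}\:|\:q\cdot x_{j}=b_{j}+p_{j}\text{ for }1\leq j\leq m\}$ and $r_{(q,p)}=\psi(q)/\|q\|_{2}$. Then $P_{(q,p)}$ is an affine subspace of dimension $k=m(n-1)$, and, because its defining set is a product over the $m$ blocks, $\dist(x,P_{(q,p)})=\max_{j}|q\cdot x_{j}-b_{j}-p_{j}|/\|q\|_{2}$; consequently $(P_{i},r_{i})_{i\in I}\in\S_{mn}^{m(n-1)}(I)$ for $I$ the set of all such $(q,p)$, and $\Fcal_{(P_{i},r_{i})_{i\in I}}=S^{b}_{m,n,\psi}$, which is therefore a $G_{\delta}$-set. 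Here $d=mn$ and $k=m(n-1)$, so $d-k=m$, $\gauge_{d-k}=\gauge_{m}$, and $\Id^{k}h=\Id^{m(n-1)}h$ is exactly the gauge of the statement; passing from $\|\cdot\|_{2}$ to the norm used there alters $\sum_{q}h(\psi(q)/\|q\|)\|q\|^{m}$ only by a bounded factor, since $h(cr)\asymp h(r)$.

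For the convergence implication, by periodicity of $S^{b}_{m,n,\psi}$ and countable subadditivity of $\hau^{\Id^{m(n-1)}h}$ it suffices to prove $\hau^{\Id^{m(n-1)}h}(S^{b}_{m,n,\psi}\cap[0,1]^{mn})=0$. For each $N$ this set is contained in $\bigcup_{\|q\|\geq N}E_{q}$, where $E_{q}=\prod_{j=1}^{m}\{x_{j}\in[0,1]^{n}\:|\:|q\cdot x_{j}-b_{j}|_{\Z}<\psi(q)\}$; for fixed $q$ with $\psi(q)>0$ each factor is a union of $\mathrm{O}(\|q\|)$ slabs of thickness $2\psi(q)/\|q\|$, each covered by $\mathrm{O}((\|q\|/\psi(q))^{n-1})$ cubes of diameter $\asymp\psi(q)/\|q\|$, so $E_{q}$ is covered by $\mathrm{O}(\|q\|^{mn}\psi(q)^{-m(n-1)})$ such cubes. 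Since $r\mapsto h(r)/r^{m}$ is nonincreasing, the $\Id^{m(n-1)}h$-mass of this cover is $\mathrm{O}(\|q\|^{mn}\psi(q)^{-m(n-1)}(\psi(q)/\|q\|)^{m(n-1)}h(\psi(q)/\|q\|))=\mathrm{O}(\|q\|^{m}h(\psi(q)/\|q\|))$, and summing over $\|q\|\geq N$ and letting $N\to\infty$ gives the claim whenever $\sum_{q}h(\psi(q)/\|q\|)\|q\|^{m}<\infty$.

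For the divergence implication, Remark~\ref{ubiquityBV} gives $\hau^{\Id^{k}h}(\Fcal_{(P_{i},r_{i})_{i\in I}}\cap U)=\hau^{\Id^{k}h}(U)$ for every open $U\subseteq V$ once assumption~(\ref{existT}) is verified, once for Lebesgue-almost every $x\in V$ one has $\dist(x,P_{i})<h(r_{i})^{1/m}$ for infinitely many $i$, and once for every $\eps>0$ only finitely many $i$ satisfy $r_{i}>\eps$. Unravelling the definitions, the middle condition says exactly that $x$ belongs to the set $S^{b}_{m,n,\tilde\psi}$ attached to $\tilde\psi(q)=\|q\|\,h(\psi(q)/\|q\|)^{1/m}$; as $m+n>2$, Schmidt's Theorem~\ref{thmSchmidt} makes this hold in full Lebesgue measure in $V$ precisely because $\sum_{q}\tilde\psi(q)^{m}=\sum_{q}h(\psi(q)/\|q\|)\|q\|^{m}=\infty$, our hypothesis. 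When $h\prec\Id^{m}$ the right-hand side $\hau^{\Id^{m(n-1)}h}(U)$ is infinite for every nonempty open $U$, so a lower bound suffices; in the complementary case $h(r)=\mathrm{O}(r^{m})$ the divergence hypothesis is $\sum_{q}\psi(q)^{m}=\infty$, Schmidt's theorem directly gives $\leb^{mn}(S^{b}_{m,n,\psi}\cap V)=\leb^{mn}(V)$, and since $\hau^{\Id^{m(n-1)}h}$ then coincides up to a constant with $\leb^{mn}$ on Borel sets (and $S^{b}_{m,n,\psi}\in\grint^{\Id^{m(n-1)}h}(V)$ by Theorem~\ref{grintstable}(\ref{fullleb})) the statement follows as well.

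The main obstacle is assumption~(\ref{existT}): it demands one $m$-dimensional affine subspace $T$ meeting every $P_{i}$ with $\sup_{i}|\{x\in T\:|\:\dist(x,P_{i})<1\}|<\infty$, and no such $T$ exists, because the normal directions $q/\|q\|$ of the $P_{(q,p)}$ are dense in the unit sphere of $\R^{n}$, so no fixed direction is uniformly transverse to all of them. The remedy is to split the index set by the direction of $q$: partition $\Z^{n}\setminus\{0\}$ into the $n$ cones $\mathcal{C}_{j_{0}}=\{q\neq0\:|\:|q_{j_{0}}|=\|q\|_{\infty}\text{ and }|q_{l}|<\|q\|_{\infty}\text{ for }l<j_{0}\}$ and put $I_{j_{0}}=\{(q,p)\in I\:|\:q\in\mathcal{C}_{j_{0}}\}$; on $I_{j_{0}}$, assumption~(\ref{existT}) holds with $T_{j_{0}}$ the $m$-dimensional subspace spanned by the $m$ copies of the $j_{0}$-th coordinate axis of $\R^{n}$, since it meets every $P_{(q,p)}$ (because $q_{j_{0}}\neq0$) and $\{x\in T_{j_{0}}\:|\:\dist(x,P_{(q,p)})<1\}$ has size $\mathrm{O}(\|q\|/|q_{j_{0}}|)=\mathrm{O}(1)$ uniformly over $q\in\mathcal{C}_{j_{0}}$ (because $|q_{j_{0}}|\geq\|q\|/\sqrt{n}$ there). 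As the cones are finite in number and cover $\Z^{n}\setminus\{0\}$, the divergent series stays divergent over at least one $\mathcal{C}_{j_{0}}$; applying the previous paragraph to the subfamily $I_{j_{0}}$ and transferring the conclusion to the superset $S^{b}_{m,n,\psi}\supseteq\Fcal_{(P_{i},r_{i})_{i\in I_{j_{0}}}}$ by monotonicity of $\hau$ (and, for the $\grint$-version, by Proposition~\ref{grintpropbase}(\ref{propbaseextension})) completes the argument. Two minor wrinkles remain. First, ``$r_{i}>\eps$ for finitely many $i$'' fails on all of $\R^{mn}$ because for fixed $q$ the translates $p$ are unbounded; this is circumvented by first running the argument on relatively compact open subsets of $V$, for which only boundedly many $p$ are relevant, and then re-globalizing by Borel regularity of $\hau^{\Id^{m(n-1)}h}$. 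Second, the stronger assertion $S^{b}_{m,n,\psi}\in\grint^{\Id^{m(n-1)}h}(V)$ requires Theorem~\ref{ubiquity2} itself rather than its variant, hence a gauge $\underline h$ slightly coarser than $h$ for which $\sum_{q}\|q\|^{m}\underline h(\psi(q)/\|q\|)$ is still divergent; such an $\underline h$ is produced by a routine reduction to a $\psi$ depending monotonically on $\|q\|$.
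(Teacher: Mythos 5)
Your proposal is correct and follows essentially the same route the paper sketches for this theorem (the route of Beresnevich and Velani): the convergence half by a direct covering of $S^b_{m,n,\psi}$, and the divergence half by viewing $S^b_{m,n,\psi}$ as approximation by the affine subspaces $P^b_{(p,q)}$, splitting $\Z^n\setminus\{0\}$ into cones so that hypothesis~(\ref{existT}) holds with the coordinate subspaces (the paper's $\mathcal{Q}_{i}$ and $T_{i}$), and feeding Schmidt's Theorem~\ref{thmSchmidt} into the mass transference/slicing result recorded in Remark~\ref{ubiquityBV}. Your localization fix for the requirement that only finitely many $r_{i}$ exceed a given $\eps$ is sound and matches in spirit the paper's way of avoiding this issue through condition~(\ref{condPiri}).
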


\begin{rem}
Note that the summability condition clearly does not depend on the choice of the norm $\R^n$ is endowed with, because $h$ belongs to $\gauge_{m}$.
\end{rem}

\begin{rem}\label{remformgauge}
It is highly probable that the statement of Theorem~\ref{thmBV} may not be extended to the gauge functions that are not of the form $\Id^{m(n-1)}h$ with $h\in\gauge_{m}$. For example, if Theorem~\ref{thmBV} held for 
the gauge $\Id^{m(n-1)}$, it would ensure that the Hausdorff $\Id^{m(n-1)}$-measure of $S^b_{m,n,\psi}$ is infinite (because the sum of $\|q\|^m$ over $q\in\Z^n\setminus\{0\}$ diverges). Nonetheless, $S^b_{m,n,\psi}
$ can be regarded as the set of all points in $\R^{mn}$ that are approximable at a certain rate by a family of $m(n-1)$-dimensional affine subspaces, see~(\ref{distPbpqpsi}) below. Therefore, when $\psi$ tends rapidly to 
zero at infinity, the $\Id^{m(n-1)}$-measure of this set could be finite, depending on some specific arithmetic properties enjoyed by the approximating subspaces. See the discussion at the end of~\cite[Section~1.2]
{Beresnevich:2006lr} for details.
\end{rem}

Recall that if the gauge function $h\in\gauge_{m}$ is such that $h\not\prec\Id^m$, the Hausdorff $\Id^{m(n-1)}h$-measure coincides, up to a multiplicative constant, with the Lebesgue measure on the Borel subsets of $\R^{mn}$. 
Thus, in this case, Theorem~\ref{thmBV} directly follows from Theorem~\ref{thmSchmidt}. In the case where $h\prec\Id^m$, the convergence part of Theorem~\ref{thmBV} may be easily proven by covering the set 
$S^b_{m,n,\psi}$ in an appropriate way. In order to establish the divergence part, Beresnevich and Velani used the result mentioned in Remark~\ref{ubiquityBV}, along with Theorem~\ref{thmSchmidt}, after observing that $S^b_{m,n,\psi}$ is of the form~(\ref{defFPiri}). Indeed, it is easy to check that $S^b_{m,n,\psi}$ is composed by the points $x\in\R^{mn}$ enjoying
\begin{equation}\label{distPbpqpsi}
\dist_{*}(x,P^b_{(p,q)})<\frac{\psi(q)}{\|q\|_{2}}
\end{equation}
for infinitely many $(p,q)\in\Z^m\times (\Z^n\setminus\{0\})$, where $\|\cdot\|_{2}$ is the Euclidean norm. Here, $\dist_{*}(x,P^b_{(p,q)})$ denotes the distance from the point $x$ to the approximating subspace
\[
P^b_{(p,q)}=\left\{ (y_{1},\ldots,y_{m})\in (\R^n)^m \:\bigl|\: \forall j\in\intn{1}{m} \quad q\cdot y_{j}=b_{j}+p_{j} \right\},
\]
when the space $\R^{mn}$ is endowed with the norm $\|\cdot\|_{*}$ defined by
\[
\forall x=(x_{1},\ldots,x_{m})\in (\R^n)^m \qquad \|x\|_{*}=\sup_{1\leq j\leq m}\|x_{j}\|_{2}.
\]
In fact, the subspaces $P^b_{(p,q)}$, for $p\in\Z^m$ and $q\in\Z^n\setminus\{0\}$, do not verify~(\ref{existT}), so Beresnevich and Velani applied the result mentioned in Remark~\ref{ubiquityBV} only to the subspaces $P^b_{(p,q)}$ for which $q$ belongs to the set
\begin{equation}\label{defQi}
\mathcal{Q}_{i}=\left\{ q=(q_{1},\ldots,q_{n})\in\Z^n\setminus\{0\} \:\bigl|\: \|q\|_{\infty}=q_{i} \right\},
\end{equation}
where $i\in\{1,\ldots,n\}$ is chosen in advance depending on the approximating function $\psi$. Those particular subspaces $P^b_{(p,q)}$ enjoy~(\ref{existT}) with common 
subspace the set $T_{i}$ of all $(x_{1,1},\ldots,x_{1,n},\ldots,x_{m,1},\ldots,x_{m,n})\in\R^{mn}$ such that $x_{j,i'}=0$ for all $j\in\intn{1}{m}$ and all $i'\in\intn{1}{n}\setminus\{i\}$. Along with the radii $r_{(p,q)}=\psi(q)/\|q\|_{2}$, they also enjoy~(\ref{condPiri}), so that the family $(P^b_{(p,q)},r_{(p,q)})_{(p,q)\in\Z^m\times\mathcal{Q}_{i}}$ belongs to the collection $\S_{mn}^{m(n-1)}(\Z^m\times\mathcal{Q}_{i})$.

These observations will enable us to make use of Theorem~\ref{ubiquity2} in order to prove the following result, which describes the large intersection properties of the set $S^b_{m,n,\psi}$ and thus complements Theorem~\ref{thmBV}.

\begin{thm}\label{grintSchmidt}
Assume that $m+n>2$. Let $b\in\R^m$ and $\psi\in\Psi_{n}$. Then, for any gauge function $h\in\gauge_{m}$ and any nonempty open subset $V$ of $\R^{mn}$,
\[
S^b_{m,n,\psi}\in\grint^{\Id^{m(n-1)}h}(V)
\qquad\Longleftrightarrow\qquad
\sum_{q\in\Z^n\setminus\{0\}}h\left(\frac{\psi(q)}{\|q\|}\right)\|q\|^m=\infty.
\]
\end{thm}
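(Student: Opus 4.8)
The plan is to prove both implications of Theorem~\ref{grintSchmidt} by combining the structural description of $S^b_{m,n,\psi}$ as a set of the form~(\ref{defFPiri}) with our main theorem (Theorem~\ref{ubiquity2}) and its stability properties (Theorem~\ref{grintstable}). Throughout, we exploit the identity~(\ref{distPbpqpsi}): up to the choice of norm $\|\cdot\|_{*}$ on $\R^{mn}$, the set $S^b_{m,n,\psi}$ is precisely $\Fcal_{(P^b_{(p,q)},r_{(p,q)})}$ with $r_{(p,q)}=\psi(q)/\|q\|_{2}$ and $P^b_{(p,q)}$ an affine subspace of dimension $k=m(n-1)$ in $\R^{mn}$, so $d=mn$ and $d-k=m$; hence gauges of the form $\Id^{m(n-1)}h$ with $h\in\gauge_{m}$ are exactly the $\Id^{k}h$ with $h\in\gauge_{d-k}$ appearing in Theorem~\ref{ubiquity2}. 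Since membership in $\grint^{\Id^{m(n-1)}h}(V)$ does not depend on the norm (the remark after the definition of $\grint^g(V)$, together with Remark~\ref{remubsyscst}), we may freely work with $\|\cdot\|_{*}$.

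For the forward (divergence $\Rightarrow$ large intersection) direction, suppose $\sum_{q\in\Z^n\setminus\{0\}}h(\psi(q)/\|q\|)\|q\|^m=\infty$. First reduce to the case $h\prec\Id^m$: if $h\not\prec\Id^m$ then $\Id^{m(n-1)}h$ agrees up to a constant with Lebesgue measure, and by Theorem~\ref{thmSchmidt} together with $\sum_q\psi(q)^m=\infty$ (which follows from divergence since $h(\psi(q)/\|q\|)\|q\|^m\asymp\psi(q)^m$ in this regime) the set $S^b_{m,n,\psi}$ has full Lebesgue measure in $V$; being a $G_\delta$-set, it lies in $\grint^{\Id^{m(n-1)}h}(V)$ by Theorem~\ref{grintstable}(\ref{fullleb}). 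So assume $h\prec\Id^m$. Next, partition $\Z^n\setminus\{0\}=\bigcup_{i=1}^n \mathcal{Q}_i$ as in~(\ref{defQi}); by the pigeonhole principle there is an index $i$ with $\sum_{q\in\mathcal{Q}_i}h(\psi(q)/\|q\|)\|q\|^m=\infty$. Restrict attention to $(P^b_{(p,q)},r_{(p,q)})_{(p,q)\in\Z^m\times\mathcal{Q}_i}$, which lies in $\S_{mn}^{m(n-1)}(\Z^m\times\mathcal{Q}_i)$ and, as noted in the text, satisfies hypothesis~(\ref{existT}) with the common subspace $T_i$. It remains to supply hypothesis~(\ref{existhunderline}): one must produce a gauge $\underline h\in\gauge_m$ with $h\prec\underline h$ such that for Lebesgue-almost every $x\in V$ there are infinitely many $(p,q)\in\Z^m\times\mathcal{Q}_i$ with $\dist_*(x,P^b_{(p,q)})<\underline h(r_{(p,q)})^{1/m}$. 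This is where the mass-transference-type input enters: one chooses $\underline h$ slightly larger than $h$ (for instance as in Remark~\ref{ubiquityBV}, interpolating between $h$ and $\Id^m$) so that $\sum_{q\in\mathcal{Q}_i}\underline h$-weighted series still diverges, whence by Theorem~\ref{thmSchmidt} the associated full-measure statement holds and translates, via~(\ref{distPbpqpsi}), into~(\ref{ubiquity2eq}). Applying Theorem~\ref{ubiquity2} then gives $\Fcal_{(P^b_{(p,q)},r_{(p,q)})_{(p,q)\in\Z^m\times\mathcal{Q}_i}}\in\grint^{\Id^{m(n-1)}h}(V)$, and since this set is contained in $S^b_{m,n,\psi}$, which is a $G_\delta$-set, Theorem~\ref{grintstable}(\ref{fullleb})-style extension, namely Proposition~\ref{grintpropbase}(\ref{propbaseextension}), yields $S^b_{m,n,\psi}\in\grint^{\Id^{m(n-1)}h}(V)$.

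For the reverse (large intersection $\Rightarrow$ divergence) direction, argue by contraposition: if $\sum_{q\in\Z^n\setminus\{0\}}h(\psi(q)/\|q\|)\|q\|^m<\infty$, pick any $\overline h\in\gauge_m$ with $\overline h\prec h$ but still satisfying $\sum_q\overline h(\psi(q)/\|q\|)\|q\|^m<\infty$ (possible by a standard diagonal/slow-divergence argument, since the series converges). Then $\Id^{m(n-1)}\overline h\prec\Id^{m(n-1)}h$ and by the convergence part of Theorem~\ref{thmBV} (applied with $\overline h$), $\hau^{\Id^{m(n-1)}\overline h}(S^b_{m,n,\psi}\cap V)=0$, in particular it is not equal to $\hau^{\Id^{m(n-1)}\overline h}(V)$, which is positive (indeed infinite, since $\Id^{m(n-1)}\overline h\prec\Id^{mn}$ forces every nonempty open set to have infinite measure). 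But if $S^b_{m,n,\psi}$ belonged to $\grint^{\Id^{m(n-1)}h}(V)$, then by Theorem~\ref{grintstable}(\ref{relsizelargeint}) it would have infinite $\hau^{\Id^{m(n-1)}\overline h}$-measure, a contradiction.

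The main obstacle is the divergence direction, specifically the verification of hypothesis~(\ref{existhunderline}): one needs the \emph{almost-everywhere} approximation statement~(\ref{ubiquity2eq}) with a gauge $\underline h$ genuinely larger than $h$, not merely equal to it. The delicate point is choosing $\underline h$ so that $h\prec\underline h$ while the $\underline h$-weighted series over $\mathcal{Q}_i$ still diverges — this requires a careful interpolation exploiting that the $h$-series diverges with room to spare, and then invoking the Lebesgue-measure statement of Theorem~\ref{thmSchmidt} for the approximating function attached to $\underline h$. Handling the restriction to a single $\mathcal{Q}_i$ (needed because hypothesis~(\ref{existT}) fails for the full family) and checking that this restricted family is still an almost-everywhere ubiquitous family in the required sense is the technical heart of the argument; the rest is bookkeeping with the norm $\|\cdot\|_*$ and with Proposition~\ref{grintpropbase}.
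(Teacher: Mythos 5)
Your proposal is correct and follows essentially the same route as the paper's own proof: pigeonholing over the sets $\mathcal{Q}_{i}$ of~(\ref{defQi}) to secure hypothesis~(\ref{existT}), producing an auxiliary gauge $\underline{h}$ (resp.\ $\overline{h}$) keeping the weighted series divergent (resp.\ convergent), feeding Theorem~\ref{thmSchmidt} into Theorem~\ref{ubiquity2} via~(\ref{distPbpqpsi}), and concluding with Proposition~\ref{grintpropbase}(\ref{propbaseextension}), Theorem~\ref{grintstable}(\ref{fullleb}) and Theorem~\ref{grintstable}(\ref{relsizelargeint}). The only caveat is that the paper obtains the auxiliary gauges by adapting~\cite[Theorem~3.5]{Durand:2007lr} rather than by an interpolation in the spirit of Remark~\ref{ubiquityBV} (which need not preserve divergence of the series); since you explicitly flag this construction as the delicate point instead of claiming the naive interpolation suffices, the argument stands as written.
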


\begin{proof}
Let us first consider the divergence case and assume that $h\prec\Id^m$. Observe that there exists a gauge function $\underline{h}\in\gauge_{m}$ such that $h\prec\underline{h}$ and
\[
\sum_{q\in\Z^n\setminus\{0\}}\underline{h}\left(\frac{\psi(q)}{\|q\|}\right)\|q\|^m=\infty.
\]
Actually, it is possible to build such a gauge function by adapting the methods developed in the proof of~\cite[Theorem~3.5]{Durand:2007lr}. Furthermore, recall that the sets $\mathcal{Q}_{i}$ are defined by~(\ref{defQi}). 
Then,
\[
\infty=\sum_{q\in\Z^n\setminus\{0\}}\underline{h}\left(\frac{\psi(q)}{\|q\|}\right)\|q\|^m\leq\sum_{i=1}^n\sum_{q\in\mathcal{Q}_{i}}\underline{h}\left(\frac{\psi(q)}{\|q\|}\right)\|q\|^m,
\]
so that the sum of $\underline{h}(\psi(q)/\|q\|)\|q\|^m$ over $q\in \mathcal{Q}_{i}$ diverges for some $i\in\intn{1}{n}$. Let $\psi_{i}(q)$ be equal to $\underline{h}(\psi(q)/\|q\|_{2})^{1/m}\|q\|_{2}$ if $q\in\mathcal{Q}_{i}
$ and to zero otherwise. Hence, the series $\sum_{q\in\Z^n}\psi_{i}(q)^m$ diverges. By virtue of Theorem~\ref{thmSchmidt}, the set $S^b_{m,n,\psi_{i}}$ has full Lebesgue measure in $V$. As a consequence, for Lebesgue-
almost every $x\in V$, there are infinitely many $(p,q)\in\Z^m\times \mathcal{Q}_{i}$ such that
\[
\dist_{*}(x,P^b_{(p,q)})<\frac{\psi_{i}(q)}{\|q\|_{2}}=\underline{h}\left(\frac{\psi(q)}{\|q\|_{2}}\right)^{1/m}.
\]
Owing to the fact that~(\ref{existT}) is verified by the subspaces $P^b_{(p,q)}$, for $(p,q)\in\Z^m\times\mathcal{Q}_{i}$, it then follows from Theorem~\ref{ubiquity2} that the set of all $x\in V$ enjoying
\[
\dist_{*}(x,P^b_{(p,q)})<\frac{\psi(q)}{\|q\|_{2}}
\]
for infinitely many $(p,q)\in\Z^m\times \mathcal{Q}_{i}$ belongs to the class $\grint^{\Id^{m(n-1)}h}(V)$. As the $G_{\delta}$-set $S^b_{m,n,\psi}$ contains this last set, it belongs to $\grint^{\Id^{m(n-1)}h}(V)$ as well. 
The result still holds if $h\not\prec\Id^m$. Indeed, in this case, the series $\sum_{q\in\Z^n}\psi(q)^m$ diverges. The set $S^b_{m,n,\psi}$ then has full Lebesgue measure in $V$ due to Theorem~\ref{thmSchmidt}, and 
thus belongs to the class $\grint^{\Id^{m(n-1)}h}(V)$ by Theorem~\ref{grintstable}(\ref{fullleb}).

Let us now consider the convergence case. Observe that there exists a gauge function $\overline{h}\in\gauge_{m}$ such that $\overline{h}\prec h$ and
\[
\sum_{q\in\Z^n\setminus\{0\}}\overline{h}\left(\frac{\psi(q)}{\|q\|}\right)\|q\|^m<\infty.
\]
Again, to build such a function, one may adapt the ideas given in the proof of~\cite[Theorem~3.5]{Durand:2007lr}. Theorem~\ref{thmBV} ensures that the set $S^b_{m,n,\psi}$ has Hausdorff measure zero for the gauge 
function $\Id^{m(n-1)}\overline{h}$. It follows from Theorem~\ref{grintstable}(\ref{relsizelargeint}) that this set cannot belong to the class $\grint^{\Id^{m(n-1)}h}(V)$.
\end{proof}

\begin{rem}
Observe that the gauge functions for which the statement of Theorem~\ref{grintSchmidt} holds are of the form $\Id^{m(n-1)}h$ with $h\in\gauge_{m}$, that is, are those for which Theorem~\ref{thmBV} is valid. In view of 
Remark~\ref{remformgauge} and the relationship between size properties and large intersection properties provided by Theorem~\ref{grintstable}(\ref{relsizelargeint}), it is highly likely that the statement of Theorem~\ref{grintSchmidt} does not hold for the gauge functions that are not of the preceding form.
\end{rem}

\begin{rem}
The hardest part of Theorem~\ref{thmBV}, that is, the divergence part when $h\prec\Id^m$ and $V\neq\emptyset$, may be deduced from Theorem~\ref{grintSchmidt}. Indeed, in this case, if the sum of $h(\psi(q)/\|q\|)\|q
\|^m$ over $q\in\Z^n\setminus\{0\}$ diverges, it is possible to build a gauge function $\underline{h}\in\gauge_{m}$ such that $h\prec\underline{h}$ and the sum of $\underline{h}(\psi(q)/\|q\|)\|q\|^m$ diverges as well. 
Due to Theorem~\ref{grintSchmidt}, the set $S^b_{m,n,\psi}$ then belongs to $\grint^{\Id^{m(n-1)}\underline{h}}(V)$. It finally suffices to apply Theorem~\ref{grintstable}(\ref{relsizelargeint}) to get
\[
\hau^{\Id^{m(n-1)}h}(S^b_{m,n,\psi}\cap V)=\infty=\hau^{\Id^{m(n-1)}h}(V).
\]
\end{rem}

Note that Theorem~\ref{grintSchmidt} directly leads to the part of Theorem~\ref{grintkhintchine} concerning the large intersection properties of the set $K_{m,\phi}$ defined by~(\ref{defKmphi}) where $\phi$ is a 
nonincreasing sequence of positive real numbers converging to zero, because this set can be seen as a particular case of the set $S^b_{m,n,\psi}$.

Using Theorem~\ref{grintSchmidt}, it is also possible to describe the large intersection properties of the set coming into play in Groshev's theorem~\cite{Groshev:1938jk}. Given a nonincreasing sequence $\phi=(\phi(Q))_{Q
\geq 1}$ of positive real numbers converging to zero, this set $\Gamma_{m,n,\phi}$ is formed by the points $(x_{1},\ldots,x_{m})\in (\R^n)^m$ such that
\[
\forall j\in\intn{1}{m} \qquad |q\cdot x_{j}|_{\Z}<\|q\|_{\infty}\,\phi(\|q\|_{\infty})
\]
for infinitely many $q\in\Z^n$. Groshev first studied the size properties of the set $\Gamma_{m,n,\phi}$ by investigating its Lebesgue measure. More recently, Dickinson and Velani~\cite{Dickinson:1997ul} extended 
Groshev's result to the Hausdorff measures associated with fairly general gauge functions. As a complement, the following corollary to Theorem~\ref{grintSchmidt} supplies a description of the large intersection 
properties of $\Gamma_{m,n,\phi}$.

\begin{cor}\label{corGroshev}
Assume that $n>1$. Let $\phi=(\phi(Q))_{Q\geq 1}$ be a nonincreasing sequence of positive real numbers converging to zero. Then, for any gauge function $h\in\gauge_{m}$ and any nonempty open subset $V$ of $
\R^{mn}$,
\[
\Gamma_{m,n,\phi}\in\grint^{\Id^{m(n-1)}h}(V)
\qquad\Longleftrightarrow\qquad
\sum_{Q=1}^\infty h(\phi(Q))Q^{m+n-1}=\infty.
\]
\end{cor}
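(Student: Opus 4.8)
The plan is to recognise $\Gamma_{m,n,\phi}$ as a set of the form $S^0_{m,n,\psi}$ and reduce to Theorem~\ref{grintSchmidt}. Unwinding the definition, a point $x=(x_1,\ldots,x_m)\in(\R^n)^m$ lies in $\Gamma_{m,n,\phi}$ if and only if $\sup_{1\leq j\leq m}|q\cdot x_j|_{\Z}<\|q\|_{\infty}\phi(\|q\|_{\infty})$ for infinitely many $q\in\Z^n\setminus\{0\}$, so that $\Gamma_{m,n,\phi}=S^0_{m,n,\psi}$ with $\psi(q)=\|q\|_{\infty}\phi(\|q\|_{\infty})$ for $q\neq 0$ and $\psi(0)=0$. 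Two preliminary observations are needed. First, $\Gamma_{m,n,\phi}$ is a $G_{\delta}$-set, being the countable intersection over $N\geq 1$ of the open sets $\bigcup_{\|q\|_{\infty}\geq N}\bigcap_{j=1}^{m}\{x : |q\cdot x_j|_{\Z}<\|q\|_{\infty}\phi(\|q\|_{\infty})\}$. Second, grouping $q\in\Z^n\setminus\{0\}$ according to the value $Q=\|q\|_{\infty}$, for which there are $(2Q+1)^n-(2Q-1)^n$ choices, a number comparable to $Q^{n-1}$, and using that the summability condition is independent of the norm because $h\in\gauge_m$ (as in the remark following Theorem~\ref{thmBV}), one has $\psi(q)/\|q\|_{\infty}=\phi(\|q\|_{\infty})$, whence
\[
\sum_{q\in\Z^n\setminus\{0\}}h\!\left(\frac{\psi(q)}{\|q\|}\right)\|q\|^m \;\asymp\; \sum_{Q=1}^{\infty}h(\phi(Q))\,Q^{m+n-1}.
\]
The same comparison holds with $\phi$ replaced by any function $g$ and $\psi$ by $q\mapsto\|q\|_{\infty}g(\|q\|_{\infty})$.

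For the implication ``$\Leftarrow$'', assume $\sum_Q h(\phi(Q))Q^{m+n-1}=\infty$. The obstacle here is that $\psi(q)=\|q\|_{\infty}\phi(\|q\|_{\infty})$ need not tend to $0$ (take $\phi(Q)=1/Q$), so Theorem~\ref{grintSchmidt} does not apply to $\psi$ directly. I would therefore shrink the approximating quantity: set $g(Q)=\min\{\phi(Q),(Q\log(Q+2))^{-1}\}$ and let $\psi'(q)=\|q\|_{\infty}g(\|q\|_{\infty})$ for $q\neq 0$, $\psi'(0)=0$. Then $\psi'(q)\leq(\log(\|q\|_{\infty}+2))^{-1}\to 0$, so $\psi'\in\Psi_n$, while $g\leq\phi$ forces $\psi'\leq\psi$ and hence $S^0_{m,n,\psi'}\subseteq S^0_{m,n,\psi}=\Gamma_{m,n,\phi}$. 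The point is that this shrinking does not destroy divergence. Since $h$ is nondecreasing, $h(g(Q))Q^{m+n-1}=\min\{a_Q,b_Q\}$ with $a_Q=h(\phi(Q))Q^{m+n-1}$ and $b_Q=h((Q\log(Q+2))^{-1})Q^{m+n-1}$; using that $h\in\gauge_m$ gives $h(r)\geq c\,r^m$ near $0$ for some $c>0$, so $b_Q\geq c\,Q^{n-1}(\log(Q+2))^{-m}$ for large $Q$, and this tends to infinity because $n\geq 2$. Thus $b_Q\geq 1$ eventually, so $\min\{a_Q,b_Q\}\geq\min\{a_Q,1\}$ eventually, and $\sum_Q\min\{a_Q,1\}=\infty$ whenever $\sum_Q a_Q=\infty$; hence $\sum_Q h(g(Q))Q^{m+n-1}=\infty$. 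By the displayed comparison, the series of Theorem~\ref{grintSchmidt} attached to $\psi'$ diverges, and since $m+n>2$ (as $n\geq 2$ and $m\geq 1$) and $\psi'\in\Psi_n$, that theorem yields $S^0_{m,n,\psi'}\in\grint^{\Id^{m(n-1)}h}(V)$. As $\Gamma_{m,n,\phi}$ is a $G_{\delta}$-set containing $S^0_{m,n,\psi'}$, Proposition~\ref{grintpropbase}(\ref{propbaseextension}) gives $\Gamma_{m,n,\phi}\in\grint^{\Id^{m(n-1)}h}(V)$.

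For the implication ``$\Rightarrow$'', I argue the contrapositive: assume $\sum_Q h(\phi(Q))Q^{m+n-1}<\infty$. Using again $h(r)\geq c\,r^m$ near $0$, we get $h(\phi(Q))Q^{m+n-1}\geq c\,(Q\phi(Q))^m Q^{n-1}\geq c\,(Q\phi(Q))^m$ for large $Q$, so convergence of the series forces $Q\phi(Q)\to 0$; in particular $\psi(q)=\|q\|_{\infty}\phi(\|q\|_{\infty})\to 0$, i.e. $\psi\in\Psi_n$. By the displayed comparison the series in Theorem~\ref{grintSchmidt} attached to $\psi$ also converges, and that theorem (with $b=0$) then yields $\Gamma_{m,n,\phi}=S^0_{m,n,\psi}\notin\grint^{\Id^{m(n-1)}h}(V)$, which completes the proof.

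The main difficulty is the ``$\Leftarrow$'' direction, specifically the fact that the natural choice $q\mapsto\|q\|_{\infty}\phi(\|q\|_{\infty})$ may fail to lie in $\Psi_n$, together with verifying that the truncation $g$ preserves the divergence of $\sum_Q h(\phi(Q))Q^{m+n-1}$; the hypothesis $n>1$ enters precisely at this step, since it is what makes the truncated quantity $Q^{n-1}(\log(Q+2))^{-m}$ tend to infinity. The remaining ingredients --- the identification $\Gamma_{m,n,\phi}=S^0_{m,n,\psi}$, the translation of the summability condition via grouping by $\|q\|_{\infty}$, and the $G_{\delta}$ and containment facts --- are routine.
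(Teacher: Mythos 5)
Your proposal is correct and takes essentially the same route as the paper: apply Theorem~\ref{grintSchmidt} with $b=0$ and $\psi(q)=\|q\|_{\infty}\,\phi(\|q\|_{\infty})$, translating the summability condition by counting the (order $Q^{n-1}$) vectors $q$ with $\|q\|_{\infty}=Q$. The only difference is your extra care in the divergence direction --- truncating $\psi$ so that it lies in $\Psi_{n}$ and checking that this preserves divergence, plus noting that convergence forces $Q\phi(Q)\to 0$ --- which handles a corner case (e.g. $\phi(Q)=1/Q$, where $\|q\|_{\infty}\phi(\|q\|_{\infty})\not\to 0$) that the paper's one-line proof passes over silently; this is a legitimate refinement of the same argument, not a different method.
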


\begin{proof}
It suffices to apply Theorem~\ref{grintSchmidt} with $b=0$ and $\psi(q)=\|q\|_{\infty}\,\phi(\|q\|_{\infty})$ for all $q\in\Z^n$ and to observe that the number of vectors $q\in\Z^n$ for which $\|q\|_{\infty}=Q$ is 
equivalent to $2^n n\, Q^{n-1}$ as $Q$ tends to infinity.
\end{proof}

\section{Applications to dynamical systems}\label{applicDyn}

\subsection{Perturbation theory for Hamiltonian systems}\label{applicKAM}

The purpose of this subsection is to show how the results obtained in the previous sections may be applied to the perturbation theory for Hamiltonian systems. We shall only give basic recalls on this topic and we refer to~\cite[Chapter~X]{Hairer:2006rm} and~\cite{Poschel:2001rc} for fuller expositions.

The behavior of a general non-dissipative mechanical system with $n$ degrees of freedom may be described through a {\em Hamiltonian system} of differential equations
\[
\dot x_{i}=\frac{\partial H}{\partial y_{i}}, \qquad
\dot y_{i}=-\frac{\partial H}{\partial x_{i}}, \qquad
i\in\intn{1}{n},
\]
where $H:\R^n\times\R^n\to\R$. This system is called {\em integrable} if there exists a {\em canonical} transformation
\[
\begin{array}{rccc}
W:&\R^n\times\T^n &\to& \R^n\times\R^n \\
&(a,\theta) &\mapsto& (x,y)
\end{array}
\]
preserving the symplectic structure such that the Hamiltonian $H\circ W$ (which is simply denoted by $H$ in what follows) does not depend on $\theta$. Here, $\T^n$ denotes the $n$-dimensional torus obtained from $
\R^n$ by identifying the points whose coordinates differ from an integer multiple of $2\pi$. In the {\em action-angle} coordinates $(a,\theta)$, Hamilton's equations then become
\[
\dot\theta_{i}=\frac{\partial H}{\partial a_{i}}, \qquad
\dot a_{i}=-\frac{\partial H}{\partial\theta_{i}}=0, \qquad
i\in\intn{1}{n}
\]
and are clearly solved, for any fixed vector $a_{*}\in\R^n$, by the constant function $a(t)=a_{*}$ and the {\em conditionally periodic} flow $\theta(t)=\theta(0)+t\,\omega(a_{*})$ on the torus $\T^n$ with frequencies $
\omega(a_{*})=(\omega_{1}(a_{*}),\ldots,\omega_{n}(a_{*}))$ given by $\omega_{i}(a_{*})=\partial H/\partial a_{i}(a_{*})$. The flow is periodic if there are integers $q_{1},\ldots,q_{n}$ such that $\omega_{i}(a_{*})/\omega_{i'}
(a_{*})=q_{i}/q_{i'}$ for any $i,i'\in\intn{1}{n}$. Otherwise, the flow is called {\em quasi-periodic}. This occurs in particular when the frequencies are {\em non-resonant}, which means that
\[
\forall q\in\Z^n\setminus\{0\} \qquad q\cdot\omega(a_{*})\neq 0.
\]
Moreover, under this assumption, the trajectory $\{\theta(t),\ t\in\R\}$ is dense in the torus $\T^n$. In any case, the solution curve is winding around the invariant torus $\tor_{a_{*}}=\{a_{*}\}\times\T^n$ with constant 
frequencies $\omega(a_{*})$. Hence, the phase space is {\em foliated} into a $n$-parameter family of invariant tori on which the flow is conditionally periodic.

Integrable Hamiltonian systems raised a large interest because their equations can be solved analytically in the previous manner. The trouble is that, in general, a physical system is not integrable. However, it is often 
possible to view such a system as a perturbation of an integrable approximate one. This observation led to the development of the perturbation theory that we briefly present hereunder. In that context, we may assume that 
the number $n$ of degrees of freedom is at least two, as one degree of freedom systems are always integrable.

Let us consider an invariant torus of an integrable Hamiltonian system, such as for example $\tor_{0}=\{0\}\times\T^n$. It may be shown that this torus is also invariant under the flow of every real-analytic Hamiltonian $H
$ which is not necessarily integrable but for which the linear terms in the Taylor expansion with respect to $a$ at zero do not depend on $\theta$, see~\cite[p.~410]{Hairer:2006rm}. More precisely, this condition amounts 
to the fact that
\begin{equation}\label{HamilTayl}
H(a,\theta)=c+\omega\cdot a+\frac{1}{2} a^T M(a,\theta) a,
\end{equation}
for some real $c$, some vector $\omega\in\R^n$ and some real symmetric $n\times n$-matrix $M(a,\theta)$ analytic in its arguments. For such a Hamiltonian, $\tor_{0}$ is still invariant and the flow on it is conditionally 
periodic with frequencies $\omega$. Let us now consider a perturbation
\[
H(a,\theta)+\eps\,f(a,\theta,\eps)
\]
of such a Hamiltonian, for small $\eps$, by a real-analytic function $f$. Under certain assumptions that we detail below, Kolmogorov (1954) managed to build a near-identity symplectic transformation $(a,\theta)\mapsto 
(\tilde a,\tilde\theta)$ such that the perturbed Hamiltonian in the new variables is also of the form~(\ref{HamilTayl}) with the same $\omega$. It thus admits $\tor_{0}$ as an invariant torus and this torus carries a 
conditionally periodic flow with the same frequencies as the original system. This construction is possible if the angular average
\[
\overline{M}_{0}=\frac{1}{(2\pi)^n}\int_{\T^n} M(0,\theta) \,\dd\theta
\]
is an invertible matrix and if the frequencies $\omega$ satisfy {\em Siegel's Diophantine condition}
\begin{equation}\label{siegelcond}
\forall q\in\Z^n\setminus\{0\} \qquad |q\cdot\omega|\geq\frac{\gamma}{{\|q\|_{1}}^\nu},
\end{equation}
for some positive reals $\gamma$ and $\nu$, where $\|\cdot\|_{1}$ denotes the $\ell^1$-norm. In this case, the frequencies $\omega$ are called {\em strongly non-resonant}. Note that the reals $\gamma$ and $\nu$, together with other parameters, impose a limitation 
on the size $\eps$ of the perturbation for which the construction is possible. Along with its extensions by Arnold (1963) and Moser (1962), Kolmogorov's result forms what is now called the KAM theory.

The existence of strongly non-resonant frequencies is quite obvious, due to the following observation. Given a real $\nu>0$, the set of all frequencies for which the Diophantine condition~(\ref{siegelcond}) holds for some 
$\gamma>0$ is
\[
\Omega_{n,\nu}=\bigcup_{\gamma>0}\uparrow\left\{ \omega\in\R^n \:\biggl|\: |q\cdot\omega|\geq\frac{\gamma}{{\|q\|_{1}}^\nu} \text{ for all }q\in\Z^n\setminus\{0\} \right\}.
\]
If $\nu>n-1$, then it is easy to check that $\Omega_{n,\nu}$ has full Lebesgue measure in $\R^n$. As a result, the set
\[
\Omega_{n}=\bigcup_{\nu>0}\uparrow\Omega_{n,\nu}
\]
formed by the strongly non-resonant frequencies has full Lebesgue measure in $\R^n$. Moreover, the set $\Omega_{n,\nu}$ is empty if $\nu<n-1$, owing to Dirichlet's pigeon-hole principle, and that it has 
Lebesgue measure zero, and Hausdorff dimension $n$, when $\nu=n-1$, see~\cite{Dodson:1986jk} and the references therein.

Let us suppose that $\nu$ is greater than $n-1$. Then, the frequencies for which Siegel's Diophantine condition~(\ref{siegelcond}) does not hold for any $\gamma>0$ form the set
\[
R_{n,\nu}=\R^n\setminus\Omega_{n,\nu}=\bigcap_{\gamma>0}\downarrow\left\{ \omega\in\R^n \:\biggl|\: |q\cdot\omega|<\frac{\gamma}{{\|q\|_{1}}^\nu} \text{ for some }q\in\Z^n\setminus\{0\} \right\}.
\]
Even if it has Lebesgue measure zero, this set is large and omnipresent in various senses. To begin with, $R_{n,\nu}$ is dense $G_{\delta}$-subset of $\R^n$, due to the fact that it contains $\Q^n$. Furthermore, 
M.~Dodson and J.~Vickers~\cite{Dodson:1986jk} proved that
\[
\dim R_{n,\nu}=n-1+\frac{n}{\nu+1},
\]
thereby giving a first description of the size properties of the set $R_{n,\nu}$. Note that the Hausdorff dimension of $R_{n,\nu}$ is always greater than $n-1$ and is therefore almost maximal, that is, equal to $n$, when the 
number $n$ of degrees of freedom is large.

The results of the previous sections lead to the following theorem, which refines the description of the size properties of the set $R_{n,\nu}$ by giving the value of its Hausdorff $\Id^{n-1}h$-measure for every gauge 
function $h\in\gauge_{1}$. On top of that, this theorem shows that $R_{n,\nu}$ is a set with large intersection and it fully describes its large intersection properties.

\begin{thm}\label{thmRnnu}
Let us assume that $n\geq 2$. Let $h\in\gauge_{1}$, let $V$ be a nonempty open subset of $\R^n$ and let $\nu>n-1$. Then,
\[
\left\{\begin{array}{rcl}
\sum_{q} h(q^{-(\nu+1)/n})=\infty & \quad\Longrightarrow\quad & \hau^{\Id^{n-1}h}(R_{n,\nu}\cap V)=\infty \\[1mm]
\sum_{q} h(q^{-(\nu+1)/n})<\infty & \quad\Longrightarrow\quad & \hau^{\Id^{n-1}h}(R_{n,\nu}\cap V)=0.
\end{array}\right.
\]
Moreover,
\[
R_{n,\nu}\in\grint^{\Id^{n-1}h}(V) \quad\Longleftrightarrow\quad \sum\nolimits_{q} h(q^{-(\nu+1)/n})=\infty.
\]
\end{thm}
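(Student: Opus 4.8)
The plan is to realize $R_{n,\nu}$ as a countable intersection of sets of the form $\Fcal_{(P_i,r_i)_{i\in I}}$ and to run Theorem~\ref{ubiquity2} on each factor.

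\emph{Reduction.} For $j\in\N$ and $q\in\Z^n\setminus\{0\}$ set $H_q=\{\omega\in\R^n:q\cdot\omega=0\}$, an $(n-1)$-dimensional subspace, and $r_q^{(j)}=(j\,\|q\|_1^\nu\,\|q\|_2)^{-1}$. Using the Euclidean distance, $\dist(\omega,H_q)<r_q^{(j)}$ is equivalent to $|q\cdot\omega|<(j\,\|q\|_1^\nu)^{-1}$, so $\Fcal_{(H_q,r_q^{(j)})_{q}}$ equals the $G_\delta$-set $\Fcal_j:=\{\omega:|q\cdot\omega|<(j\,\|q\|_1^\nu)^{-1}\text{ for infinitely many }q\}$, and $(H_q,r_q^{(j)})_{q}\in\S_n^{n-1}(\Z^n\setminus\{0\})$. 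Since ``$|q\cdot\omega|<\gamma/\|q\|_1^\nu$ for some $q\neq0$, for every $\gamma>0$'' forces, for each $j$, infinitely many such $q$ at level $1/j$ (if $q_{0}\cdot\omega=0$ for some $q_{0}\neq0$, the integer multiples of $q_{0}$ already qualify; otherwise it is immediate), one gets $R_{n,\nu}=\bigcap_{j\geq1}\Fcal_j$. As $\grint^{\Id^{n-1}h}(V)$ is closed under countable intersections (Theorem~\ref{grintstable}(\ref{intersect})), in the divergence case it suffices to prove $\Fcal_j\in\grint^{\Id^{n-1}h}(V)$ for every $j$ and every nonempty open $V$.

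\emph{Divergence case.} Here one automatically has $h\prec\Id$ (otherwise $h(r)$ is comparable to $r$ near $0$, and then $\sum_{q}h(q^{-(\nu+1)/n})\asymp\sum_{q}q^{-(\nu+1)/n}<\infty$ because $\nu>n-1$). Split $\Z^n\setminus\{0\}=\bigcup_{i=1}^{n}\mathcal{Q}_i$ with $\mathcal{Q}_i=\{q:\|q\|_\infty=|q_i|\}$, fix $j$ and $i$, and apply Theorem~\ref{ubiquity2} to $(H_q,r_q^{(j)})_{q\in\mathcal{Q}_i}$ with $k=n-1$, the gauge $h\in\gauge_1$, and $T$ the $i$-th coordinate axis. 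Assumption~(\ref{existT}) holds: $0\in T\cap H_q$, and for $q\in\mathcal{Q}_i$ the set $\{x\in T:\dist(x,H_q)<1\}$ is an interval of length $2\|q\|_2/|q_i|\leq2\sqrt n$, so $C\leq2\sqrt n<\infty$. For assumption~(\ref{existhunderline}) one needs a gauge $\underline h\in\gauge_1$ with $h\prec\underline h$ such that, for almost every $\omega$, there are infinitely many $q\in\mathcal{Q}_i$ with $\dist(\omega,H_q)<\underline h(r_q^{(j)})$, i.e. $|q\cdot\omega|<\underline h(r_q^{(j)})\,\|q\|_2$. Passing to the chart $\Psi_i:\omega\mapsto(\omega_i,(\omega_\ell/\omega_i)_{\ell\neq i})$, which is bi-Lipschitz on each $\{|\omega_i|\in[a,b],\ |\omega_\ell/\omega_i|\leq M\}$ and trivializes the scaling invariance of the $H_q$, turns this into a statement of the shape ``for a.e. $y\in\R^{n-1}$ there are infinitely many $q'\in\Z^{n-1}\setminus\{0\}$ with $|q'\cdot y|_{\Z}<\psi(q')$'', where $\psi(q')\asymp\underline h(\|q'\|^{-(\nu+1)})\|q'\|$; this is a classical Groshev-type limsup set, of full Lebesgue measure iff $\sum_{q'}\psi(q')\asymp\sum_{Q}Q^{n-1}\underline h(Q^{-(\nu+1)})=\infty$. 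Because $\sum_{Q}Q^{n-1}h(Q^{-(\nu+1)})\asymp\sum_{q}h(q^{-(\nu+1)/n})=\infty$ (the dyadic comparison $q\leftrightarrow Q^n$) and $\underline h$ is to be smaller than $h$, such a $\underline h$ exists by the standard procedure for shrinking a gauge while keeping a divergent series divergent (adapting the proof of \cite[Theorem~3.5]{Durand:2007lr}, exactly as in the proof of Theorem~\ref{grintSchmidt}); the required full-measure statement then follows from Schmidt's Theorem~\ref{thmSchmidt} when $n\geq3$ (so that $1+(n-1)>2$) and from Khintchine's theorem when $n=2$ (the limit set is then of $K_\phi$-type). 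Theorem~\ref{ubiquity2} now gives $\Fcal_{(H_q,r_q^{(j)})_{q\in\mathcal{Q}_i}}\in\grint^{\Id^{n-1}h}(V)$, hence $\Fcal_j\in\grint^{\Id^{n-1}h}(V)$ by Proposition~\ref{grintpropbase}(\ref{propbaseextension}) ($\Fcal_j$ being a $G_\delta$-set containing it), and intersecting over $j$ yields $R_{n,\nu}\in\grint^{\Id^{n-1}h}(V)$.

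\emph{Convergence case and the measure dichotomy.} If $\sum_{q}h(q^{-(\nu+1)/n})<\infty$, cover $R_{n,\nu}\cap B_m\subseteq\Fcal_j$ by the slabs $\{\dist(\cdot,H_q)<r_q^{(j)}\}\cap B_m$ for $\|q\|\geq N$, each split into $\asymp(r_q^{(j)})^{-(n-1)}$ pieces of diameter $\asymp r_q^{(j)}$; this bounds $\hau^{\Id^{n-1}h}_{\delta}(R_{n,\nu}\cap B_m)$ by a constant times $\sum_{\|q\|\geq N}h(r_q^{(j)})\asymp\sum_{Q\geq N}Q^{n-1}h(Q^{-(\nu+1)})\to0$, whence $\hau^{\Id^{n-1}h}(R_{n,\nu})=0$. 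Running the same estimate with a gauge $\bar h\in\gauge_1$ chosen so that $\Id^{n-1}\bar h\prec\Id^{n-1}h$ and $\sum_{Q}Q^{n-1}\bar h(Q^{-(\nu+1)})<\infty$ (possible by the dual enlarging procedure) gives $\hau^{\Id^{n-1}\bar h}(R_{n,\nu})=0$, which by Theorem~\ref{grintstable}(\ref{relsizelargeint}) is incompatible with $R_{n,\nu}\in\grint^{\Id^{n-1}h}(V)$. The Hausdorff-measure dichotomy then follows: its convergence half is the bound just obtained, and its divergence half is obtained by applying the already-proven $\grint$-statement to a gauge $\tilde h$ with $h\prec\tilde h$ and $\sum_{q}\tilde h(q^{-(\nu+1)/n})=\infty$ and invoking Theorem~\ref{grintstable}(\ref{relsizelargeint}), exactly as in Remark~\ref{ubiquityBV}, using that $h\prec\Id$ forces $\Id^{n-1}h\prec\Id^n$, so $\hau^{\Id^{n-1}h}(V)=\infty$ for every nonempty open $V$.

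\emph{Main obstacle.} The delicate step is assumption~(\ref{existhunderline}): transferring the classical metric theorems through the cone chart $\Psi_i$ while respecting the restriction $q\in\mathcal{Q}_i$ imposed by~(\ref{existT}). Concretely, the near-integer value of $q_i$ produced by the chart must be forced to be the dominant coordinate of $q$; this is arranged by localizing $\Psi_i$ to a region where the retained coordinate is large and by discarding a Lebesgue-null set of exceptional $y$ (itself an $R$-type set in lower dimension), and one must also track carefully that the divergence hypothesis survives all the comparisons ``$\asymp$'' above — a routine but bookkeeping-heavy point that relies on $h\in\gauge_1$.
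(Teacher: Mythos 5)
Your reduction $R_{n,\nu}=\bigcap_j\Fcal_j$, your covering argument for the convergence half, the gauge--shrinking/enlarging manipulations, and your verification of hypothesis~(\ref{existT}) for $(H_q,r_q^{(j)})_{q\in\mathcal{Q}_i}$ with $T$ the $i$-th axis are all fine, and the overall strategy (Theorem~\ref{ubiquity2} plus Schmidt/Khintchine plus closure under countable intersections) is the right family of tools. The genuine gap is in the verification of hypothesis~(\ref{existhunderline}) for the $\mathcal{Q}_i$-restricted family, which you present as ``a classical Groshev-type limsup set'' after the chart $\Psi_i$. What the chart produces is: find infinitely many $q'\in\Z^{n-1}$ with $|q'\cdot y|_{\Z}<\psi(q')$ \emph{and} such that the nearest integer $q_i$ to $-q'\cdot y$ dominates, $|q_i|\geq\|q'\|_\infty$, i.e.\ $|q'\cdot y|\geq\|q'\|_\infty-1$ along the approximating sequence. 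Schmidt, Groshev and Khintchine control only the fractional part $|q'\cdot y|_{\Z}$, never the magnitude $|q'\cdot y|$, so the membership $q\in\mathcal{Q}_i$ is not delivered by the theorems you invoke. This is not a null-set issue: if $V$ lies in the open cone $\{\sum_{\ell\neq i}|\omega_\ell|<|\omega_i|\}$, then for every $\omega\in V$ and every $q\in\mathcal{Q}_i$ one has $|q\cdot\omega|\geq|q_i|\bigl(|\omega_i|-\sum_{\ell\neq i}|\omega_\ell|\bigr)\to\infty$, so hypothesis~(\ref{existhunderline}) fails identically on such $V$ for your fixed $i$. Your ``main obstacle'' paragraph points the wrong way: localizing to where the retained coordinate $\omega_i$ is \emph{large} makes things worse, and largeness of the coordinates of $y$ does not prevent cancellation in $q'\cdot y$. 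A correct repair needs $i$ chosen so that $|\omega_i|$ is (locally) the \emph{smallest} coordinate, together with a restriction of the support of $\psi$ to a sign pattern of $q'$ coherent with the signs of $y$ (so that $|q'\cdot y|=\sum_\ell|q'_\ell||y_\ell|\geq\|q'\|_\infty$), which Schmidt's theorem tolerates since the divergence survives the restriction; one must then patch these cone-by-cone conclusions into an arbitrary open $V$ via the $c$-adic/net-measure lemmas, none of which appears in your write-up.

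The paper avoids this difficulty by a different decomposition. Instead of applying Theorem~\ref{ubiquity2} directly in $\R^n$ to the hyperplanes $H_q$, it uses the bi-Lipschitz chart $f(x)=(x_1/x_n,\ldots,x_{n-1}/x_n,x_n)$ to show that $f(R_{n,\nu}\cap U)$ contains $(\tilde R_{n-1,\nu}\times(0,\infty))\cap f(U)$, where $\tilde R_{n-1,\nu}$ is the genuinely $(n-1)$-dimensional homogeneous linear-forms set $\{x:|q\cdot x|_{\Z}<\alpha\|q\|_1^{-\nu}\ \mathrm{i.m.}\ q\in\Z^{n-1}\}$ intersected over $\alpha$; there the nearest-integer variable is the unconstrained new integer coordinate, so no $\mathcal{Q}_i$-type dominance condition arises at this stage (the $\mathcal{Q}_i$ device is used only inside the proof of Theorem~\ref{grintSchmidt}, where it is harmless). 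Theorem~\ref{grintSchmidt} with $m=1$ puts $\tilde R_{n-1,\nu}$ in $\grint^{\Id^{n-2}\underline{h}}(\R^{n-1})$, the slicing Lemma~\ref{slicing} lifts this to the gauge $\Id^{n-1}h$ on $\R^{n-1}\times(0,\infty)$, and bi-Lipschitz invariance plus the $c$-adic patching over the two half-spaces gives membership in $\grint^{\Id^{n-1}h}(V)$ for arbitrary $V$. So your plan is salvageable but incomplete precisely at its crux; as written, the key full-measure hypothesis is asserted rather than proved, and on many open sets $V$ it is false for the fixed $i$ you work with.
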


The proof of this result being quite long, we postpone it to Section~\ref{proofthmRnnu} for the sake of clarity. The frequencies for which Siegel's Diophantine condition~(\ref{siegelcond}) does not hold for any $\gamma>0$ 
and any $\nu>0$, and thus for which Kolmogorov's construction fails, form the set
\[
R_{n}=\R^n\setminus\Omega_{n}=\bigcap_{\nu>0}\downarrow R_{n,\nu}.
\]
As shown by the following result, Theorem~\ref{thmRnnu} leads to a full description of the size and large intersection properties of the set $R_{n}$.

\begin{cor}\label{corRn}
Let us assume that $n\geq 2$. Let $h\in\gauge_{1}$ and let $V$ be a nonempty open subset of $\R^n$. Then,
\[
\left\{\begin{array}{rcl}
[\forall s>0 \quad h(r)\neq{\rm o}(r^s)] & \quad\Longrightarrow\quad & \hau^{\Id^{n-1}h}(R_{n}\cap V)=\infty \\[1mm]
[\exists s>0 \quad h(r)={\rm o}(r^s)] & \quad\Longrightarrow\quad & \hau^{\Id^{n-1}h}(R_{n}\cap V)=0. \\[1mm]
\end{array}\right.
\]
Moreover,
\[
R_{n}\in\grint^{\Id^{n-1}h}(V) \quad\Longleftrightarrow\quad [\forall s>0 \quad h(r)\neq{\rm o}(r^s)].
\]
\end{cor}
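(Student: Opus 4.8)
The plan is to first convert the dichotomy on $h$ into the series condition appearing in Theorem~\ref{thmRnnu}, and then to deduce Corollary~\ref{corRn} from that theorem by writing $R_{n}$ as a countable intersection of the sets $R_{n,\nu}$. The first step is the elementary equivalence
\[
[\,\forall s>0\quad h(r)\neq{\rm o}(r^s)\,]\qquad\Longleftrightarrow\qquad[\,\forall\nu>n-1\quad {\textstyle\sum_{q}}\,h(q^{-(\nu+1)/n})=\infty\,].
\]
Putting $\alpha=(\nu+1)/n$, which sweeps $(1,\infty)$ as $\nu$ sweeps $(n-1,\infty)$, the series $\sum_{q\geq 1}h(q^{-\alpha})$ is, by the integral test applied to the nonincreasing function $t\mapsto h(t^{-\alpha})$, comparable with $\int_{0}^{1}h(u)\,u^{-1-1/\alpha}\,{\rm d}u$; using that $h$ and $r\mapsto h(r)/r$ are monotone near zero, one checks directly that this integral is finite for some $\alpha>1$ if and only if $h(r)={\rm o}(r^{s})$ for some $s>0$, which by contraposition and the substitution $\alpha=(\nu+1)/n$ gives the displayed equivalence. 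I would also record that the family $R_{n,\nu}$ is nonincreasing in $\nu$ (because $\|q\|_{1}\geq 1$ for $q\in\Z^n\setminus\{0\}$), so that $R_{n}=\bigcap_{j\geq 1}R_{n,n-1+j}$ and $R_{n}\subseteq R_{n,\nu}$ for every $\nu>n-1$.

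For the divergence direction, assume $h(r)\neq{\rm o}(r^s)$ for all $s>0$. By the equivalence above and Theorem~\ref{thmRnnu}, each $R_{n,n-1+j}$ lies in $\grint^{\Id^{n-1}h}(V)$, so Theorem~\ref{grintstable}(\ref{intersect}) gives $R_{n}\in\grint^{\Id^{n-1}h}(V)$. To upgrade this to $\hau^{\Id^{n-1}h}(R_{n}\cap V)=\infty$ I would argue as in the proof of Theorem~\ref{grintSchmidt}: produce a smaller gauge $\check h\in\gauge_{1}$ (that is, $h\prec\check h$) for which $\sum_{q}\check h(q^{-(\nu+1)/n})=\infty$ still holds for every $\nu>n-1$, by adapting the construction of~\cite[Theorem~3.5]{Durand:2007lr}. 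The same reasoning then yields $R_{n}\in\grint^{\Id^{n-1}\check h}(V)$, and since $\Id^{n-1}h\prec\Id^{n-1}\check h$, Proposition~\ref{grintpropbase}(f) together with Theorem~\ref{grintstable}(\ref{relsizelargeint}) yields $\hau^{\Id^{n-1}h}(R_{n}\cap V)=\infty$.

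For the convergence direction, assume $h(r)={\rm o}(r^s)$ for some $s>0$. By the equivalence there is $\nu_{0}>n-1$ with $\sum_{q}h(q^{-(\nu_{0}+1)/n})<\infty$, whence $\hau^{\Id^{n-1}h}(R_{n,\nu_{0}}\cap V)=0$ by Theorem~\ref{thmRnnu}, and therefore $\hau^{\Id^{n-1}h}(R_{n}\cap V)=0$ since $R_{n}\subseteq R_{n,\nu_{0}}$. Suppose, for a contradiction, that $R_{n}\in\grint^{\Id^{n-1}h}(V)$. Again as in the proof of Theorem~\ref{grintSchmidt}, pick a larger gauge $\overline h\in\gauge_{1}$ with $\overline h\prec h$ and $\overline h(r)={\rm o}(r^{s'})$ for some $s'>0$ (a single gauge perturbation of the type in~\cite[Theorem~3.5]{Durand:2007lr}); then $\sum_{q}\overline h(q^{-(\nu_{1}+1)/n})<\infty$ for some $\nu_{1}>n-1$, so $\hau^{\Id^{n-1}\overline h}(R_{n}\cap V)\leq\hau^{\Id^{n-1}\overline h}(R_{n,\nu_{1}}\cap V)=0$ by Theorem~\ref{thmRnnu}. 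On the other hand $\Id^{n-1}\overline h\prec\Id^{n-1}h$, so Proposition~\ref{grintpropbase}(f) and Theorem~\ref{grintstable}(\ref{relsizelargeint}) would force $\hau^{\Id^{n-1}\overline h}(R_{n}\cap V)=\infty$ --- a contradiction. Hence $R_{n}\notin\grint^{\Id^{n-1}h}(V)$, and the two halves together prove the corollary.

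The main obstacle is the pair of gauge-perturbation steps: constructing a slightly smaller gauge $\check h$ that keeps all the series $\sum_{q}\check h(q^{-(\nu+1)/n})$, $\nu>n-1$, divergent, and a slightly larger gauge $\overline h$ that keeps one such series convergent. Both follow the template already used twice in the proof of Theorem~\ref{grintSchmidt}; the only genuinely new point is that in the divergence case countably many series must be kept divergent simultaneously, which is handled by a routine diagonalisation. Everything else is a direct application of Theorems~\ref{thmRnnu} and~\ref{grintstable}, Proposition~\ref{grintpropbase}, and the series--integral comparison of the first step.
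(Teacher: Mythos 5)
Your proposal is correct and follows essentially the same route as the paper: reduce to Theorem~\ref{thmRnnu} via the series criterion, write $R_{n}=\bigcap_{j}R_{n,\nu_{j}}$ with $\nu_{j}\to\infty$ and apply Theorem~\ref{grintstable}(\ref{intersect}), then perturb the gauge and invoke Theorem~\ref{grintstable}(\ref{relsizelargeint}) for the two Hausdorff-measure statements and the negative implication. The only (cosmetic) difference is that the paper takes $\overline{h}=\sqrt{h}$ in the convergence case and, in the divergence case, builds $\underline{h}$ with $h\prec\underline{h}$ and $\underline{h}(r)\neq{\rm o}(r^{s})$ for all $s>0$, so that all the series stay divergent automatically by the very equivalence you established in your first step, which makes your countable diagonalisation unnecessary.
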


\begin{proof}
Let us begin by assuming that $h(r)={\rm o}(r^s)$ for some $s>0$ and let us consider a positive real $\nu$ such that $\nu+1>n/s$. Then, the sum $\sum_{q}q^{-(\nu+1)s/n}$ converges and so does $\sum_{q} h(q^{-(\nu
+1)/n})$. By Theorem~\ref{thmRnnu}, the set $R_{n,\nu}$ has Haudsorff measure zero in $V$ for the gauge $\Id^{n-1}h$. As $R_{n}$ contains this last set, we deduce that $\hau^{\Id^{n-1}h}(R_{n}\cap V)=0$. Furthermore, 
using $\overline{h}=\sqrt{h}$ rather than $h$, we obtain $\hau^{\Id^{n-1}\overline{h}}(R_{n}\cap V)=0$. Hence, $R_{n}\not\in\grint^{\Id^{n-1}h}(V)$ by Theorem~\ref{grintstable}(\ref{relsizelargeint}).

Conversely, let us assume that $h(r)\neq {\rm o}(r^s)$ for all $s>0$. Let $\nu>0$ and suppose that $\sum_{q} h(q^{-(\nu+1)/n})<\infty$. Hence, the function $u\mapsto h(u^{-(\nu+1)/n})$ is integrable at infinity, so that 
for $r>0$ small enough,
\[
\int_{r^{-n/(\nu+1)}/2}^\infty h(u^{-(\nu+1)/n}) \,\dd u\geq\int_{r^{-n/(\nu+1)}/2}^{r^{-n/(\nu+1)}} h(u^{-(\nu+1)/n}) \,\dd u\geq \frac{h(r)}{2r^{n/(\nu+1)}}.
\]
As a result, $h(r)={\rm o}(r^{n/(\nu+1)})$ as $r\to\infty$, which is a contradiction. Thus, the set $R_{n,\nu}$ is in the class $\grint^{\Id^{n-1}h}(V)$ by Theorem~\ref{thmRnnu}. Due to the fact that $\nu\mapsto R_{n,\nu}$ 
is nonincreasing, the intersection defining $R_{n}$ may be written as the intersection over $j\in\N$ of the sets $R_{n,j}$. Therefore, Theorem~\ref{grintstable}(\ref{intersect}) ensures that $R_{n}\in\grint^{\Id^{n-1}h}(V)$. 
Furthermore, it is possible to build a gauge function $\underline{h}\in\gauge_{1}$ such that $h\prec\underline{h}$ and $\underline{h}(r)\neq{\rm o}(r^s)$ for all $s>0$. Using $\underline{h}$ instead of $h$ above, we 
obtain $R_{n}\in\grint^{\Id^{n-1}\underline{h}}(V)$. Theorem~\ref{grintstable}(\ref{relsizelargeint}) finally ensures that $\hau^{\Id^{n-1}h}(R_{n}\cap V)=\infty$.
\end{proof}

Corollary~\ref{corRn} shows that the set $R_{n}$ enjoys a large intersection property in the whole space $\R^n$ for any gauge function of the form $\Id^{n-1}h$, where $h$ grows faster than any power function near zero. 
As a result, the frequencies for which Kolmogorov's construction fails are omnipresent in $\R^n$ in a strong measure theoretic sense. Moreover, a straightforward consequence of Corollary~\ref{corRn} is that the Hausdorff dimension of the set $R_{n}$ is equal to $n-1$. Thus, the frequencies for which Kolmogorov's construction is impossible form a set with almost maximal dimension when the number of degrees of freedom of the system is large.

Finally, let us mention that Siegel's Diophantine condition~(\ref{siegelcond}) also arises in the study of the long-time behavior of symplectic discretizations of integrable Hamiltonian systems (or perturbations of such 
systems). For example, M.~Calvo and E.~Hairer~\cite{Calvo:1995rz} established that the global error of a symplectic numerical integrator on an integrable system grows at most linearly when the frequency at the initial value 
enjoys~(\ref{siegelcond}). Due to Corollary~\ref{corRn}, the set $R_{n}$ of all points for which~(\ref{siegelcond}) does not hold for any $\gamma>0$ and any $\nu>0$ is a set with large intersection and has almost maximal 
Hausdorff dimension in $\R^n$. Thus, the frequencies for which the error growth may not be linear are in some sense prominent in $\R^n$. We refer to~\cite[Chapter~X]{Hairer:2006rm} for other occurrences of Siegel's 
Diophantine condition in the study of symplectic integrators.

\subsection{Rotation number of a homeomorphism of the circle}\label{rotnum}

The study of the continuous orientation preserving homeomorphisms of the circle $\cerc^1=\R/\Z$ yields another application of the results of the previous sections. The rotation number of such a homeomorphism quantifies how much, on average, it moves the points of the circle. It is in fact more convenient to work with lifts of homeomorphisms. Thus, following J.-C.~Yoccoz~\cite{Yoccoz:1992rz}, we shall work with the group $D^0(\cerc^1)$ composed by the continuous homeomorphisms $f$ of $\R$ for which the mapping $x\mapsto f(x)-x$ has period one. For any such function $f\in D^0(\cerc^1)$, the sequence $(f^{\circ q}(x)-x)/q$ converges uniformly in $x$ as $q\to\infty$ to a constant limit $\rho(f)$ called the rotation number of $f$. Here, $f^{\circ q}$ denotes the $q$-fold iteration $f\circ\ldots\circ f$. It is straightforward to check that, for any real $\rho$, the translation $r_{\rho}:x\mapsto x+\rho$ belongs to $D^0(\cerc^1)$ and has rotation number $\rho$.

Alternate definitions of the rotation number and several of its important properties are given in~\cite{Yoccoz:1992rz}. In particular, the rotation number $\rho(f)$ of a given function $f$ in $D^0(\cerc^1)$ is rational if and 
only if the homeomorphism $\tilde f$ of $\cerc^1$ induced by $f$ admits a periodic point. Moreover, if $\rho(f)$ is irrational, then the closure of every orbit of $\tilde f$ is equal to either the whole circle $\cerc^1$ or a 
common Cantor subset ({\em i.e.}~compact, totally disconnected and with no isolated point) of $\cerc^1$. In the first case, $f$ is {\em topologically conjugate} to $r_{\rho(f)}$, that is, there exists a homeomorphism $\phi
\in D^0(\cerc^1)$ such that $\phi\circ f=r_{\rho(f)}\circ\phi$. As shown by Denjoy, this always happens when $f$ is a $C^2$-diffeomorphism and this property is optimal in the sense that, for any $\eps>0$, there exists a 
$C^{2-\eps}$-diffeomorphism with irrational rotation number which is not topologically conjugate to $r_{\rho(f)}$.

Let $f$ be such a diffeomorphism. A further question is that of the smoothness of the conjugacy between $f$ and $r_{\rho(f)}$. The existence of a smooth conjugacy function $\phi$ has been investigated by Moser and 
M.~Herman and is related, in an optimal manner, with the fact that $\rho(f)$ is of {\em Diophantine type} $(K,\sigma)$ for some positive reals $K$ and $\sigma$, which means that
\[
\forall q\in\N \quad \forall p\in\Z \qquad \left| \rho(f)-\frac{p}{q} \right|\geq\frac{K}{q^{\sigma+2}},
\]
see~\cite[Section~2.3]{Yoccoz:1992rz} for details. The results of the preceding sections enable us to study, for any a fixed real $\sigma>0$, the size and large intersection properties of the set $L_{\sigma}$ of all irrational 
numbers that are not of Diophantine type $(K,\sigma)$ for any $K>0$, and thus for which the smoothness results fail. Note that
\[
L_{\sigma}=\bigcap_{K>0}\downarrow\left\{ \rho\in\R\setminus\Q \:\Biggl|\: \left|\rho-\frac{p}{q}\right|<\frac{K}{q^{\sigma+2}}\text{ for some }(p,q)\in\Z\times\N \right\}.
\]
In spite of the fact that it has Lebesgue measure zero, this set may be considered as large in various senses. Indeed, $L_{\sigma}$ is a dense $G_{\delta}$-subset of $\R$. Moreover, V.~Bernik and Dodson~\cite{Bernik:1999qr} proved that the Hausdorff dimension of $L_{\sigma}$ is equal to $2/(2+\sigma)$, thereby being almost maximal in $\R$ when $\sigma$ is small. In addition, as shown by Theorem~\ref{thmLsigma} below, this set 
also enjoys a large intersection property and may thus be seen as omnipresent in $\R$ in a strong measure theoretic sense. Note that this theorem also extends Bernik and Dodson's result by providing a full description of 
the size properties of the set $L_{\sigma}$.

\begin{thm}\label{thmLsigma}
Let $h\in\gauge_{1}$, let $V$ be a nonempty open subset of $\R$ and let $\sigma>0$. Then,
\[
\left\{\begin{array}{rcl}
\sum_{q} h(q^{-(2+\sigma)/2})=\infty & \quad\Longrightarrow\quad & \hau^h(L_{\sigma}\cap V)=\infty \\[1mm]
\sum_{q} h(q^{-(2+\sigma)/2})<\infty & \quad\Longrightarrow\quad & \hau^h(L_{\sigma}\cap V)=0.
\end{array}\right.
\]
Moreover,
\[
L_{\sigma}\in\grint^h(V) \quad\Longleftrightarrow\quad \sum\nolimits_{q} h(q^{-(2+\sigma)/2})=\infty.
\]
\end{thm}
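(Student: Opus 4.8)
The plan is to reduce the statement about $L_{\sigma}$ to the already-proven results on sets of the form $S^b_{m,n,\psi}$ (Theorem~\ref{grintSchmidt}) or, equivalently, to the Khintchine-type set $K_{\phi}$ (Theorem~\ref{grintkhintchine} with $m=1$), together with the countable-intersection stability of the classes $\grint^h(\R)$ given by Theorem~\ref{grintstable}(\ref{intersect}). Write $L_{\sigma}=\bigcap_{j\in\N}L_{\sigma,j}$, where
\[
L_{\sigma,j}=\left\{ \rho\in\R\setminus\Q \:\Biggl|\: \left|\rho-\frac{p}{q}\right|<\frac{1}{j\,q^{\sigma+2}}\text{ for infinitely many }(p,q)\in\Z\times\N \right\},
\]
using that the defining condition ``for some $(p,q)$, for every $K>0$'' is equivalent to ``for infinitely many $(p,q)$'' once we intersect over $j$, and that $\bigcap_j$ automatically lands in $\R\setminus\Q$ since every rational fails the Diophantine inequality for small $K$. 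Each $L_{\sigma,j}$ differs from $K_{\phi_j}$ with $\phi_{j}(q)=1/(j\,q^{\sigma+2})$ only by the countable set $\Q$, hence $L_{\sigma,j}$ contains the $G_{\delta}$-set $K_{\phi_j}\setminus\Q$ (still a $G_\delta$, being the intersection of a $G_\delta$ with the $G_\delta$-set $\R\setminus\Q$). First I would observe that for any $h\in\gauge_1$, the series $\sum_q h(\phi_j(q))\,q$ and $\sum_q h(q^{-(2+\sigma)/2})$ share the same convergence behaviour: indeed $h\in\gauge_1$ means $r\mapsto h(r)/r$ is nonincreasing, so $h(\phi_j(q))\,q$ is comparable (up to the constant $j$ absorbed by monotonicity and summation) to $h(q^{-\sigma-2})\,q$, and a change of the summation variable $q\mapsto q^{1/2}$ together with the standard Cauchy-condensation/comparison argument shows $\sum_q h(q^{-\sigma-2})\,q \asymp \sum_q h(q^{-(2+\sigma)/2})$. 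This is the one genuinely computational point and I expect it to be the main obstacle: one must be careful that $h$ is only assumed nondecreasing with $h(r)/r$ nonincreasing, so the comparison has to be run through the monotonicity of both factors rather than any smoothness of $h$.

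Granting this equivalence, the divergence direction goes as follows. If $\sum_q h(q^{-(2+\sigma)/2})=\infty$, then for every $j$ we have $\sum_q h(\phi_j(q))\,q=\infty$, so by Theorem~\ref{grintkhintchine} (the $m=1$ case of Theorem~\ref{grintSchmidt}) the set $K_{\phi_j}$ belongs to $\grint^h(V)$; since $\R\setminus\Q$ is a $G_\delta$-set of full Lebesgue measure it lies in $\grint^h(V)$ by Theorem~\ref{grintstable}(\ref{fullleb}), and intersecting, $K_{\phi_j}\cap(\R\setminus\Q)\in\grint^h(V)$ by Theorem~\ref{grintstable}(\ref{intersect}). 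This set is contained in $L_{\sigma,j}$, which is a $G_\delta$-set, so $L_{\sigma,j}\in\grint^h(V)$ by Proposition~\ref{grintpropbase}(\ref{propbaseextension}). Applying Theorem~\ref{grintstable}(\ref{intersect}) once more over $j\in\N$ yields $L_{\sigma}\in\grint^h(V)$. To get the measure statement $\hau^h(L_\sigma\cap V)=\infty$, I would (as in Remark~\ref{ubiquityBV} and the proof of Corollary~\ref{corRn}) build a gauge $\underline h\in\gauge_1$ with $h\prec\underline h$ for which $\sum_q \underline h(q^{-(2+\sigma)/2})$ still diverges — possible by adapting the construction of~\cite[Theorem~3.5]{Durand:2007lr} — conclude $L_\sigma\in\grint^{\underline h}(V)$, and invoke Theorem~\ref{grintstable}(\ref{relsizelargeint}) with $\overline g=h$, $g=\underline h$ to obtain $\hau^h(L_\sigma\cap U)=\hau^h(U)$ for every open $U\subseteq V$; since $h\prec\Id$ forces $\hau^h(U)=\infty$ on nonempty open $U$ (when $h\not\prec\Id$ the series converges so this case is vacuous), we are done.

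For the convergence direction, suppose $\sum_q h(q^{-(2+\sigma)/2})<\infty$, equivalently $\sum_q h(\phi_1(q))\,q<\infty$. A direct covering argument suffices and is quicker than citing Theorem~\ref{grintkhintchine}: for $\rho\in L_{\sigma}\cap V$ there are infinitely many $(p,q)$ with $|\rho-p/q|<q^{-\sigma-2}$, and bounding the number of relevant $p$ for each $q$ by $O(q)$ (since $V$ is bounded, or restricting to $V$ first) one covers $L_\sigma\cap V$, for each $Q$, by intervals of diameter $2q^{-\sigma-2}$, $q\ge Q$, giving $\hau^h_\delta(L_\sigma\cap V)\lesssim \sum_{q\ge Q} h(2q^{-\sigma-2})\,q\to 0$ as $Q\to\infty$, using $h\in\gauge_1$ to absorb the factor $2$. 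Hence $\hau^h(L_\sigma\cap V)=0$, and Theorem~\ref{grintstable}(\ref{relsizelargeint}) (applied with $\overline h=\sqrt h\prec h$, noting $\sum_q\sqrt{h}(q^{-(2+\sigma)/2})<\infty$ as well by the same comparison) rules out $L_\sigma\in\grint^h(V)$. Combining the two directions gives all three assertions. Alternatively, one could deduce everything in one stroke from Theorem~\ref{grintSchmidt} by noting $K_{\phi_j}=S^0_{1,1,\psi_j}$ with $\psi_j(q)=\ind_{\{q\ge1\}}/(j\,q^{\sigma+1})$, but since $m=n=1$ lies outside the hypothesis $m+n>2$ of Theorem~\ref{grintSchmidt}, the route through Theorem~\ref{grintkhintchine} (whose proof in~\cite{Durand:2007uq} handles $m=1$ directly via Theorem~\ref{ubiquity} and Dirichlet's theorem) is the clean one.
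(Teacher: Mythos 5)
Your overall route is the paper's own: decompose $L_{\sigma}$ into the sets $K_{\phi_j}$ with $\phi_j(q)=1/(j\,q^{\sigma+2})$ intersected with the full-measure $G_\delta$-set $\R\setminus\Q$, apply Theorem~\ref{grintkhintchine} for the divergence part, use Theorem~\ref{grintstable}(\ref{intersect}) and (\ref{fullleb}) to intersect, and pass to auxiliary gauges $\underline{h}$, $\overline{h}$ plus Theorem~\ref{grintstable}(\ref{relsizelargeint}) to convert large intersection statements into the two Hausdorff-measure statements. The divergence direction and the measure-zero covering argument are fine (the series comparison $\sum_q h(\phi_j(q))\,q\asymp\sum_q h(q^{-(2+\sigma)/2})$, which you flag as the main obstacle, is indeed routine via monotonicity of $h$ and of $h(r)/r$ and an integral comparison, exactly as the paper treats it).

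There is, however, a genuine gap in your convergence direction, at the step ruling out $L_{\sigma}\in\grint^h(V)$. You take $\overline{h}=\sqrt{h}$ and claim $\sum_q\sqrt{h}\bigl(q^{-(2+\sigma)/2}\bigr)<\infty$ ``by the same comparison''. This is false in general: convergence of $\sum_q a_q$ does not give convergence of $\sum_q\sqrt{a_q}$. Concretely, take $h=\Id$ and any $\sigma\in(0,2]$: then $\sum_q q^{-(2+\sigma)/2}<\infty$ (so you are in the convergence case and must disprove membership in $\grint^{\Id}(V)$), yet $\sum_q q^{-(2+\sigma)/4}=\infty$; by the divergence part of the theorem one even has $\hau^{\sqrt{h}}(L_\sigma\cap V)=\infty$, so no covering argument with $\sqrt{h}$ can succeed, and your contradiction evaporates. (A similar example, $h=\Id^{4/(2+\sigma)}$, works for $\sigma\geq 2$.) The repair is what the paper does: one must \emph{construct} a gauge $\overline{h}\in\gauge_1$ with $\overline{h}\prec h$ that stays close enough to $h$ for $\sum_q\overline{h}(q^{-2-\sigma})\,q$ to remain convergent (e.g.\ of the type $h(r)$ times a slowly diverging factor, obtained by adapting~\cite[Theorem~3.5]{Durand:2007lr} — the same construction you already invoke on the divergence side for $\underline{h}$); then $\hau^{\overline{h}}(L_\sigma\cap V)=0$ and Theorem~\ref{grintstable}(\ref{relsizelargeint}) excludes $L_\sigma$ from $\grint^h(V)$. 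A minor side point: your remark that ``$\bigcap_j$ automatically lands in $\R\setminus\Q$'' is backwards — every rational lies in each set $\tilde L_{\sigma,j}$ (it is at distance $0$ from itself), which is precisely why the intersection with $\R\setminus\Q$ must be imposed explicitly, as you in fact do in your definition of $L_{\sigma,j}$, so this does not affect the argument.
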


\begin{proof}
To begin with, observe that the set $L_{\sigma}$ is the intersection of $\R\setminus\Q$ with the intersection over $j\in\N$ of the sets
\[
\tilde L_{\sigma,j}=\left\{ \rho\in\R \:\Biggl|\: \left|\rho-\frac{p}{q}\right|<\frac{1}{j\, q^{\sigma+2}}\text{ for infinitely many }(p,q)\in\Z\times\N \right\}.
\]

Let us assume that the sum $\sum_{q} h(q^{-(2+\sigma)/2})$ converges. Then, $\sum_{q} h(q^{-2-\sigma})q$ converges as well and, by Theorem~\ref{grintkhintchine}, the set $\tilde L_{\sigma,1}$ has Hausdorff measure 
zero for the gauge function $h$. As a result, $\hau^h(L_{\sigma}\cap V)=0$. Moreover, there is a gauge function $\overline{h}\in\gauge_{1}$ enjoying $\overline{h}\prec h$ such that the sum $\sum_{q}\overline{h}(q^{-2-
\sigma})q$ converges. Applying what precedes with $\overline{h}$ instead of $h$, we deduce that the set $L_{\sigma}$ has Hausdorff measure zero in $V$ for the gauge function $\overline{h}$. Hence, it cannot belong to 
the class $\grint^h(V)$, owing to Theorem~\ref{grintstable}(\ref{relsizelargeint}).

Let us suppose that the sum $\sum_{q} h(q^{-(2+\sigma)/2})$ diverges. Then, for each $j\in\N$, the sum $\sum_{q} h(1/(j\,q^{\sigma+2}))q$ diverges too. Thanks to Theorem~\ref{grintkhintchine}, the set $\tilde 
L_{\sigma,j}$ belongs to the class $\grint^h(V)$. This is true for any $j\in\N$, so that Theorem~\ref{grintstable}(\ref{intersect}) ensures that this class contains the intersection over $j\in\N$ of the sets $\tilde L_{\sigma,j}$. 
In addition, the set $\R\setminus\Q$ of irrational numbers, being a $G_{\delta}$-subset of $\R$ with full Lebesgue measure, belongs to the class $\grint^h(V)$ as well, owing to Theorem~\ref{grintstable}(\ref{fullleb}). By Theorem~\ref{grintstable}(\ref{intersect}) again, this class finally contains the set $L_{\sigma}$. Furthermore, note that $h\prec\Id$. Thus, there is a gauge function $\underline{h}\in\gauge_{1}$ such that $h\prec\underline{h}$ and the sum $\sum_{q}\underline{h}(q^{-(2+\sigma)/2})$ diverges. Using $\underline{h}$ rather than $h$ above, we obtain $L_{\sigma}\in\grint^{\underline{h}}(V)$. By virtue of Theorem~\ref{grintstable}(\ref{relsizelargeint}), we finally get $\hau^h(L_{\sigma}\cap V)=\infty$.
\end{proof}

To conclude this section, let us mention that the intersection, denoted by $L$, of the sets $L_{\sigma}$ over $\sigma>0$ is the set of Liouville numbers. It is well-known that this set has Hausdorff dimension zero and is a 
dense $G_{\delta}$-subset of $\R$. L.~Olsen~\cite{Olsen:2005fk} established that, for any gauge function $h\in\gauge_{1}$, the set $L$ has Hausdorff measure zero if $h(r)={\rm o}(r^s)$ as $r\to 0$ for some $s>0$ and 
infinite Hausdorff measure in every nonempty open subset of $\R$ otherwise. The following proposition complements this result by describing the large intersection properties of $L$.

\begin{prp}
Let $h\in\gauge_{1}$ and let $V$ be a nonempty open subset of $\R$. Then,
\[
L\in\grint^h(V) \qquad\Longleftrightarrow\qquad [\forall s>0 \quad h(r)\neq{\rm o}(r^s)].
\]
\end{prp}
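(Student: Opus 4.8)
The plan is to mimic the structure of the proof of Theorem~\ref{thmLsigma}, using the fact that $L=\bigcap_{\sigma>0}\downarrow L_{\sigma}$ can be rewritten as a countable intersection. Since $\sigma\mapsto L_{\sigma}$ is nonincreasing, we have $L=\bigcap_{k\in\N}L_{k}$, and each $L_{k}$ is a $G_{\delta}$-subset of $\R$ by the argument in the proof of Theorem~\ref{thmLsigma} (being the intersection of $\R\setminus\Q$ with countably many $G_{\delta}$-sets $\tilde L_{k,j}$).

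First I would treat the convergence direction. Suppose $h(r)={\rm o}(r^s)$ for some $s>0$. I would pick any $\sigma>0$ large enough that $\sum_{q} h(q^{-(2+\sigma)/2})$ converges: indeed, since $h(r)={\rm o}(r^s)$, for $\sigma$ with $(2+\sigma)s/2>1$ the terms are eventually dominated by $q^{-(2+\sigma)s/2}$, giving a convergent series. By Theorem~\ref{thmLsigma}, $\hau^h(L_{\sigma}\cap V)=0$, and since $L\subseteq L_{\sigma}$, also $\hau^h(L\cap V)=0$. To rule out $L\in\grint^h(V)$, I would repeat the argument with $\overline{h}=\sqrt{h}$ in place of $h$ (noting $\overline{h}\in\gauge_{1}$, $\overline{h}\prec h$, and $\overline{h}(r)={\rm o}(r^{s/2})$), obtaining $\hau^{\overline{h}}(L\cap V)=0$, which contradicts Theorem~\ref{grintstable}(\ref{relsizelargeint}); hence $L\notin\grint^h(V)$.

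Next the divergence direction. Suppose $h(r)\neq{\rm o}(r^s)$ for all $s>0$. For each fixed $k\in\N$, I claim $\sum_{q} h(q^{-(2+k)/2})$ diverges: this is exactly the computation carried out in the proof of Corollary~\ref{corRn} (with $\nu+1$ there playing the role of $(2+k)/2$ here), where convergence of such a series forces $h(r)={\rm o}(r^{\text{power}})$, a contradiction. Hence, by Theorem~\ref{thmLsigma}, $L_{k}\in\grint^h(V)$ for every $k\in\N$. Since $\grint^h(V)$ is closed under countable intersections by Theorem~\ref{grintstable}(\ref{intersect}), and $L=\bigcap_{k\in\N}L_{k}$, we conclude $L\in\grint^h(V)$.

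The only genuinely nontrivial ingredient is the elementary real-analysis lemma that $h(r)\neq{\rm o}(r^s)$ for all $s>0$ is equivalent to divergence of $\sum_q h(q^{-t})$ for every $t>0$ — but this is precisely what is already established inside the proof of Corollary~\ref{corRn} (via the integral comparison $\int_{r^{-1/t}/2}^{\infty} h(u^{-t})\,\dd u\geq \tfrac{1}{2} r^{1/t} h(r)$ for small $r$), so I would simply invoke that computation. Everything else is a direct assembly of Theorem~\ref{thmLsigma}, Theorem~\ref{grintstable}(\ref{intersect}), and Theorem~\ref{grintstable}(\ref{relsizelargeint}), exactly parallel to how Corollary~\ref{corRn} is deduced from Theorem~\ref{thmRnnu}. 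I do not anticipate any real obstacle; the main point to be careful about is that the equivalence ``$\forall s>0,\ h(r)\neq{\rm o}(r^s)$'' must be applied uniformly across all the parameters $(2+k)/2$, $k\in\N$, which it does since those values are simply a sequence of positive reals.
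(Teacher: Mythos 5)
Your proof is correct, and it is the natural in-paper argument: the paper itself only refers to~\cite{Durand:2007uq} for this proposition, but your deduction from Theorem~\ref{thmLsigma} combined with Theorem~\ref{grintstable}(\ref{intersect}) and~(\ref{relsizelargeint}) mirrors exactly how Corollary~\ref{corRn} is obtained from Theorem~\ref{thmRnnu}, and every step checks out (including that $\sqrt{h}\in\gauge_{1}$, $\sqrt{h}\prec h$, and that $L=\bigcap_{k\in\N}L_{k}$ by monotonicity in $\sigma$). The only slip is typographical: the integral comparison gives the lower bound $\tfrac{1}{2}\,r^{-1/t}h(r)$, not $\tfrac{1}{2}\,r^{1/t}h(r)$, so that convergence of $\sum_{q}h(q^{-t})$ forces $h(r)={\rm o}(r^{1/t})$ as $r\to 0$, which is the contradiction you invoke.
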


We refer to~\cite{Durand:2007uq} for a proof of this proposition. A noteworthy consequence of this result is that there are uncountably many ways of writing a given real number as the sum of two Liouville numbers. This is a generalization of a classical result of Erd\H{o}s~\cite{Erdos:1962uq} which states that every real number may be written as the sum of two Liouville numbers.

\section{Proof of Theorem~\ref{ubiquity2}}\label{proofubiquity2}

Let us first assume that $k=0$. Then, for any $i\in I$, there is a point $x_{i}\in\R^d$ such that $P_{i}=\{x_{i}\}$. Note that~(\ref{existT}) is always satisfied, as $T$ may be chosen to be equal to $\R^d$, and that $(x_{i},r_{i})_{i\in I}\in\S_{d}(I)$, because $(P_{i},r_{i})_{i\in I}\in\S_{d}^0(I)$. Moreover, if~(\ref{existhunderline}) holds, then there is a gauge function $\underline{h}\in\gauge_{d}$ with $h\prec\underline{h}$ such that for Lebesgue-almost every $x\in V$, there are infinitely many indices $i\in I$ enjoying $\|x-x_{i}\|<\underline{h}(r_{i})^{1/d}$. Theorem~\ref{ubiquity} implies that $\Fcal_{(P_{i},r_{i})_{i\in I}}$, being equal to $\Fcal_{(x_{i},r_{i})_{i\in I}}$, belongs to the class $\grint^{\underline{h}}(V)$, which is included in the class $\grint^h(V)$ by Proposition~\ref{grintpropbase}.

From now on, let us assume that $k\geq 1$. The proof of Theorem~\ref{ubiquity2} calls upon the following lemma, which can be seen as the analog for net measures of the ``slicing'' lemma of~\cite{Beresnevich:2006lr}, which itself follows from an extension of the 
first part of~\cite[Theorem~10.10]{Mattila:1995fk}. In order to state our slicing lemma, we need to introduce the following notations. For any subset $E$ of $\R^d$ and any $x_{2}\in\R^k$, let
\begin{equation}\label{defEx2}
E_{x_{2}}=\{ x_{1}\in\R^{d-k} \:|\: (x_{1},x_{2})\in E \}.
\end{equation}
Moreover, let $E^*$ denote the set of all $x_{2}\in\R^k$ such that $E_{x_{2}}\neq\emptyset$. Observe that $E$ is the collection of all $(x_{1},x_{2})\in\R^{d-k}\times\R^k$ enjoying $x_{2}\in E^*$ and $x_{1}\in 
E_{x_{2}}$.

\begin{lem}[slicing for net measures]\label{slicing}
Let $h\in\gauge_{d-k}$, let $W$ be an open subset of $\R^d$ and let $E$ denote a subset of $\R^d$. Assume that there exist a real $\kappa>0$ and a subset $W'$ of $W^*$ with full Lebesgue measure such that
\begin{equation}\label{slicinghyp}
\forall x_{2}\in W' \qquad \forall U\subseteq W_{x_{2}}\text{ open} \qquad \netm^h_{\infty}(E_{x_{2}}\cap U)\geq\kappa\,\netm^h_{\infty}(U).
\end{equation}
Then, there exists a real $\kappa'>0$ such that
\[
\forall  U\subseteq W\text{ open} \qquad \netm^{\Id^k h}_{\infty}(E\cap U)\geq\kappa'\,\netm^{\Id^k h}_{\infty}(U).
\]
\end{lem}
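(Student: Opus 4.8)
The plan is to deduce the net-measure slicing lemma from a Fubini-type lower bound for $\netm^{\Id^k h}_\infty$ together with the hypothesis~(\ref{slicinghyp}) applied slice by slice. First I would fix an open set $U\subseteq W$ and a countable cover of $E\cap U$ by $c$-adic cubes $(\lambda_p)_{p\geq 1}$, each of diameter less than $\eps_{\Id^k h}$, that is nearly optimal for $\netm^{\Id^k h}_\infty(E\cap U)$. Since we work with $c$-adic cubes in $\R^d=\R^{d-k}\times\R^k$, each $\lambda_p$ is a product $\mu_p\times\nu_p$ of a $c$-adic cube $\mu_p\subseteq\R^{d-k}$ and a $c$-adic cube $\nu_p\subseteq\R^k$ of the same generation, with $\diam{\mu_p}$ comparable (up to the fixed norm-dependent constants, which the classes $\grint$ are insensitive to by the Remark after the definition) to $\diam{\nu_p}$ and to $\diam{\lambda_p}$. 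Grouping the cubes $\lambda_p$ according to their projection $\nu$ onto $\R^k$, for each such $\nu$ the cubes $\mu_p$ with $\mu_p\times\nu$ appearing in the cover form a cover of $(E\cap U)_{x_2}=E_{x_2}\cap U_{x_2}$ for every $x_2\in\nu$ (here $U_{x_2}$ is open in $W_{x_2}$), whence $\sum_{p:\nu_p=\nu} h(\diam{\mu_p})\geq\netm^h_\infty(E_{x_2}\cap U_{x_2})\geq\kappa\,\netm^h_\infty(U_{x_2})$ for $x_2\in\nu\cap W'$, using~(\ref{slicinghyp}).

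Next I would integrate this over $x_2$. Writing $g=\Id^k h$, the point is that for a $c$-adic product cube $\mu\times\nu$ one has, up to multiplicative constants, $g(\diam{\mu\times\nu})\asymp\diam{\nu}^k\,h(\diam{\mu})=\leb^k(\nu)\,h(\diam{\mu})$, so that
\[
\sum_{p\geq 1} g(\diam{\lambda_p})\asymp\sum_{\nu}\leb^k(\nu)\sum_{p:\nu_p=\nu}h(\diam{\mu_p})\geq\kappa\sum_{\nu}\int_{\nu\cap W'}\netm^h_\infty(U_{x_2})\,\dd x_2=\kappa\int_{W'}\netm^h_\infty(U_{x_2})\,\dd x_2,
\]
where the last equality uses that the $\nu$'s are disjoint and, by the choice of the cover, together contain $(E\cap U)^*\subseteq W^*$ up to a $\leb^k$-null set — here I would be slightly careful and instead bound $\int_{\nu\cap W'}\leq\sum_{p:\nu_p=\nu}\int_{\mu_p\text{-slice}}$, or simply note that whenever $x_2\in W'$ and $U_{x_2}\neq\emptyset$ the slice $(E\cap U)_{x_2}$ is covered, so the $\nu$'s containing such $x_2$ exhaust $W'\cap\{x_2:U_{x_2}\neq\emptyset\}$ modulo a null set. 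Finally I would bound $\int_{W'}\netm^h_\infty(U_{x_2})\,\dd x_2$ from below by a constant times $\netm^{\Id^k h}_\infty(U)$: this is the reverse Fubini inequality for net measures, which follows by the same product-cube computation applied to an optimal cover of $U$ (covering $U$ by $\lambda_q=\mu_q\times\nu_q$ gives $\netm^h_\infty(U_{x_2})\leq\sum_{q:x_2\in\nu_q}h(\diam{\mu_q})$, and integrating in $x_2$ returns $\asymp\sum_q g(\diam{\lambda_q})$). Combining the two estimates and taking the infimum over covers of $E\cap U$ yields $\netm^{\Id^k h}_\infty(E\cap U)\geq\kappa'\,\netm^{\Id^k h}_\infty(U)$ with $\kappa'$ a fixed multiple of $\kappa$ depending only on $c$, $d$, $k$ and the norm.

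The main obstacle I expect is the measurability and bookkeeping around the sets $W'$ and the slices: one must ensure that $x_2\mapsto\netm^h_\infty(U_{x_2})$ is measurable enough for the integrals to make sense (net measures of open slices are lower semicontinuous in a suitable sense, so this is manageable but needs a word), that replacing $W$ by an arbitrary open $U\subseteq W$ does not spoil the full-measure condition on $W'$ — it does not, since $W'\cap U^*$ still has full measure in $U^*$ and~(\ref{slicinghyp}) was assumed for all open $U\subseteq W_{x_2}$ — and that the passage from a cover of $E\cap U$ to covers of the slices is valid even though a single $x_2$ may lie in infinitely many cubes $\nu_p$ of wildly varying generations. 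The cleanest way around the last issue is to not fix $\nu$ but to argue directly: for $\leb^k$-a.e. $x_2\in U^*$, the subfamily $\{\mu_p:x_2\in\nu_p\}$ covers $E_{x_2}\cap U_{x_2}$ with diameters $<\eps_h$, so $\netm^h_\infty(E_{x_2}\cap U_{x_2})\leq\sum_{p:x_2\in\nu_p}h(\diam{\mu_p})$, then integrate in $x_2$ and use $\int_{\R^k}\ind_{\nu_p}(x_2)\,\dd x_2=\leb^k(\nu_p)\asymp\diam{\lambda_p}^k$ together with Tonelli to interchange sum and integral. This avoids grouping by $\nu$ entirely and is the version I would write up.
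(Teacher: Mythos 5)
Your first half is fine and is essentially the computation the paper itself performs: slice a cover of $E\cap U$ by $c$-adic product cubes, apply~(\ref{slicinghyp}) to almost every slice, and integrate with Tonelli, using $\leb^k(\nu_p)\asymp\diam{\lambda_p}^k$. The genuine gap is the last step. You need, for an \emph{arbitrary} open $U\subseteq W$, the lower bound $\int_{W'}\netm^h_{\infty}(U_{x_{2}})\,\dd x_{2}\geq c\,\netm^{\Id^k h}_{\infty}(U)$, but the justification you sketch (cover $U$ by product cubes $\mu_q\times\nu_q$, bound $\netm^h_\infty(U_{x_2})\leq\sum_{q:x_2\in\nu_q}h(\diam{\mu_q})$, integrate, take the infimum over covers) proves exactly the \emph{reverse} inequality, namely $\int\netm^h_{\infty}(U_{x_{2}})\,\dd x_{2}\leq C\,\netm^{\Id^k h}_{\infty}(U)$. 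Worse, the inequality you actually need fails with any uniform constant: take $d=2$, $k=1$, $h=\Id^{1/2}$, and $U=V\times(0,\eta)$ where $V$ is the union of $\eta^{-3/2}$ intervals of length $\eta^{2}$ equally spaced in $(0,1)$. A mass-distribution argument shows $\netm^{h}_{\infty}(U_{x_{2}})=\netm^{h}_{\infty}(V)\asymp 1$ for every $x_{2}\in(0,\eta)$, so the integral is $\asymp\eta$, whereas $\netm^{\Id\,h}_{\infty}(U)=\netm^{\Id^{3/2}}_{\infty}(U)\asymp\eta^{1/2}$ (upper bound by $\lceil 1/\eta\rceil$ squares of side $\eta$, lower bound by the product of the natural measure on $V$ with normalized length on $(0,\eta)$). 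Hence the quantity $\int\netm^h_\infty(U_{x_2})\,\dd x_2$ through which you route the estimate can be arbitrarily smaller than $\netm^{\Id^k h}_\infty(U)$, and your chain cannot yield a uniform $\kappa'$.

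The way the paper avoids this is precisely by not slicing a general open $U$: it first invokes the net-measure machinery of~\cite{Durand:2007uq} (Lemmas~8, 9 and~10 there) to reduce the conclusion to showing $\sum_p g(\diam{\lambda_p})\geq\kappa' g(\diam{\lambda})$ for a single $c$-adic cube $\lambda=\lambda^{(1)}\times\lambda^{(2)}\subseteq W$ of diameter less than a suitable $\rho\leq\eps_g$ and a \emph{disjoint} family of $c$-adic cubes $\lambda_p\subseteq\lambda$ covering $E\cap\lambda$. For $U=\interior{\lambda}$ the problematic ``reverse Fubini'' input degenerates to the identity $\netm^h_{\infty}(\interior{\lambda^{(1)}})=h(\diam{\lambda^{(1)}})$ (Lemma~9) together with $\leb^k(\lambda^{(2)})\asymp\diam{\lambda}^k$, which is exactly where your slicing-and-integrating computation does close; choosing $\rho=(1\wedge\beta/\beta_{1})\,\eps_{h}$ also disposes of the $\eps_{g}$-versus-$\eps_{h}$ bookkeeping you glossed over when passing from a cover with diameters below $\eps_{\Id^k h}$ to slice covers with diameters below $\eps_h$. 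So to make your write-up correct you must either prove or quote such a reduction-to-cubes statement; as written, the ``reverse Fubini inequality for net measures'' is the missing ingredient, and for general open sets it is not merely unproved but false. The measurability issue you flag is minor by comparison and can be handled with upper integrals.
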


\begin{proof}
Let $g=\Id^k h\in\gauge_{d}$. Thanks to Lemmas~8,~9 and~10 in~\cite{Durand:2007uq}, it suffices to prove that there are two reals $\kappa'>0$ and $\rho\in (0,\eps_{g}]$ such that
\begin{equation}\label{slicingeq}
\sum_{p=1}^\infty g(\diam{\lambda_{p}})\geq\kappa'\,g(\diam{\lambda})
\end{equation}
for any $c$-adic cube $\lambda\subseteq W$ with diameter less than $\rho$ and for any sequence $(\lambda_{p})_{p\geq 1}$ in $\Lambda_{c}\cup\{\emptyset\}$ such that $E\cap\lambda\subseteq\bigsqcup_{p}
\lambda_{p}\subseteq\lambda$ (i.e. the sets $\lambda_{p}$ are disjoint, contained in $\lambda$ and cover $E\cap\lambda$). Note that such a cube $\lambda$ is of the form $\lambda=\lambda^{(1)}\times\lambda^{(2)}$, 
where $\lambda^{(1)}$ (resp. $\lambda^{(2)}$) is a $c$-adic cube of $\R^{d-k}$ (resp. $\R^k$). In addition, there is a real $\beta>0$ depending only on the norm $\R^d$ is endowed with such that $\diam{\lambda}=\beta
\,c^{-{\gene{\lambda}_{c}}}$, where $\gene{\lambda}_{c}$ denotes the generation of $\lambda$. Likewise, there is a real $\beta_{1}>0$ such that $\diam{\lambda^{(1)}}=\beta_{1}\,c^{-{\gene{\lambda^{(1)}}_{c}}}$. 
Furthermore, each $\lambda_{p}$ is also of the form $\lambda_{p}^{(1)}\times\lambda_{p}^{(2)}$, where $\lambda_{p}^{(1)}$ and $\lambda_{p}^{(2)}$ are $c$-adic cubes of $\R^{d-k}$ and $\R^k$ respectively, or the 
empty set. When the sets $\lambda_{p}$ and $\lambda_{p}^{(1)}$ are cubes, their diameter may also be expressed in terms of their generation in the previous manner.

In what follows, we choose $\rho$ to be equal to $(1\wedge(\beta/\beta_{1}))\,\eps_{h}$, where $\wedge$ denotes minimum. Note that $\rho\leq\eps_{g}$. As a result, for each integer $p\geq 1$, we have
\begin{eqnarray*}
g(\diam{\lambda_{p}})=h(\diam{\lambda_{p}})\diam{\lambda_{p}}^k &\geq& \left(1\wedge\frac{\beta}{\beta_{1}}\right)^{d-k} h(\diam{\lambda^{(1)}_{p}})\beta^k\leb^k(\lambda^{(2)}_{p})\\
&=& \left(1\wedge\frac{\beta}{\beta_{1}}\right)^{d-k}\beta^k\int_{\lambda^{(2)}} h(\diam{\mu_{p}(x_{2})})\leb^k(\dd x_{2}),
\end{eqnarray*}
where $\mu_{p}(x_{2})$ is equal to $\lambda_{p}^{(1)}$ if $x_{2}\in\lambda_{p}^{(2)}$ and to the empty set otherwise. As $\lambda^{(2)}$ is included in $W^*$ and $W'$ has full Lebesgue measure in $W^*$, we thus obtain
\[
\sum_{p=1}^\infty g(\diam{\lambda_{p}})\geq\left(1\wedge\frac{\beta}{\beta_{1}}\right)^{d-k}\beta^k\int_{\lambda^{(2)}\cap W'}\sum_{p=1}^\infty h(\diam{\mu_{p}(x_{2})})\leb^k(\dd x_{2}).
\]
Observe that, for any $x_{2}\in\lambda^{(2)}\cap W'$, the $c$-adic cubes $\mu_{p}(x_{2})$, for $p\geq 1$, cover the set $E_{x_{2}}\cap\lambda^{(1)}$ and are of diameter less than $\eps_{h}$. As a consequence,
\[
\sum_{p=1}^\infty h(\diam{\mu_{p}(x_{2})})\geq\netm^h_{\infty}(E_{x_{2}}\cap\lambda^{(1)}).
\]
The right-hand side is at least $\netm^h_{\infty}(E_{x_{2}}\cap\interior{\lambda^{(1)}})$, where $\interior{\lambda^{(1)}}$ denotes the interior of $\lambda^{(1)}$. Due to the fact that $\interior{\lambda^{(1)}}$ is an open 
subset of $W_{x_{2}}$, it follows from~(\ref{slicinghyp}) that $\netm^h_{\infty}(E_{x_{2}}\cap\interior{\lambda^{(1)}})$ is at least $\kappa\,\netm^h_{\infty}(\interior{\lambda^{(1)}})$, which is equal to $\kappa
\,h(\diam{\lambda^{(1)}})$ thanks to~\cite[Lemma~9]{Durand:2007uq}. This leads to
\begin{eqnarray*}
\sum_{p=1}^\infty g(\diam{\lambda_{p}}) &\geq& \left(1\wedge\frac{\beta}{\beta_{1}}\right)^{d-k}\beta^k\kappa\,h(\diam{\lambda^{(1)}})\leb^k(\lambda^{(2)}\cap W')\\
&=& \left(1\wedge\frac{\beta}{\beta_{1}}\right)^{d-k}\beta^k\kappa\,h(\diam{\lambda^{(1)}})\leb^k(\lambda^{(2)})\geq\left(\frac{\beta_{1}}{\beta}\wedge\frac{\beta}{\beta_{1}}\right)^{d-k}\kappa\,h(\diam{\lambda})
\diam{\lambda}^k
\end{eqnarray*}
which directly implies~(\ref{slicingeq}).
\end{proof}

We are now able to prove Theorem~\ref{ubiquity2}. To this end, let $h\in\gauge_{d-k}$ and let $V$ be a nonempty open subset of $\R^d$. According to~(\ref{existhunderline}), there exist a gauge function $\underline{h}\in\gauge_{d-k}$ and a subset $V'$ of $V$ with full Lebesgue measure such that, for any point $x\in V'$,~(\ref{ubiquity2eq}) holds for infinitely many indices $i\in I$. Recall that we need to prove that the set $\Fcal_{(P_{i},r_{i})_{i\in I}}$ defined by~(\ref{defFPiri}) belongs to the class $\grint^{\Id^k h}(V)$.

To proceed, let us consider an orthonormal basis $(e_{1},\ldots,e_{d-k})$ of the vector space $\vec{T}$ associated with $T$, a point $a\in T$ and an orthonormal basis $(e_{d-k+1},\ldots,e_{d})$ of the orthogonal complement $\vec{T}^\perp$ of $\vec{T}$. Then, for any $(y_{1},\ldots,y_{d})\in\R^d$, let
\[
\Phi(y_{1},\ldots,y_{d})=a+y_{1}e_{1}+\ldots+y_{d}e_{d}.
\]
Note that there exists a real $\gamma\geq 1$ such that for any $x_{1},x_{1}'\in\R^{d-k}$ and any $x_{2}\in\R^k$,
\begin{equation}\label{Phinorm}
\frac{1}{\gamma}\|x_{1}-x_{1}'\|\leq\|\Phi(x_{1},x_{2})-\Phi(x_{1}',x_{2})\|\leq\gamma\|x_{1}-x_{1}'\|.
\end{equation}

The set $\Phi^{-1}(V')$ has full Lebesgue measure in the open set $W=\Phi^{-1}(V)$ and, using the notations introduced at the beginning of this section, we may deduce from Fubini's theorem that there is a subset $W'$ of 
$W^*$ with full Lebesgue measure such that for every $x_{2}\in W'$ and Lebesgue-almost every $x_{1}\in W_{x_{2}}$, there are infinitely many indices $i\in I$ satisfying
\[
\dist(\Phi(x_{1},x_{2}),P_{i})<\underline{h}(r_{i})^{\frac{1}{d-k}}.
\]

Let $x_{2}\in W'$. Adapting the content of Subsection~4.4.1 in~\cite{Beresnevich:2006lr} to our setting, it is straightforward to check that, owing to~(\ref{existT}), for each $i\in I$, there exists a unique point $z_{i,x_{2}}\in
\R^{d-k}$ enjoying $\Phi(z_{i,x_{2}},x_{2})\in P_{i}$ and that for Lebesgue-almost every $x_{1}\in W_{x_{2}}$, there are infinitely many indices $i\in I$ such that
\[
\|\Phi(x_{1},x_{2})-\Phi(z_{i,x_{2}}, x_{2})\|< C \, \underline{h}(r_{i})^{\frac{1}{d-k}},
\]
where $C$ is the supremum appearing in~(\ref{existT}). Hence, due to~(\ref{Phinorm}), we have
\[
\|x_{1}-z_{i,x_{2}}\|< C\,\gamma\,\underline{h}(r_{i})^{\frac{1}{d-k}}.
\]
As a result, $(z_{i,x_{2}},C\,\gamma\,\underline{h}(r_{i})^{1/(d-k)})_{i\in I}$ is a homogeneous ubiquitous system in $W_{x_{2}}$, see Definition~\ref{ubsys}. Due to~\cite[Proposition~15]{Durand:2007uq}, the family $(z_{i,x_{2}},\underline{h}(r_{i})^{1/(d-k)}/\gamma)_{i\in I}$ is also a homogeneous ubiquitous system in $W_{x_{2}}$, see Remark~\ref{remubsyscst}. Moreover, the fact that $\underline{h}\in\gauge_{d-k}$ clearly implies that
\[
\forall r>0 \qquad \frac{1}{\gamma}\,\underline{h}(r)^{\frac{1}{d-k}}\leq \left(\underline{h}\left(\frac{r}{\gamma}\right)\right)^{\frac{1}{d-k}},
\]
so that $(z_{i,x_{2}},\underline{h}(r_{i}/\gamma)^{1/(d-k)})_{i\in I}$ is a homogeneous ubiquitous system in $W_{x_{2}}$ as well. Owing to Theorem~\ref{ubiquityold}, the set of all $x_{1}\in\R^{d-k}$ such that $\|x_{1}-
z_{i,x_{2}}\|<r_{i}/\gamma$ for infinitely many $i\in I$ belongs to the class $\grint^{\underline{h}}(W_{x_{2}})$, thereby having maximal $\netm^h_{\infty}$-mass in every open subset of $W_{x_{2}}$. Moreover, thanks 
to~(\ref{Phinorm}), this last set is included in the set $(\Phi^{-1}(F))_{x_{2}}$ defined as in~(\ref{defEx2}). Hence,
\[
\forall x_{2}\in W' \qquad \forall U\subseteq W_{x_{2}}\text{ open} \qquad \netm^h_{\infty}((\Phi^{-1}(F))_{x_{2}}\cap U)=\netm^h_{\infty}(U).
\]
Lemma~\ref{slicing} then ensures that $\Phi^{-1}(F)$ has maximal $\netm^{\Id^k h}_{\infty}$-mass in every open subset of $W$. By virtue of~\cite[Lemma~12]{Durand:2007uq}, the set $\Phi^{-1}(F)$ lies in the class $
\grint^{\Id^k h}(W)$ and, owing to Theorem~\ref{grintstable}(\ref{bilip}), the fact that $\Phi$ is bi-Lipschitz finally implies that $F$ belongs to $\grint^{\Id^k h}(V)$ and Theorem~\ref{ubiquity2} follows.

\section{Proof of Theorem~\ref{thmRnnu}}\label{proofthmRnnu}

Before entering the proof of Theorem~\ref{thmRnnu}, let us briefly comment on the summability conditions appearing in the statement. It is easy to check that $\sum_{q}h(q^{-(\nu+1)/n})$ converges if and only if $
\sum_{q} q^{n-1}h(q^{-(\nu+1)})$ does, by comparing these sums with integrals in the usual manner and performing a change of variable. Furthermore, observing that the number of vectors $q\in\Z^n$ for which $\|q\|
_{\infty}=Q$ is equivalent to $2^n n\, Q^{n-1}$ as $Q$ tends to infinity, we deduce that
\begin{equation}\label{equivconv}
\sum_{q=1}^\infty h(q^{-(\nu+1)/n})<\infty \qquad\Longleftrightarrow\qquad \sum_{q\in\Z^n\setminus\{0\}} h({\|q\|_{\infty}}^{-\nu-1})<\infty.
\end{equation}

For the sake of clarity, we split the statement of Theorem~\ref{thmRnnu} into four propositions, namely Propositions~\ref{RnnuconvHau} to \ref{RnnudivHau}, that we now state and establish.

\begin{prp}\label{RnnuconvHau}
Let us assume that $n\geq 2$. Let $h\in\gauge_{1}$, let $V$ be a nonempty open subset of $\R^n$ and let $\nu>n-1$. Then,
\[
\sum\nolimits_{q} h(q^{-(\nu+1)/n})<\infty \qquad\Longrightarrow\qquad \hau^{\Id^{n-1}h}(R_{n,\nu}\cap V)=0.
\]
\end{prp}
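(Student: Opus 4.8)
The plan is to prove this convergence statement by the standard covering argument: exhibit an efficient cover of $R_{n,\nu}\cap V$ by boxes adapted to the gauge $\Id^{n-1}h$ and show the corresponding sum of $g$-values is finite, where $g=\Id^{n-1}h$. By Proposition~\ref{prpgauged} we may assume $h\prec\Id$, since otherwise $\Id^{n-1}h$ is comparable to Lebesgue measure and $R_{n,\nu}$, having Lebesgue measure zero, trivially has $\hau^{\Id^{n-1}h}$-measure zero. So assume $h\prec\Id$. First I would recall from the definition of $R_{n,\nu}$ that
\[
R_{n,\nu}=\bigcap_{\gamma>0}\left\{ \omega\in\R^n \:\Bigl|\: |q\cdot\omega|<\frac{\gamma}{{\|q\|_{1}}^\nu} \text{ for some }q\in\Z^n\setminus\{0\} \right\},
\]
so that for every fixed $\gamma>0$ we have the cover
\[
R_{n,\nu}\subseteq\bigcup_{q\in\Z^n\setminus\{0\}}\left\{ \omega\in\R^n \:\Bigl|\: |q\cdot\omega|<\frac{\gamma}{{\|q\|_{1}}^\nu} \right\}.
\]

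The key geometric step is to understand the set $\{\omega : |q\cdot\omega|<\gamma/\|q\|_1^\nu\}$: it is a slab of width $\approx\gamma/\|q\|_1^{\nu+1}$ (after normalizing by $\|q\|$) perpendicular to $q$. Intersecting with a bounded open set $V$, this slab is covered by roughly $C_V\,\|q\|_1^{(n-1)} \cdot (\|q\|_1^{\nu+1}/\gamma)^{?}$ — more precisely, by about $N_q\asymp \|q\|_1^{n-1}\cdot\bigl(\gamma^{-1}\|q\|_1^{\nu+1}\bigr)^{\,0}$... let me be careful: a slab of thickness $t$ inside a ball of radius $R$ can be covered by $\asymp (R/t)^{0}\cdot(R/t)^{\,?}$ cubes of side $t$, namely by $\asymp R^{n-1}/t^{n-1}\cdot (t/t)$, i.e.\ $\asymp (R/t)^{n-1}$ cubes of side $t$, but here I want cubes of side comparable to $t=\gamma/\|q\|_1^{\nu+1}$ so that $g$ of the diameter behaves like $t^{n-1}h(t)\asymp \|q\|_1^{-(n-1)(\nu+1)}h(\|q\|_1^{-\nu-1})$ (using $h(\gamma^{1/(\nu+1)}s)\asymp h(s)$ up to constants depending on $\gamma$, via the doubling-type behavior of $\Id^{n-1}h$). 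Thus each slab contributes $\asymp \|q\|_1^{n-1}\cdot \|q\|_1^{-(n-1)(\nu+1)}\,\|q\|_1^{-(n-1)}\cdot h(\|q\|_1^{-\nu-1}) \cdot \|q\|_1^{(n-1)(\nu+1)}$ — I will need to recompute the exponent bookkeeping carefully, but the upshot should be that the total $g$-sum over $q$ is comparable to
\[
\sum_{q\in\Z^n\setminus\{0\}} h\bigl({\|q\|_{\infty}}^{-\nu-1}\bigr),
\]
which converges by the equivalence~(\ref{equivconv}) under our hypothesis. Since this holds for every $\gamma$, and shrinking $\gamma$ forces the covering boxes to have diameters tending to zero while keeping the $g$-sum bounded, we conclude $\hau^{\Id^{n-1}h}(R_{n,\nu}\cap V)=0$.

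The main obstacle I anticipate is the exponent bookkeeping in the covering estimate: getting the right count of side-$t$ cubes needed to cover the intersection of a thin slab with a bounded region, and then checking that $g$ of their common diameter, summed over those cubes and over all $q$, reproduces exactly $\sum_q h(\|q\|_\infty^{-\nu-1})$ up to a multiplicative constant. This requires using that $r\mapsto h(r)/r$ is nonincreasing (so $h$ is sub-multiplicative in the relevant range) to absorb the $\gamma$-dependence and to pass between $h$ evaluated at $\gamma/\|q\|_1^{\nu+1}$ and at $\|q\|_1^{-\nu-1}$, together with the elementary comparison between a sum over $\Z^n$ and an integral (already invoked for~(\ref{equivconv})). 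A secondary, more routine point is handling $V$ unbounded: one reduces to $V$ bounded by covering $V$ with countably many bounded open pieces and using countable subadditivity of $\hau^{\Id^{n-1}h}$, noting the summability condition does not depend on which bounded piece is chosen.
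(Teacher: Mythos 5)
Your covering set-up (reduce to bounded $V$, cover each slab $\{|q\cdot\omega|<\gamma\|q\|_{1}^{-\nu}\}$ by roughly $(R/t_q)^{n-1}$ cubes of side $t_q\asymp\gamma\|q\|_{1}^{-\nu-1}$, so that each slab contributes $\asymp R^{n-1}h(t_q)$ to the $\Id^{n-1}h$-sum) is sound and matches the covering estimate the paper borrows from Dodson--Vickers. The genuine gap is in the final inference: ``diameters tending to zero while keeping the $g$-sum bounded'' only yields $\hau^{\Id^{n-1}h}(R_{n,\nu}\cap V)\leq C<\infty$, not $=0$. To get zero measure you must produce, for every $\eta>0$, a cover of arbitrarily small mesh whose $g$-sum is less than $\eta$; a uniform bound over all $\gamma$ does not do this, and this is precisely the crux of the convergence half. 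As written, your proof stops one step short of the conclusion.

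The gap is fixable in either of two ways. The paper's route is to first upgrade the defining property of $R_{n,\nu}$: any $\omega\in R_{n,\nu}$ satisfies $|q\cdot\omega|<\|q\|_{\infty}^{-\nu}$ for \emph{infinitely many} $q\in\Z^n\setminus\{0\}$ (if only finitely many $q$ worked, one could manufacture a $\gamma>0$ violating membership in $R_{n,\nu}$), so for every $Q$ one may cover $R_{n,\nu}\cap V$ using only the slabs with $\|q\|_{\infty}\geq Q$; the resulting $g$-sum is then a tail of the convergent series $\sum_{q}h(\|q\|_{\infty}^{-\nu-1})$ and tends to zero as $Q\to\infty$. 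Alternatively, within your own $\gamma\to 0$ scheme, note that the total is $\asymp R^{n-1}\sum_{q}h(\gamma\|q\|_{1}^{-\nu-1})$, each term tends to $0$ as $\gamma\downarrow 0$ (since $h$ is continuous at $0$ with $h(0)=0$) and is dominated by $h(\|q\|_{\infty}^{-\nu-1})$, which is summable by~(\ref{equivconv}); dominated convergence then makes the $g$-sum tend to zero, which is what the conclusion requires. Either supplement closes the argument; without one of them the claim $\hau^{\Id^{n-1}h}(R_{n,\nu}\cap V)=0$ does not follow. (Your preliminary reduction when $h\not\prec\Id$, via comparability of $\hau^{\Id^{n-1}h}$ with Lebesgue measure and $\leb^n(R_{n,\nu})=0$, is fine but not needed once the tail or dominated-convergence step is in place.)
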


\begin{proof}
It suffices to show that, if $\sum_{q} h(q^{-(\nu+1)/n})$ converges, then the Hausdorff measure of $R_{n,\nu}$ for the gauge $\Id^{n-1}h$ is equal to zero. As $R_{n,\nu}$ is stable under the mappings $\omega\mapsto
\lambda\,\omega$, for $\lambda>0$, it is in fact enough to prove that the set $R_{n,\nu}\cap (-1/2,1/2]^n$ has Hausdorff measure zero. To this end, observe that any point $\omega\in R_{n,\nu}$ satisfies $|q\cdot
\omega|<{\|q\|_{\infty}}^{-\nu}$ for infinitely many vectors $q\in\Z^n\setminus\{0\}$. Indeed, a point $\omega\in\R^n$ such that $|q\cdot\omega|\geq {\|q\|_{\infty}}^{-\nu}$ for all $q$ except $q_{1},\ldots,q_{r}$ 
would enjoy $|q\cdot\omega|\geq\alpha{\|q\|_{1}}^{-\nu}$ for all $q$, where $\alpha=\min\{1,|q_{1}\cdot\omega|\,{\|q_{1}\|_{\infty}}^\nu,\ldots,|q_{r}\cdot\omega|\,{\|q_{r}\|_{\infty}}^\nu\}$, and thus could not belong 
to $R_{n,\nu}$. As a consequence,
\[
\forall Q\geq 1 \qquad R_{n,\nu}\cap\left(-\frac{1}{2},\frac{1}{2}\right]^n\subseteq\bigcup_{q\in\Z^n\atop \|q\|_{\infty}\geq Q}\left\{ \omega\in\left(-\frac{1}{2},\frac{1}{2}\right]^n \:\biggl|\: |q\cdot\omega|<\frac{1}{{\|
q\|_{\infty}}^\nu} \right\}.
\]
As pointed out in~\cite[Section~6]{Dodson:1986jk}, each of the sets whose union forms the right-hand side is covered by at most $\beta {\|q\|_{\infty}}^{(n-1)(\nu+1)}$ cubes with diameter $\gamma{\|q\|_{\infty}}^{-
\nu-1}$, where $\beta$ and $\gamma$ are constants greater than one. Along with the fact that $r\mapsto h(r)/r$ is nonincreasing near zero, this implies that for all $\delta>0$ small enough and $Q$ large enough,
\[
\hau^{\Id^{n-1} h}_{\delta}\left(R_{n,\nu}\cap\left(-\frac{1}{2},\frac{1}{2}\right]^n\right)\leq\beta\gamma^n\sum_{q\in\Z^n\atop \|q\|_{\infty}\geq Q}h({\|q\|_{\infty}}^{-\nu-1}).
\]
If the sum $\sum_{q} h(q^{-(\nu+1)/n})$ converges, then the right-hand side tends to zero as $Q$ tends to infinity, by virtue of~(\ref{equivconv}). Letting $\delta$ go to zero, we deduce that the Hausdorff measure of the 
set $R_{n,\nu}\cap (-1/2,1/2]^n$ vanishes.
\end{proof}

\begin{prp}
Let us assume that $n\geq 2$. Let $h\in\gauge_{1}$, let $V$ be a nonempty open subset of $\R^n$ and let $\nu>n-1$. Then,
\[
\sum\nolimits_{q} h(q^{-(\nu+1)/n})<\infty \qquad\Longrightarrow\qquad R_{n,\nu}\not\in\grint^{\Id^{n-1}h}(V).
\]
\end{prp}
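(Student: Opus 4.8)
The plan is to argue by way of the relationship between membership in $\grint^{\Id^{n-1}h}(V)$ and the size of Hausdorff measures recorded in Theorem~\ref{grintstable}(\ref{relsizelargeint}), combined with the vanishing result just proven. The crucial observation is that Proposition~\ref{RnnuconvHau} is robust in the gauge: its conclusion holds not only for $h$ but for any gauge function in $\gauge_{1}$ whose associated series converges. So if one could certify $\hau^{\Id^{n-1}\overline{h}}(R_{n,\nu})=0$ for \emph{some} gauge $\overline{h}\in\gauge_{1}$ strictly dominating $h$ near zero, Theorem~\ref{grintstable}(\ref{relsizelargeint}) would immediately rule out $R_{n,\nu}\in\grint^{\Id^{n-1}h}(V)$.

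First I would produce, starting from the hypothesis $\sum_{q} h(q^{-(\nu+1)/n})<\infty$, a gauge function $\overline{h}\in\gauge_{1}$ with $\overline{h}\prec h$ and $\sum_{q}\overline{h}(q^{-(\nu+1)/n})<\infty$ still. This is precisely the type of construction used in the convergence case of the proof of Theorem~\ref{grintSchmidt}, obtained by adapting the methods in the proof of~\cite[Theorem~3.5]{Durand:2007lr}: from the convergent series of nonnegative terms $a_{q}=h(q^{-(\nu+1)/n})$ one extracts a slowly increasing weight tending to infinity that preserves convergence, and interpolates it into a gauge. I would then note the two elementary facts that $\Id^{n-1}\overline{h}\in\gauge_{n}$ (because $r\mapsto\Id^{n-1}\overline{h}(r)/r^{n}=\overline{h}(r)/r$ is positive and nonincreasing near zero) and that $\overline{h}\prec h$ forces $\Id^{n-1}\overline{h}\prec\Id^{n-1}h$ (the two relevant ratios coincide).

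Next, applying Proposition~\ref{RnnuconvHau} with $\overline{h}$ in place of $h$ and with $\R^n$ in place of $V$ gives $\hau^{\Id^{n-1}\overline{h}}(R_{n,\nu})=0$. Suppose now $R_{n,\nu}\in\grint^{\Id^{n-1}h}(V)$. Since $\Id^{n-1}\overline{h}\in\gauge_{n}$ and $\Id^{n-1}\overline{h}\prec\Id^{n-1}h$, Theorem~\ref{grintstable}(\ref{relsizelargeint}) would yield $\hau^{\Id^{n-1}\overline{h}}(R_{n,\nu})=\infty$, contradicting the previous line. Hence $R_{n,\nu}\notin\grint^{\Id^{n-1}h}(V)$, and this works verbatim for every nonempty open $V\subseteq\R^n$, since the conclusion of Theorem~\ref{grintstable}(\ref{relsizelargeint}) concerns the full set.

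The only step that is not purely formal is the construction of $\overline{h}$; everything else is a direct combination of Proposition~\ref{RnnuconvHau} and Theorem~\ref{grintstable}(\ref{relsizelargeint}), following the same template as the convergence case of Theorem~\ref{grintSchmidt}. One may equivalently dispense with the contradiction and argue directly: any member of $\grint^{\Id^{n-1}h}(V)$ has infinite $\hau^{\Id^{n-1}\overline{h}}$-measure, whereas $R_{n,\nu}$ does not.
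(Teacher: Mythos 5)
Your argument is correct and coincides with the paper's own proof: both construct $\overline{h}\in\gauge_{1}$ with $\overline{h}\prec h$ and $\sum_{q}\overline{h}(q^{-(\nu+1)/n})<\infty$, invoke Proposition~\ref{RnnuconvHau} with $\overline{h}$ to get $\hau^{\Id^{n-1}\overline{h}}(R_{n,\nu}\cap V)=0$, and conclude via Theorem~\ref{grintstable}(\ref{relsizelargeint}). Your extra verifications (that $\Id^{n-1}\overline{h}\in\gauge_{n}$ and $\Id^{n-1}\overline{h}\prec\Id^{n-1}h$) are details the paper leaves implicit, but the route is the same.
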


\begin{proof}
There is a gauge $\overline{h}\in\gauge_{1}$ such that $\overline{h}\prec h$ and $\sum_{q} \overline{h}(q^{-(\nu+1)/n})$ converges. Employing Proposition~\ref{RnnuconvHau} with $\overline{h}$ rather than $h$, we 
obtain $\hau^{\Id^{n-1}\overline{h}}(R_{n,\nu}\cap V)=0$. Theorem~\ref{grintstable}(\ref{relsizelargeint}) implies that $R_{n,\nu}$ does not belong to $\grint^{\Id^{n-1}h}(V)$.
\end{proof}

\begin{prp}\label{Rnnudivgrint}
Let us assume that $n\geq 2$. Let $h\in\gauge_{1}$, let $V$ be a nonempty open subset of $\R^n$ and let $\nu>n-1$. Then,
\[
\sum\nolimits_{q} h(q^{-(\nu+1)/n})=\infty \qquad\Longrightarrow\qquad R_{n,\nu}\in\grint^{\Id^{n-1}h}(V).
\]
\end{prp}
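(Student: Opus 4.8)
\textbf{Proof plan for Proposition~\ref{Rnnudivgrint}.} The strategy is to realize $R_{n,\nu}$ (restricted to a suitable set of approximating integer vectors) as a set of the form $\Fcal_{(P_{i},r_{i})_{i\in I}}$ to which Theorem~\ref{ubiquity2} applies with $d=n$ and $k=n-1$, so that the approximating subspaces $P_{i}$ are hyperplanes. First I would observe that $R_{n,\nu}$ contains, for every integer $j\geq 1$, the set $R_{n,\nu}^{(j)}$ of all $\omega\in\R^n$ such that $|q\cdot\omega|<j^{-1}{\|q\|_{\infty}}^{-\nu}$ for infinitely many $q\in\Z^n\setminus\{0\}$; indeed, as shown in the proof of Proposition~\ref{RnnuconvHau}, membership in $R_{n,\nu}$ forces infinitely many solutions of $|q\cdot\omega|<{\|q\|_{\infty}}^{-\nu}$, and a rescaling/diagonal argument shows $R_{n,\nu}=\bigcap_{j}R_{n,\nu}^{(j)}$ up to the harmless discrepancy between the $\ell^1$ and $\ell^\infty$ norms. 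Since $\grint^{\Id^{n-1}h}(V)$ is closed under countable intersections by Theorem~\ref{grintstable}(\ref{intersect}), it suffices to show each $R_{n,\nu}^{(j)}$ lies in $\grint^{\Id^{n-1}h}(V)$; and by Proposition~\ref{grintpropbase}(\ref{propbaseextension}) it is enough to exhibit a subset of $R_{n,\nu}^{(j)}$, of the form $\Fcal_{(P_i,r_i)_{i\in I}}$, that belongs to this class (after checking it is $G_\delta$, which is automatic since it is a $\limsup$ of open sets).

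The geometric translation is the standard one: the condition $|q\cdot\omega-p|<\eta$ says exactly that $\dist(\omega,P_{p,q})<\eta/\|q\|_{2}$, where $P_{p,q}=\{\omega\in\R^n \mid q\cdot\omega=p\}$ is an affine hyperplane. So for a fixed $j$ I would take $I=\{(p,q)\in\Z\times(\Z^n\setminus\{0\})\}$, set $P_{(p,q)}=P_{p,q}$ and $r_{(p,q)}={\|q\|_{\infty}}^{-\nu}/(j\|q\|_{2})$; then $\Fcal_{(P_{(p,q)},r_{(p,q)})}$ is precisely $R_{n,\nu}^{(j)}$. One checks $(P_{(p,q)},r_{(p,q)})\in\S_n^{n-1}(I)$: the radii are bounded and, for each $m$, only finitely many hyperplanes $P_{p,q}$ meet $B_m$ while having radius $>1/m$, since $P_{p,q}\cap B_m\neq\emptyset$ forces $|p|\leq m\|q\|_{2}$ and $r_{(p,q)}>1/m$ bounds $\|q\|$. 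As is well known in this linear-forms context (and exactly as in Section~\ref{applicDioph} with $\mathcal{Q}_i$), hypothesis~(\ref{existT}) of Theorem~\ref{ubiquity2} \emph{fails} for the full family — one must restrict $q$ to one of the cones $\mathcal{Q}_i=\{q\in\Z^n\setminus\{0\}\mid\|q\|_\infty=|q_i|\}$, with common transversal line $T_i=\R e_i$; on $T_i$ the slice $\{t e_i \mid \dist(t e_i,P_{p,q})<1\}$ has length bounded uniformly in $(p,q)$ because the coefficient of $t$ in $q\cdot(te_i)=p$ is $q_i$ with $|q_i|=\|q\|_\infty\geq 1$. Splitting the divergent sum $\sum_q h(q^{-(\nu+1)/n})$ — equivalently (by the comment opening Section~\ref{proofthmRnnu} and~(\ref{equivconv})) $\sum_{q\in\Z^n\setminus\{0\}}h({\|q\|_\infty}^{-\nu-1})$ — over the $n$ cones $\mathcal{Q}_i$, at least one subsum diverges, and I would work with that $i$, replacing $I$ by $\Z\times\mathcal{Q}_i$.

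It remains to verify the ubiquity hypothesis~(\ref{existhunderline}) of Theorem~\ref{ubiquity2}, here with $d-k=1$, $h$ the given gauge in $\gauge_1$, and a gauge $\underline h\in\gauge_1$ with $h\prec\underline h$. The plan is: first, exactly as in the proof of Theorem~\ref{grintSchmidt}, use the technique from the proof of~\cite[Theorem~3.5]{Durand:2007lr} to manufacture $\underline h\in\gauge_1$ with $h\prec\underline h$ and $\sum_{q\in\mathcal{Q}_i}\underline h({\|q\|_\infty}^{-\nu-1})$ still divergent; one also needs $\underline h\prec\Id$ so that the resulting approximation exponents stay in range, which can be arranged since $h\prec\Id$ (as $\nu>n-1$ makes $q^{-(\nu+1)/n}$ summable against no extra room being needed — more precisely, divergence of $\sum_q h(q^{-(\nu+1)/n})$ forces $h$ to decay slower than $\Id$, a short argument like the one in the proof of Corollary~\ref{corRn}). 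Second, with $r_{(p,q)}={\|q\|_\infty}^{-\nu}/(j\|q\|_2)$ one has ${\|q\|_\infty}^{-\nu-1}\asymp r_{(p,q)}$ up to norm-constants on $\mathcal{Q}_i$, so $\underline h(r_{(p,q)})\asymp\underline h({\|q\|_\infty}^{-\nu-1})$, and I must show that for Lebesgue-a.e.\ $\omega\in V$ there are infinitely many $(p,q)$ with $\dist(\omega,P_{(p,q)})<\underline h(r_{(p,q)})$, i.e.\ $|q\cdot\omega-p|<\|q\|_2\,\underline h(r_{(p,q)})$. This is a Lebesgue-measure (Khintchine–Groshev type) statement; the cleanest route is to quote the $n>1$ Groshev theorem or, better, Schmidt's Theorem~\ref{thmSchmidt} / Theorem~\ref{thmBV} in the special case $m=1$, with approximating function $\psi(q)=\|q\|_2\,\underline h({\|q\|_\infty}^{-\nu-1})$ supported on $\mathcal{Q}_i$, whose divergence sum $\sum_q\psi(q)^{\!1}\asymp\sum_{q\in\mathcal{Q}_i}\|q\|\,\underline h({\|q\|_\infty}^{-\nu-1})$ dominates $\sum_{q\in\mathcal{Q}_i}\underline h({\|q\|_\infty}^{-\nu-1})=\infty$ — hence the associated $\Fcal$ has full Lebesgue measure in $V$, which is exactly~(\ref{existhunderline}). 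Then Theorem~\ref{ubiquity2} gives $\Fcal_{(P_{(p,q)},r_{(p,q)})_{(p,q)\in\Z\times\mathcal{Q}_i}}\in\grint^{\Id^{n-1}h}(V)$, and since this set is a $G_\delta$ subset of $R_{n,\nu}^{(j)}$, Proposition~\ref{grintpropbase}(\ref{propbaseextension}) finishes the inclusion $R_{n,\nu}^{(j)}\in\grint^{\Id^{n-1}h}(V)$, and the countable intersection over $j$ completes the proof.

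\textbf{Main obstacle.} The routine verifications (that $\S_n^{n-1}(I)$ membership holds, that~(\ref{existT}) holds on a cone, that the two summability conditions match under the integral/norm comparisons) are all standard manipulations paralleling Section~\ref{applicDioph}. The genuine work is the construction of the intermediate gauge $\underline h$ with $h\prec\underline h$ preserving divergence \emph{and} staying $\prec\Id$, together with the reduction of the ubiquity hypothesis to a Lebesgue-measure statement one can actually cite; the subtlety is that Theorem~\ref{ubiquity2} needs the \emph{strict} dominance $h\prec\underline h$ in~(\ref{existhunderline}) (not merely $h$ itself as in the Beresnevich–Velani mass transference principle), so one cannot skip the gauge-doubling step. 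Once that intermediate gauge is in hand, everything else is an application of the machinery already assembled in the paper.
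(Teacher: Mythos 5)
Your reduction misidentifies the approximating family, and this breaks the final step. Siegel's condition~(\ref{siegelcond}) involves $|q\cdot\omega|$ itself, not the distance $|q\cdot\omega|_{\Z}$ to the nearest integer, so $R_{n,\nu}$ (and your $R_{n,\nu}^{(j)}$) is a limsup set over the hyperplanes \emph{through the origin} $\{\omega \,:\, q\cdot\omega=0\}$ only. The set $\Fcal_{(P_{(p,q)},r_{(p,q)})}$ you construct with all $p\in\Z$ is instead the set of $\omega$ satisfying $|q\cdot\omega|_{\Z}<j^{-1}\|q\|_{\infty}^{-\nu}$ infinitely often, and it is \emph{not} contained in $R_{n,\nu}^{(j)}$: for $n-1<\nu<n$ it has full Lebesgue measure (Theorem~\ref{thmSchmidt} in the divergence case), whereas $R_{n,\nu}^{(j)}$ is Lebesgue-null since $\sum_{Q}Q^{n-2-\nu}<\infty$; for larger $\nu$ the two sets still differ (the mod-$1$ set is invariant under integer translations, the homogeneous one is not). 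Consequently Proposition~\ref{grintpropbase}(\ref{propbaseextension}) cannot be invoked at the end: the set you have placed in $\grint^{\Id^{n-1}h}(V)$ is not a subset of $R_{n,\nu}^{(j)}$, so nothing is proved about $R_{n,\nu}$.

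The obvious repair --- keeping only $p=0$ so that the limsup set genuinely sits inside $R_{n,\nu}^{(j)}$ --- exposes the real difficulty, which your outline treats as routine: hypothesis~(\ref{existhunderline}) of Theorem~\ref{ubiquity2} then requires that for Lebesgue-almost every $\omega\in V$ there are infinitely many $q\in\mathcal{Q}_{i}$ with $|q\cdot\omega|<\|q\|_{2}\,\underline{h}(r_{q})$, a \emph{homogeneous} Khintchine--Groshev type statement that Schmidt's Theorem~\ref{thmSchmidt} (which is intrinsically about $|q\cdot x_{j}-b_{j}|_{\Z}$) does not provide, and which is not quotable elsewhere in the paper. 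Supplying it is the heart of the matter, and the paper's proof of Proposition~\ref{Rnnudivgrint} is organized precisely to avoid it: on a bounded open set away from $\{x_{n}=0\}$ it uses the bi-Lipschitz map $f(x_{1},\ldots,x_{n})=(x_{1}/x_{n},\ldots,x_{n-1}/x_{n},x_{n})$, under which $q\cdot x=x_{n}(\tilde q\cdot y+q_{n})$ with $y=(x_{1}/x_{n},\ldots,x_{n-1}/x_{n})$, so the homogeneous $n$-dimensional problem becomes the mod-$1$ linear forms problem in dimension $n-1$ times a free coordinate; Theorem~\ref{grintSchmidt} (with $m=1$ and $n-1$ in place of $n$) handles the fiber sets $\tilde R_{n-1,\nu,\alpha}$, the countable intersection over $\alpha=1/j$ is taken there, the slicing Lemma~\ref{slicing} reinstates the extra coordinate, and Theorem~\ref{grintstable}(\ref{bilip}) together with a $c$-adic cube argument and the reflection $x_{n}\mapsto -x_{n}$ transports the conclusion to an arbitrary nonempty open $V\subseteq\R^{n}$. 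Your intermediate-gauge step ($h\prec\underline{h}$ preserving divergence) does match the paper, but the genuine obstacle is the homogeneous ubiquity statement, not the gauge construction, and as written your argument does not address it.
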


\begin{proof}
Let $U$ denote a bounded open subset of $\R^{n-1}\times (0,\infty)$ such that the infimum of $x_{n}$ over all $(x_{1},\ldots,x_{n})\in U$ is positive. Then, the mapping $f$ defined by
\[
\forall (x_{1},\ldots,x_{n})\in U \qquad f(x_{1},\ldots,x_{n})=\left(\frac{x_{1}}{x_{n}},\ldots,\frac{x_{n-1}}{x_{n}},x_{n}\right)
\]
is bi-Lipschitz from $U$ onto $W=f(U)$. Note that the coordinates of the elements of $W$ are all bounded by some positive real $\rho$. Furthermore, for any real $\alpha>0$, let us consider the set
\[
\tilde R_{n-1,\nu,\alpha}=\left\{ x\in\R^{n-1} \:\biggl|\: |q\cdot x|_{\Z}<\frac{\alpha}{{\|q\|_{1}}^\nu} \text{ for infinitely many }q\in\Z^{n-1}\setminus\{0\} \right\}
\]
and let $\tilde R_{n-1,\nu}$ denote the intersection over $\alpha>0$ of the sets $\tilde R_{n-1,\nu,\alpha}$. Let $\tilde x=(x_{1},\ldots,x_{n-1})\in\tilde R_{n-1,\nu}$ and let $x_{n}>0$ such that $x=(x_{1},\ldots,x_{n})\in 
W$. Then, for any $\alpha>0$, there exists a vector $\tilde q=(q_{1},\ldots,q_{n-1})\in\Z^{n-1}\setminus\{0\}$ such that $|\tilde q\cdot\tilde x|_{\Z}<\alpha{\|\tilde q\|_{1}}^{-\nu}$. Hence, there is an integer $q_{n}\in\Z
$ such that $|\tilde q\cdot\tilde x+q_{n}|<\alpha{\|\tilde q\|_{1}}^{-\nu}$. Observe that $|q_{n}|\leq |\tilde q\cdot\tilde x|+\alpha{\|\tilde q\|_{1}}^{-\nu}\leq (\rho+\alpha)\|\tilde q\|_{1}$. Therefore,
\[
|\tilde q\cdot (x_{n}\tilde x)+q_{n}x_{n}|<\frac{\alpha x_{n}}{{\|\tilde q\|_{1}}^\nu}\leq \frac{\alpha (1+\rho+\alpha)^\nu \rho}{{\|q\|_{1}}^\nu},
\]
where $q=(q_{1},\ldots,q_{n})\in\Z^n\setminus\{0\}$. It follows that $f^{-1}(x)\in R_{n,\nu}\cap U$. Thus,
\begin{equation}\label{Rnnufinclu}
(\tilde R_{n-1,\nu}\times (0,\infty))\cap W\subseteq f(R_{n,\nu}\cap U).
\end{equation}

Let us now assume that the sum appearing in the statement of the proposition diverges. Then, the gauge $h$ necessarily enjoys $h\prec\Id$ and there exists a gauge $\underline{h}\in\gauge_{1}$ such that $h\prec
\underline{h}$ and the sum $\sum_{q} \underline{h}(q^{-(\nu+1)/n})$ diverges too. Using the same ideas as those leading to~(\ref{equivconv}), it is easy to check that the sum of $\underline{h}({\|q\|_{\infty}}^{-\nu-1})\|q
\|_{\infty}$ over all $q\in\Z^{n-1}\setminus\{0\}$ diverges. Due to the fact that $\underline{h}$ is in $\gauge_{1}$, it follows that
\[
\forall \alpha>0 \qquad \sum_{q\in\Z^{n-1}\setminus\{0\}} \underline{h}\left(\frac{\alpha}{{\|q\|_{1}}^{\nu+1}}\right)\|q\|_{1}=\infty.
\]
Applying Theorem~\ref{grintSchmidt} with $b=0$, $m=1$, $n-1$ instead of $n$ and $\psi(q)=\alpha{\|q\|_{1}}^{-\nu}$, we deduce that each set $\tilde R_{n-1,\nu,\alpha}$ belongs to the class $
\grint^{\Id^{n-2}\underline{h}}(\R^{n-1})$. Note that $\alpha\mapsto\tilde R_{n-1,\nu,\alpha}$ is nonincreasing, so that $\tilde R_{n-1,\nu}$ is also the intersection over all $j\in\N$ of the sets $\tilde R_{n-1,\nu,1/j}$. 
The class $\grint^{\Id^{n-2}\underline{h}}(\R^{n-1})$ thus contains the set $\tilde R_{n-1,\nu}$ by Theorem~\ref{grintstable}(\ref{intersect}). As $h\prec\underline{h}$, this set has maximal $\netm^{\Id^{n-2}h}_{\infty}$-
mass in every open subset of $\R^{n-1}$. By Lemma~\ref{slicing}, there is a real $\kappa'>0$ such that
\[
\netm^{\Id^{n-1}h}_{\infty}((\tilde R_{n-1,\nu}\times (0,\infty))\cap\mathcal{U})\geq\kappa'\,\netm^{\Id^{n-1}h}_{\infty}(\mathcal{U})
\]
for any open subset $\mathcal{U}$ of $\R^{n-1}\times (0,\infty)$. Thanks to~\cite[Lemma~12]{Durand:2007uq}, it follows that the set $\tilde R_{n-1,\nu}\times (0,\infty)$ has maximal $\netm^g_{\infty}$-mass in every 
open subset of $\R^{n-1}\times (0,\infty)$, for any gauge function $g\in\gauge_{n}$ enjoying $g\prec\Id^{n-1}h$. This set thus belongs to the class $\grint^{\Id^{n-1}h}(\R^{n-1}\times (0,\infty))$. Proposition~
\ref{grintpropbase}, along with~(\ref{Rnnufinclu}), then ensures that $f(R_{n,\nu}\cap U)$ belongs to $\grint^{\Id^{n-1}h}(W)$. As $f$ is bi-Lipschitz, Theorem~\ref{grintstable}(\ref{bilip}) implies that $R_{n,\nu}\cap U$ is in 
the class $\grint^{\Id^{n-1}h}(U)$. Consequently, for any gauge $g\in\gauge_{n}$ with $g\prec \Id^{n-1}h$ and any $c$-adic cube $\lambda\subseteq\R^{n-1}\times (0,\infty)$ with diameter less than $\eps_{g}$,
\[
\netm^g_{\infty}(R_{n,\nu}\cap\lambda)\geq\netm^g_{\infty}(R_{n,\nu}\cap\interior{\lambda})=\netm^g_{\infty}(\interior{\lambda})=\netm^g_{\infty}(\lambda),
\]
where the last equality is due to~\cite[Lemma~9]{Durand:2007uq}. Then, using~\cite[Lemma~10]{Durand:2007uq}, we deduce that the set $R_{n,\nu}$ has maximal $\netm^g_{\infty}$-mass in every subset of $
\R^{n-1}\times (0,\infty)$ for any gauge function $g\in\gauge_{n}$ with $g\prec\Id^{n-1}h$. Therefore,
\[
R_{n,\nu}\in\grint^{\Id^{n-1}h}(\R^{n-1}\times (0,\infty)).
\]
Furthermore, $R_{n,\nu}$ is clearly invariant under the bi-Lipschitz mapping $(x_{1},\ldots,x_{n})\mapsto (x_{1},\ldots,-x_{n})$, so that we also have
\[
R_{n,\nu}\in\grint^{\Id^{n-1}h}(\R^{n-1}\times (-\infty,0))
\]
by Theorem~\ref{grintstable}(\ref{bilip}).

Let us now consider a gauge function $g\in\gauge_{n}$ with $g\prec\Id^{n-1}h$ and $c$-adic cube $\lambda\subseteq V$ with diameter less than $\eps_{g}$. The interior $\interior{\lambda}$ of $\lambda$ is an open set 
included in $\R^{n-1}\times (0,\infty)$ or $\R^{n-1}\times (-\infty,0)$. In both cases,
\[
\netm^g_{\infty}(R_{n,\nu}\cap\lambda)\geq\netm^g_{\infty}(R_{n,\nu}\cap\interior{\lambda})=\netm^g_{\infty}(\interior{\lambda})=\netm^g_{\infty}(\lambda),
\]
where the last equality follows from~\cite[Lemma~9]{Durand:2007uq}. Applying~\cite[Lemma~10]{Durand:2007uq}, we deduce that the set $R_{n,\nu}$ has maximal $\netm^g_{\infty}$-mass in every open subset of $V$ 
for every gauge $g\in\gauge_{n}$ with $g\prec\Id^{n-1}h$. Hence, it belongs to the class $\grint^{\Id^{n-1}h}(V)$.
\end{proof}

\begin{prp}\label{RnnudivHau}
Let us assume that $n\geq 2$. Let $h\in\gauge_{1}$, let $V$ be a nonempty open subset of $\R^n$ and let $\nu>n-1$. Then,
\[
\sum\nolimits_{q} h(q^{-(\nu+1)/n})=\infty \qquad\Longrightarrow\qquad \hau^{\Id^{n-1}h}(R_{n,\nu}\cap V)=\infty.
\]
\end{prp}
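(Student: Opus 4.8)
The plan is to deduce this divergence statement for the Hausdorff measure directly from Proposition~\ref{Rnnudivgrint}, which already establishes that $R_{n,\nu}\in\grint^{\Id^{n-1}h}(V)$ under the same hypothesis, together with the relationship between membership in the large intersection classes and the size of the Hausdorff measure supplied by Theorem~\ref{grintstable}(\ref{relsizelargeint}). The only technical point is that Theorem~\ref{grintstable}(\ref{relsizelargeint}) gives infinite $\hau^{\overline{g}}$-measure for gauge functions $\overline{g}$ strictly dominated by the gauge defining the class, so we cannot apply it with $\overline{g}=\Id^{n-1}h$ itself; we must first produce a slightly larger gauge.

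First I would observe that, since $\sum_{q} h(q^{-(\nu+1)/n})=\infty$, it is possible to build a gauge function $\underline{h}\in\gauge_{1}$ such that $h\prec\underline{h}$ and the series $\sum_{q}\underline{h}(q^{-(\nu+1)/n})$ still diverges; this is exactly the kind of construction already invoked in the proof of Corollary~\ref{corRn} and in the proof of Theorem~\ref{thmLsigma}, and it can be carried out by adapting the methods in the proof of \cite[Theorem~3.5]{Durand:2007lr}. Then $\Id^{n-1}h\prec\Id^{n-1}\underline{h}$, both gauges lying in $\gauge_{n}$.

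Next I would apply Proposition~\ref{Rnnudivgrint} with $\underline{h}$ in place of $h$: since the divergence hypothesis holds for $\underline{h}$, we get $R_{n,\nu}\in\grint^{\Id^{n-1}\underline{h}}(V)$. Now Theorem~\ref{grintstable}(\ref{relsizelargeint}) applies with the gauge $\Id^{n-1}\underline{h}$ defining the class and the gauge $\overline{g}=\Id^{n-1}h$, which satisfies $\overline{g}\prec\Id^{n-1}\underline{h}$; it yields $\hau^{\Id^{n-1}h}(R_{n,\nu})=\infty$. To obtain the statement localized to $V$, I would instead invoke Proposition~\ref{grintpropbase}(\ref{propbaseextension}) or part (f) to note that $R_{n,\nu}\cap U\in\grint^{\Id^{n-1}\underline{h}}(U)$ for any nonempty open $U\subseteq V$, and then apply Theorem~\ref{grintstable}(\ref{relsizelargeint}) in $U$; taking $U=V$ gives $\hau^{\Id^{n-1}h}(R_{n,\nu}\cap V)=\infty$, which is the desired conclusion.

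There is no real obstacle here: the substantive work has all been done in Proposition~\ref{Rnnudivgrint} (whose hard part is the slicing argument combined with Theorem~\ref{grintSchmidt}), and the present proposition is a short deduction. The only thing to be careful about is not to apply Theorem~\ref{grintstable}(\ref{relsizelargeint}) with the critical gauge, which is precisely why the intermediate gauge $\underline{h}$ is introduced. $\qed$
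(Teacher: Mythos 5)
Your proposal is correct and follows essentially the same route as the paper: construct $\underline{h}\in\gauge_{1}$ with $h\prec\underline{h}$ and the sum still divergent (the divergence hypothesis forces $h\prec\Id$, which makes this possible), apply Proposition~\ref{Rnnudivgrint} with $\underline{h}$ to get $R_{n,\nu}\in\grint^{\Id^{n-1}\underline{h}}(V)$, and conclude via Theorem~\ref{grintstable}(\ref{relsizelargeint}). Your extra remark about localizing to $V$ through Proposition~\ref{grintpropbase} is a harmless refinement of the same argument.
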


\begin{proof}
Let us assume that the series appearing in the statement diverges. Observe that the gauge $h$ necessarily enjoys $h\prec\Id$. Thus, there exists a gauge $\underline{h}\in\gauge_{1}$ such that $h\prec\underline{h}$ and 
the sum $\sum_{q} \underline{h}(q^{-(\nu+1)/n})$ diverges too. Owing to Proposition~\ref{Rnnudivgrint}, the set $R_{n,\nu}$ belongs to the class $\grint^{\Id^{n-1}\underline{h}}(V)$. We conclude using Theorem~
\ref{grintstable}(\ref{relsizelargeint}).
\end{proof}


\begin{thebibliography}{99}

\bibitem{Baker:1970jf} A.~Baker and W.M.~Schmidt, Diophantine approximation and {H}ausdorff dimension, {\em Proc. London Math. Soc. (3)} {\bf 21}(3):1--11, 1970.

\bibitem{Beresnevich:1999ys} V.V.~Beresnevich, On approximation of real numbers by real algebraic numbers, {\em Acta Arith.} {\bf 90}(2):97--112, 1999.

\bibitem{Beresnevich:2000fk} V.V.~Beresnevich, Application of the concept of regular systems of points in metric number theory, {\em Vests\=\i~Nats. Akad. Navuk Belarus\={\i} Ser. F\={\i}z.-Mat. Navuk} {\bf 1}:35--39, 2000.

\bibitem{Beresnevich:2002kx} V.V.~Beresnevich, V.I.~Bernik and M.M.~Dodson, Regular systems, ubiquity and {D}iophantine approximation, in {\em A panorama of number theory or The view from Baker's garden}, G.~W{\"u}stholz Ed., Cambridge University Press, Cambridge, pp.~260--279, 2002.

\bibitem{Beresnevich:2006ve} V.V.~Beresnevich, D.~Dickinson and S.L.~Velani, Measure theoretic laws for limsup sets, {\em Mem. Amer. Math. Soc.} {\bf 179}(846):1--91, 2006.

\bibitem{Beresnevich:2005vn} V.V.~Beresnevich and S.L.~Velani, A Mass Transference Principle and the Duffin-Schaeffer conjecture for Hausdorff measures, {\em Ann. of Math. (2)} {\bf 164}(3):971--992, 2006.

\bibitem{Beresnevich:2006lr} V.V.~Beresnevich and S.L.~Velani, Schmidt's theorem, {H}ausdorff measures and slicing, {\em Int. Math. Res. Not.}, 2006, Art. ID 48794, 24 pp.

\bibitem{Bernik:1999qr} V.I.~Bernik and M.M.~Dodson, {\em Metric Diophantine approximation on manifolds}, Cambridge Tracts in Mathematics, vol.~137, Cambridge University Press, Cambridge, 1999.

\bibitem{Besicovitch:1934ly} A.S.~Besicovitch, Sets of fractional dimensions (IV): on rational approximation to real numbers, {\em J. London Math. Soc. (2)} {\bf 9}:126--131, 1934.

\bibitem{Bugeaud:2002fk} Y.~Bugeaud, Approximation par des nombres alg\'ebriques de degr\'e born\'e et dimension de Hausdorff, {\em J. Number Theory} {\bf 96}(1):174--200, 2002.

\bibitem{Bugeaud:2002uq} Y.~Bugeaud, Approximation by algebraic integers and {H}ausdorff dimension, {\em J. London Math. Soc. (2)} {\bf 65}(3):547--559, 2002.

\bibitem{Bugeaud:2004zr} Y.~Bugeaud, An inhomogeneous Jarn\'ik theorem, {\em J. Anal. Math.} {\bf 92}:327--349, 2004.

\bibitem{Calvo:1995rz} M.P.~Calvo and E.~Hairer, Accurate long-term integration of dynamical systems, {\em Appl. Numer. Math.} {\bf 18}(1-3):95--105, 1995.

\bibitem{Dickinson:1997ul} D.~Dickinson and S.L.~Velani, Hausdorff measure and linear forms, {\em J. Reine Angew. Math.} {\bf 490}:1--36, 1997.

\bibitem{Dodson:1986jk} M.M.~Dodson and J.A.G.~Vickers, Exceptional sets in Kolmogorov-Arnol'd-Moser theory, {\em J. Phys. A} {\bf 19}(3):349--374, 1986.

\bibitem{Durand:2007lr} A.~Durand, {\em Propri{\'e}t{\'e}s d'ubiquit{\'e} en analyse multifractale et s{\'e}ries al{\'e}atoires d'ondelettes {\`a} coefficients corr{\'e}l{\'e}s}, PhD thesis, Universit{\'e} Paris 12, 2007.

\bibitem{Durand:2007kx} A.~Durand, Random wavelet series based on a tree-indexed Markov chain, to appear in {\em Comm. Math. Phys.}, arXiv:{\tt 0709.3597}.

\bibitem{Durand:2007uq} A.~Durand, Sets with large intersection and ubiquity, {\em Math. Proc. Cambridge Philos. Soc.} {\bf 144}(1):119--144, 2008.

\bibitem{Durand:2007fk} A.~Durand, Singularity sets of L\'evy processes, {\em Probab. Theory Relat. Fields} (in press), 2008, doi:{\tt 10.1007/s00440-007-0134-6}.

\bibitem{Durand:2006uq} A.~Durand, Ubiquitous systems and metric number theory, {\em Adv. Math.} (in press), 2008, doi:{\tt 10.1016/j.aim.2007.12.008}.

\bibitem{Erdos:1962uq} P.~Erd\H{o}s, Representations of real numbers as sums and products of Liouville numbers, {\em Michigan Math. J.} \textbf{9}:59--60, 1962.

\bibitem{Falconer:1994hx} K.J.~Falconer, Sets with large intersection properties, {\em J. London Math. Soc. (2)} {\bf 49}(2):267--280, 1994.

\bibitem{Falconer:2003oj} K.J.~Falconer, {\em Fractal geometry: Mathematical foundations and applications}, 2nd ed., John Wiley \& Sons Inc., New York, 2003.

\bibitem{Groshev:1938jk} A.V.~Groshev, Un th\'eor\`eme sur les syst\`emes des formes lin\'eaires, {\em Dokl. Akad. Nauk SSSR} {\bf 9}:151--152, 1938.

\bibitem{Hairer:2006rm} E.~Hairer, C.~Lubich and G.~Wanner, {\em Geometric numerical integration}, 2nd ed., Springer Series in Computational Mathematics, vol.~31, Springer-Verlag, Berlin, 2006.

\bibitem{Hardy:1979fk} G.H.~Hardy and E.M.~Wright, {\em An introduction to the theory of numbers}, 5th ed., Oxford University Press, New York, 1979.

\bibitem{Jarnik:1929mf} V.~Jarn\'ik, Diophantischen Approximationen und Hausdorffsches Mass, {\em Mat. Sb.} {\bf 36}:371--381, 1929.

\bibitem{Jarnik:1931qf} V.~Jarn\'ik, \"Uber die simultanen Diophantischen Approximationen, {\em Math. Z.} {\bf 33}(1):505-543, 1931.

\bibitem{Khintchine:1926uq} A.I.~Khintchine, Zur metrischen Theorie der diophantischen Approximationen, {\em Math. Z.} {\bf 24}:706--714, 1926.
  
\bibitem{Mattila:1995fk} P.~Mattila, {\em Geometry of sets and measures in Euclidian spaces}, Cambridge Studies in Advanced Mathematics, vol.~44, Cambridge Univ. Press, 1995.

\bibitem{Olsen:2005fk} L.~Olsen, On the exact {H}ausdorff dimension of the set of {L}iouville numbers, \emph{Manuscripta Math.} {\bf 116}(2):157--172, 2005.

\bibitem{Poschel:2001rc} J.~P\"oschel, A lecture on the classical KAM theorem, {\em Proc. Sympos. Pure Math.} {\bf 69}:707--732, 2001.

\bibitem{Rogers:1970wb} C.A.~Rogers, {\em Hausdorff measures}, Cambridge Univ. Press, Cambridge, 1970.

\bibitem{Schmidt:1964vn} W.M.~Schmidt, Metrical theorems on fractional parts of sequences, {\em Trans. Amer. Math. Soc.} {\bf 110}:493--518, 1964.

\bibitem{Yoccoz:1992rz} J.-C.~Yoccoz, An introduction to small divisors problems, in {\em From number theory to physics}, Springer-Verlag, 1992.

\end{thebibliography}
\end{document}